\documentclass[11pt,reqno]{amsart}
\usepackage[a4paper,margin=1.05in]{geometry}
\usepackage{amsmath,amssymb,amsthm,mathrsfs}
\usepackage{xcolor}
\usepackage{hyperref}

\hypersetup{
    colorlinks=true,
    linkcolor=blue,
    citecolor=blue,
    urlcolor=blue
}
\theoremstyle{plain}
\newtheorem{theorem}{Theorem}[section]
\newtheorem{proposition}[theorem]{Proposition}
\newtheorem{lemma}[theorem]{Lemma}
\newtheorem{corollary}[theorem]{Corollary}
\theoremstyle{definition}
\newtheorem{definition}[theorem]{Definition}
\newtheorem{remark}[theorem]{Remark}
\newtheorem{example}[theorem]{Example}
\usepackage{fancyhdr}
\newcommand{\R}{\mathbb{R}}
\newcommand{\N}{\mathbb{N}}
\newcommand{\dHs}{\dot H^{s}(\R^N)}
\newcommand{\Lam}{\Lambda_{N,s}}
\newcommand{\Psins}{\Psi_{N,s}}
\newcommand{\mubar}{\bar\mu_{N,s}}
\newcommand{\cstar}{2^{*}_{s}}

\begin{document}

\title[Fractional Brezis-Nirenberg with multipolar Hardy potentials]
{A fractional Brezis-Nirenberg problem with multipolar Hardy potentials}
\author{Debangana Mukherjee}
\address{Department of Mathematics, School of Interwoven Arts and Sciences (SIAS),
Krea University, Sri City, Andhra Pradesh, India}
\email{debangana.mukherjee@krea.edu.in}
\subjclass[2020]{35R11, 35B33, 35A15, 35J20}
\keywords{Fractional Laplacian; multipolar Hardy potential; critical Sobolev exponent;
Brezis-Nirenberg problem; concentration-compactness}
\maketitle

\begin{abstract}
We study a fractional Brezis--Nirenberg problem on a bounded domain with finitely many Hardy potentials centered at distinct points $a_1,\dots,a_k$, with masses $\mu_1,\dots,\mu_k\geq 0$. We assume
$\sum_{i=1}^k\mu_i<\Lambda_{N,s}$
and consider $N>4s$ and $0<\mu_{\max}<\overline{\mu}_{N,s}$. The interaction between the poles allows existence for $\lambda=0$ and for some negative values of $\lambda$. At a pole $a_j$, this interaction is described by
$B_j(\lambda,\mathbf a,\boldsymbol\mu)=\lambda+\sum_{i\neq j}\frac{\mu_i}{|a_i-a_j|^{2s}}.$
This leads to a threshold $\lambda_{\mathrm{geo}}\leq 0$. We prove that the critical Rayleigh quotient is attained for
$\lambda_{\mathrm{geo}}<\lambda<\lambda_1(\boldsymbol\mu,\mathbf a),$
and hence the problem admits a positive least-energy solution. If at least two masses are positive, then $\lambda_{\mathrm{geo}}<0$, so the result includes $\lambda=0$ and a nonempty interval of negative values of $\lambda$. We also prove a direct compactness estimate for weakly vanishing sequences by localizing the Gagliardo seminorm. The argument avoids both profile decomposition and the Caffarelli-Silvestre extension.
\end{abstract}
%\tableofcontents
\section{Introduction and main results}\label{sec:intro}

\subsection{The fractional multipolar Brezis--Nirenberg problem}\label{sec:problem}
On a bounded domain, a semilinear equation with critical Sobolev exponent need not
have a positive solution, and Brezis and Nirenberg showed that adding a linear term
$\lambda u$ restores existence for $\lambda$ in a suitable range. We study a
fractional version of this question in which the equation also involves several
inverse-power Hardy potentials, each singular at an interior point of the domain. Let $N\in\N$, $s\in(0,1)$ with $N>2s$, and let $\Omega\subset\R^N$ be a bounded connected open set with smooth boundary. Fix $k\ge 1$ distinct points
$a_1,\dots,a_k\in\Omega,\, a_i\neq a_j\ (i\neq j),\, d_j:=\min_{i\neq j}|a_i-a_j|\ \ (j=1,\dots,k),\, d:=\min_{j}d_j>0$. With the convention $d_1:=+\infty$ when $k=1$, so any upper-bound condition involving $d_1$ or $d$ is immediate in that case. The masses are $\mu_1,\dots,\mu_k\ge 0$. Write $\mathbf a=(a_1,\dots,a_k)$,
$\boldsymbol\mu=(\mu_1,\dots,\mu_k)$, $\mu_{\max}=\max_i\mu_i$, and $\cstar=\frac{2N}{N-2s}$. 
For $\lambda\in\R$ consider
\begin{equation}\label{eq:P}
\left\{
\begin{aligned}
(-\Delta)^s u-\sum_{i=1}^{k}\mu_i\,\frac{u}{|x-a_i|^{2s}}
&=\lambda u+|u|^{\cstar-2}u, && x\in\Omega,\\
u&=0, && x\in\R^N\setminus\Omega .
\end{aligned}
\right.
\tag{$P_{\lambda,\mathbf a}$}
\end{equation}
The associated quadratic form and energy functional are
\begin{equation}\label{eq:Q}
Q_{\boldsymbol\mu,\mathbf a}(u):=[u]_s^2-\sum_{i=1}^{k}\mu_i
\int_{\Omega}\frac{u^2}{|x-a_i|^{2s}}\,dx ,
\end{equation}
\begin{equation}\label{eq:J}
J_\lambda(u):=\tfrac12 Q_{\boldsymbol\mu,\mathbf a}(u)
-\tfrac{\lambda}{2}\int_\Omega u^2\,dx-\tfrac{1}{\cstar}\int_\Omega|u|^{\cstar}dx .
\end{equation}
Throughout we assume
\begin{equation}\label{eq:H}
\sum_{i=1}^{k}\mu_i<\Lam ,
\tag{H}
\end{equation}
where $\Lam$ is the sharp constant in the fractional Hardy inequality, recalled in
Theorem~\ref{thm:hardy} below. Problem \eqref{eq:P} is related to four relevant directions of previous work which we briefly
recall.

\emph{Brezis-Nirenberg.} For $s=1$ and $k=0$, Brezis and Nirenberg \cite{BN}
studied $-\Delta u=\lambda u+|u|^{2^*-2}u$ on a bounded domain and showed that a
positive solution exists for every $\lambda\in(0,\lambda_1)$ when $N\ge4$, while for
$N=3$ the admissible range of $\lambda$ is strictly smaller. The role of the dimension is determined by the integrability of the Aubin–Talenti profile and the dimension dependence is reflected below in the condition $\beta_+(\mu)>\frac N2$.

\emph{Fractional Brezis-Nirenberg.} The nonlocal counterpart with $k=0$ was obtained by Servadei and Valdinoci for $N\ge4s$ in \cite{SV-BN} and in the
low-dimensional case $2s<N<4s$ in \cite{SV-low}. Further existence and multiplicity
results including sign-changing solutions for fractional Brezis-Nirenberg problems
with an additional subcritical term are obtained in \cite{Muk}.

\emph{One pole.} For $s=1$ and $k=1$, Jannelli \cite{Ja} identified the role of
the Hardy parameter, extremals exist for every $0<\lambda<\lambda_1(\mu)$ whenever
$0\le\mu\le\frac{(N-2)^2}{4}-1$. The remaining range requires a different
argument followed from Ghoussoub and Robert \cite{GR16} through a notion of mass. Precise local
asymptotics at the singularity and unique continuation for fractional elliptic
equations with Hardy-type homogeneous potentials were established by Fall and Felli
\cite{FF}. In the fractional setting, attainability of the best constant in the
critical Hardy-Sobolev inequality and the doubly critical problem on $\R^N$ are
treated by Ghoussoub and Shakerian \cite{GS}. The counterpart of Jannelli's result is due to
Ghoussoub, Robert, Shakerian and Zhao \cite{GRSZ}, below a threshold $\gamma_{\mathrm{crit}}$ on the mass, least-energy solutions exist for every $\lambda\in(0,\lambda_1)$. But above this threshold, existence depends on the positivity of a fractional Hardy-Schr\"odinger mass associated with $\Omega$. Bounded-domain fractional problems with a Hardy potential were considered earlier by Barrios, Medina and Peral \cite{BMP} and fractional Brezis-Nirenberg systems with Hardy potentials in \cite{Shen}.

\emph{Several poles.} For $s=1$, Felli and Terracini \cite{FT} treat both the whole-space and the bounded-domain multipolar problems and give conditions on the masses and on the locations of the singularities for the associated Rayleigh quotient to be attained. Cao and Han \cite{CH} consider the corresponding critical problem
on bounded domains. In the fractional setting, positivity of multipolar
Schr\"odinger operators and its dependence on the configuration of the poles are
studied by Felli, Mukherjee and Ognibene \cite{FMO}, and a concentration-compactness
argument for multipolar Hardy potentials, in the extension formulation, is developed
in \cite{MMO}.

\emph{Main contributions}. The main contributions of the paper are twofold, and both arise from the interaction between the poles rather than from a direct extension of the single-pole problem.

The first is that the poles interact, and that the interaction has a sign. In every existence result quoted above the linear term must be strictly positive, $\lambda>0$. Here the Hardy potentials centred at the poles $a_i$, $i\neq j$, act near $a_j$ as an additional positive weight, and only the combination
\[
B_j(\lambda,\mathbf a,\boldsymbol\mu)
=\lambda+\sum_{i\neq j}\frac{\mu_i}{|a_i-a_j|^{2s}}
\]
has to be positive. This produces a threshold $\lambda_{\mathrm{geo}}\le0$, determined
by the geometry of the configuration alone, and the existence range is
$\lambda>\lambda_{\mathrm{geo}}$ rather than $\lambda>0$. As soon as two masses are
positive the inequality $\lambda_{\mathrm{geo}}<0$ is strict, so \eqref{eq:P} has a
positive least-energy solution for $\lambda=0$ and for an interval of negative
$\lambda$, whose lower endpoint $\lambda_{\mathrm{geo}}$ moves arbitrarily far to the
left as the poles approach one another. For $k=1$
one has $B_j=\lambda$ and $\lambda_{\mathrm{geo}}=0$, and the phenomenon disappears.

The second is the form of the compactness threshold and the way it is reached. We
prove that the compactness constant for \eqref{eq:P} is
$S_*=\min\{S,S_{\mu_1},\dots,S_{\mu_k}\}=S_{\mu_{\max}}$, by a localization of the
Gagliardo seminorm against a partition of unity adapted to the poles, in which the
error is controlled by the $L^2$ norm alone and therefore vanishes along a weakly
vanishing sequence. The argument uses neither a profile decomposition, nor a
classification of limit profiles, nor the Caffarelli-Silvestre extension, and each
pole contributes its own constant $S_{\mu_i}$ through a single application of the
one-pole inequality.

\subsection{Main results and the geometric threshold}\label{sec:mainresults}
We use the notation of Section~\ref{sec:notation}. $X$ denotes the natural variational space associated with the problem,
$[\,\cdot\,]_s$ the Gagliardo seminorm, and $S_\mu$ the one-pole Hardy-Sobolev
constant \eqref{eq:Smu}, with $S=S_0$. Let $\lambda_1(\boldsymbol\mu,\mathbf a)$
denote the principal eigenvalue of the multipolar quadratic form, defined in
Subsection~\ref{sec:eigen}, and let $\mubar$ be the explicit threshold of
Theorem~\ref{thm:mubar}, which satisfies $0<\mubar<\Lam$. Define
\begin{equation}\label{eq:lambdageo}
J_*:=\{j:\mu_j=\mu_{\max}\}, \,
\lambda_{\mathrm{geo}}
:=-\max_{j\in J_*}\ \sum_{i\neq j}\frac{\mu_i}{|a_i-a_j|^{2s}}\ \le\ 0 ,
\end{equation}
and, for $\lambda<\lambda_1(\boldsymbol\mu,\mathbf a)$,
\begin{equation}\label{eq:Slambda}
S_{\lambda,\boldsymbol\mu,\mathbf a}(\Omega)
:=\inf_{u\in X\setminus\{0\}}
\frac{Q_{\boldsymbol\mu,\mathbf a}(u)-\lambda\|u\|^2_{L^2(\Omega)}}
{\|u\|^2_{L^{\cstar}(\Omega)}},
\end{equation}
\begin{equation}\label{eq:Sstar}
S_*:=\min\{S,S_{\mu_1},\dots,S_{\mu_k}\},\,
c_*:=\frac sN\,S_*^{N/(2s)} .
\end{equation}

\begin{theorem}\label{thm:main}
Assume \eqref{eq:H}, $N>4s$ and $0<\mu_{\max}<\mubar$. Then for every
\[
\lambda_{\mathrm{geo}}<\lambda<\lambda_1(\boldsymbol\mu,\mathbf a)
\]
the quotient $S_{\lambda,\boldsymbol\mu,\mathbf a}(\Omega)$ is attained, and
\eqref{eq:P} admits a least-energy weak solution $u$ with $u>0$ a.e.\ in $\Omega$ and
$J_\lambda(u)<c_*$. 
If two or more coefficients $\mu_i$ are positive, then $\lambda_{\mathrm{geo}}<0$, so the existence range includes $\lambda=0$ and a nonempty interval of negative values of $\lambda$.
\end{theorem}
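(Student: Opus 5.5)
The argument follows the Brezis--Nirenberg scheme: a compactness threshold plus a test-function estimate. \textbf{Step 1 (compactness below $S_*$).} Take $\lambda<\lambda_1(\boldsymbol\mu,\mathbf a)$ and a minimizing sequence $(u_n)\subset X$ for $S_{\lambda,\boldsymbol\mu,\mathbf a}(\Omega)$ with $\|u_n\|_{L^{\cstar}}=1$. Since $\lambda<\lambda_1$ and \eqref{eq:H} holds, the form $Q_{\boldsymbol\mu,\mathbf a}-\lambda\|\cdot\|_{L^2}^2$ is coercive on $X$. Hence $(u_n)$ is bounded, and up to a subsequence $u_n\rightharpoonup u$ in $X$, $u_n\to u$ in $L^2(\Omega)$ and a.e. Write $v_n=u_n-u$. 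By Brezis--Lieb, and by weak convergence in the quadratic terms,
\[
1=\|u\|_{\cstar}^{\cstar}+\|v_n\|_{\cstar}^{\cstar}+o(1),\qquad
S_{\lambda}=Q(u)-\lambda\|u\|_2^2+Q(v_n)+o(1).
\]
Apply the compactness estimate for weakly vanishing sequences, which localizes the Gagliardo seminorm with a partition of unity adapted to the poles. It gives $Q_{\boldsymbol\mu,\mathbf a}(v_n)\ge S_*\|v_n\|_{\cstar}^2+o(1)$. Set $t=\|u\|_{\cstar}^{\cstar}$. Then
\[
S_\lambda\ge S_\lambda t^{2/\cstar}+S_*(1-t)^{2/\cstar}.
\]
If $S_\lambda<S_*$, concavity forces $t=1$, so $u$ is a minimizer. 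Replacing $u$ by $|u|$, which does not increase $[u]_s$, we may take $u\ge0$. The Lagrange multiplier gives a weak solution after scaling. A strong maximum principle for the fractional Laplacian, with the potential term nonnegative, yields $u>0$ a.e. The standard Nehari computation gives $J_\lambda(u)=\frac sN S_\lambda^{N/(2s)}<c_*$, and this value is the least energy. Note that $S_*=S_{\mu_{\max}}$ because $S_\mu$ decreases in $\mu$ and $S_0=S$.

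\textbf{Step 2 (strict inequality $S_\lambda<S_{\mu_{\max}}$).} This is the main obstacle. Fix $j\in J_*$ attaining the max in \eqref{eq:lambdageo}, so that $B_j:=B_j(\lambda,\mathbf a,\boldsymbol\mu)>0$ exactly when $\lambda>\lambda_{\mathrm{geo}}$. Let $U_\varepsilon$ be the rescaled extremal for $S_{\mu_j}$ centred at $a_j$. It behaves like $|x|^{-\beta_-}$ near $0$ and like $|x|^{-\beta_+}$ at infinity. Cut it off as $u_\varepsilon=\eta U_\varepsilon(\cdot-a_j)$, where $\eta$ is supported in $B_{\rho}(a_j)$ with $\rho<d_j/2$. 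Using the one-pole estimates for the truncated extremal, I expect
\[
[u_\varepsilon]_s^2-\mu_j\!\int\!\frac{u_\varepsilon^2}{|x-a_j|^{2s}}\le S_{\mu_j}^{N/2s}+O(\varepsilon^{2\beta_+-(N-2s)}),\qquad
\|u_\varepsilon\|_{\cstar}^{\cstar}=S_{\mu_j}^{N/2s}+O(\varepsilon^{\beta_+ \cstar-N}).
\]
The condition $\mu_{\max}<\mubar$ should be exactly what gives $\beta_+(\mu_j)>\frac N2$. Under that condition the $L^2$ mass concentrates, $\int u_\varepsilon^2\sim c\,\varepsilon^{2s}$, with a rate strictly larger than the tail errors since $N>4s$. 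For the other poles, on $\operatorname{supp}u_\varepsilon$ we have $|x-a_i|^{-2s}=|a_i-a_j|^{-2s}+O(\rho)$, and concentration makes the $O(\rho)$ error vanish at order $\varepsilon^{2s}$ times $o(1)$. Therefore
\[
\frac{Q(u_\varepsilon)-\lambda\|u_\varepsilon\|_2^2}{\|u_\varepsilon\|_{\cstar}^2}
\le S_{\mu_j}-c\,B_j\,\varepsilon^{2s}+o(\varepsilon^{2s})<S_{\mu_{\max}}
\]
for small $\varepsilon$, because $B_j>0$. The delicate points are checking the exponents of the error terms relative to $\varepsilon^{2s}$, and handling the $O(\rho)$ error in the other-pole weights. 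For the latter I would use the concentration $\int|x-a_j|u_\varepsilon^2=o(\varepsilon^{2s})$.

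\textbf{Step 3 (sign of $\lambda_{\mathrm{geo}}$).} If two masses are positive, then for $j\in J_*$ we have $\mu_j=\mu_{\max}>0$, and some $i\ne j$ has $\mu_i>0$. So the sum in \eqref{eq:lambdageo} is positive and $\lambda_{\mathrm{geo}}<0$. Also $\lambda_1(\boldsymbol\mu,\mathbf a)>0$ by \eqref{eq:H} and the Hardy inequality. Hence the interval $(\lambda_{\mathrm{geo}},\lambda_1)$ contains $0$ and a nonempty set of negative values.
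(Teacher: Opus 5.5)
Your proposal is correct and follows the paper's proof essentially step for step. Step~1 is the direct minimization of Theorem~\ref{thm:existence}: the Brezis--Lieb splitting, the localization estimate of Theorem~\ref{thm:star}, and $t^{2/\cstar}+(1-t)^{2/\cstar}\ge 1$. Step~2 is the truncated-extremal expansion of Theorem~\ref{thm:expansion} and Corollary~\ref{cor:strict}. The numerator bound you only "expect" is exactly what the cutoff identity of Lemma~\ref{lem:identity} together with Proposition~\ref{prop:Eeps} proves, and your handling of the other poles via $\int|x-a_j|u_\varepsilon^2=o(\varepsilon^{2s})$ is Theorem~\ref{thm:order}.

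Two small points to tidy up:
\begin{itemize}
\item Also require $\rho<\operatorname{dist}(a_j,\partial\Omega)$, so that $u_\varepsilon\in X$. This does not follow from $\rho<d_j/2$; for instance, when $k=1$ one has $d_1=+\infty$.
\item For $\lambda<0$, apply the strong minimum principle with zeroth-order coefficient $c=\min\{\lambda,0\}\le 0$. The term $\lambda u$ then has the wrong sign, so saying that the potential terms are nonnegative is not enough.
\end{itemize}
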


\begin{example}[two poles]\label{ex:twopoles}
Let $k=2$ and $N>4s$, assume
\[
0<\mu_2<\mu_1<\mubar,\, \mu_1+\mu_2<\Lam ,
\]
and set $d:=|a_1-a_2|$. Then $\mu_1=\mu_{\max}$, so $J_*=\{1\}$ and
\[
\lambda_{\mathrm{geo}}=-\frac{\mu_2}{d^{2s}} .
\]
Theorem~\ref{thm:main} gives a least-energy solution, positive a.e.\ in $\Omega$, for
every $-\mu_2d^{-2s}<\lambda<\lambda_1(\boldsymbol\mu,\mathbf a)$. This range contains
$\lambda=0$ and the nonempty interval $\big(-\mu_2d^{-2s},0\big)$ so the second pole
admits negative values of $\lambda$. Moreover, the lower threshold $-\mu_2d^{-2s}$
decreases as the distance $d$ between the poles decreases, so the admissible interval
below $0$ becomes larger as the two poles move closer together. We make no claim about
the full interval, whose upper endpoint $\lambda_1(\boldsymbol\mu,\mathbf a)$ also
depends on the configuration of the poles.
\end{example}

The hypothesis $\mu_{\max}>0$ excludes only $\boldsymbol\mu=0$, in which case
\eqref{eq:P} is the fractional Brezis-Nirenberg problem without Hardy terms treated
for $N\ge4s$ in \cite{SV-BN} and for $2s<N<4s$ in \cite{SV-low}. There
$\lambda_{\mathrm{geo}}=0$ and the range in question is $0<\lambda<\lambda_1$. The
strict inequality $S_{\lambda,\boldsymbol\mu,\mathbf a}(\Omega)<S_*$ used in Theorem~\ref{thm:main} follows from an expansion of the Rayleigh quotient
along a
concentrating family truncated at a dominant pole $a_j$. The effect of the linear term and the other poles is given by the coefficient
\begin{equation}\label{eq:Bjintro}
B_j(\lambda,\mathbf a,\boldsymbol\mu)
=\lambda+\sum_{i\neq j}\frac{\mu_i}{|a_i-a_j|^{2s}} ,
\end{equation}
and $B_j>0$ for some $j\in J_*$ exactly when $\lambda>\lambda_{\mathrm{geo}}$
(Theorem~\ref{thm:expansion} and Corollary~\ref{cor:strict}).

To deduce attainment from this strict inequality, we establish the following
compactness result, the main technical result of the paper.

\begin{theorem}\label{thm:star}
Assume \eqref{eq:H} and let $(w_n)\subset X$ satisfy $w_n\rightharpoonup0$ in $X$.
Then
\begin{equation}\label{eq:star}
\liminf_{n\to\infty}\Big(Q_{\boldsymbol\mu,\mathbf a}(w_n)
-S_*\,\|w_n\|^2_{L^{\cstar}(\Omega)}\Big)\ \ge\ 0 .
\end{equation}
\end{theorem}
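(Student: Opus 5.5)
We localize with a partition of unity adapted to the poles. Fix $0<r<d/4$ with $B_{2r}(a_i)\subset\Omega$, and choose smooth cutoffs $\varphi_1,\dots,\varphi_k,\varphi_0$ with $\varphi_i\equiv1$ on $B_r(a_i)$ and $\operatorname{supp}\varphi_i\subset B_{2r}(a_i)$. The $\varphi_i$ have disjoint supports, and we set $\varphi_0$ so that $\sum_{i=0}^k\varphi_i^2=1$ (take $\psi_i$ smooth with $\sum\psi_i^2\equiv1$ via the usual square-root normalization). Since $w_n\rightharpoonup0$ in $X$ and $\Omega$ is bounded, the compact embedding gives $w_n\to0$ in $L^2(\Omega)$. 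Moreover $\int_{\Omega\setminus B_r(a_i)} w_n^2|x-a_i|^{-2s}\,dx\to0$ for each $i$.

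\textbf{Step 1: localization of the seminorm.} We aim for
\[
[w_n]_s^2\ \ge\ \sum_{i=0}^k[\varphi_iw_n]_s^2-C\|w_n\|_{L^2}^2 .
\]
Expanding $\varphi_i(x)w(x)-\varphi_i(y)w(y)=\varphi_i(x)(w(x)-w(y))+w(y)(\varphi_i(x)-\varphi_i(y))$, squaring and summing over $i$ with $\sum\varphi_i^2=1$ gives
\[
\sum_i[\varphi_iw]_s^2=[w]_s^2+2\sum_i\iint\varphi_i(x)(w(x)-w(y))w(y)\frac{\varphi_i(x)-\varphi_i(y)}{|x-y|^{N+2s}}+\sum_i\iint w(y)^2\frac{(\varphi_i(x)-\varphi_i(y))^2}{|x-y|^{N+2s}}.
\]
The last term is bounded by $C\|w\|_2^2$, since $\int|\varphi_i(x)-\varphi_i(y)|^2|x-y|^{-N-2s}dx\le C$ uniformly in $y$, using Lipschitz bounds together with boundedness of $\varphi_i$. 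The cross term is the main obstacle. Cauchy–Schwarz only bounds it by $C[w]_s\|w\|_2$. That estimate is still enough: $[w_n]_s$ is bounded, so the error is $O(\|w_n\|_2)\to0$. If one wants an error depending on $\|w\|_2^2$ alone, symmetrize using $\sum_i\varphi_i(x)(\varphi_i(x)-\varphi_i(y))=\tfrac12\sum_i(\varphi_i(x)-\varphi_i(y))^2$; the cross term then reduces to a form $\iint w(x)w(y)K(x,y)$ with $K=\tfrac12\sum(\varphi_i(x)-\varphi_i(y))^2|x-y|^{-N-2s}$, which is a bounded Schur kernel. Either way the total error tends to $0$.

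\textbf{Step 2: distributing the Hardy terms.} Since $\varphi_i^2=1$ on $B_r(a_i)$, and $\varphi_0^2$ together with $\varphi_j^2$ for $j\ne i$ vanish there,
\[
\mu_i\int\frac{w_n^2}{|x-a_i|^{2s}}=\mu_i\int\frac{(\varphi_iw_n)^2}{|x-a_i|^{2s}}+o(1).
\]
Hence $Q_{\boldsymbol\mu,\mathbf a}(w_n)\ge[\varphi_0w_n]_s^2+\sum_{i=1}^k\big([\varphi_iw_n]_s^2-\mu_i\int(\varphi_iw_n)^2|x-a_i|^{-2s}\big)+o(1)$. The one-pole inequality \eqref{eq:Smu} at $a_i$ (translation invariant) and the Sobolev inequality then give
\[
Q_{\boldsymbol\mu,\mathbf a}(w_n)\ge S\|\varphi_0w_n\|_{\cstar}^2+\sum_{i\ge1}S_{\mu_i}\|\varphi_iw_n\|_{\cstar}^2+o(1)\ge S_*\sum_{i=0}^k\|\varphi_i^2w_n^2\|_{L^{\cstar/2}}+o(1).
\]

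\textbf{Step 3: recombination.} Since $\cstar/2>1$, Minkowski's inequality in $L^{\cstar/2}$ gives
\[
\sum_i\|\varphi_i^2w_n^2\|_{L^{\cstar/2}}\ge\Big\|\sum_i\varphi_i^2w_n^2\Big\|_{L^{\cstar/2}}=\|w_n\|_{\cstar}^2.
\]
This yields \eqref{eq:star} after taking the $\liminf$. The only delicate point is Step 1, namely that the commutator errors are controlled by the $L^2$ norm, possibly multiplied by the bounded seminorm, and therefore vanish along the sequence.
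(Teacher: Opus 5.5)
Your proof is correct, and it departs from the paper's in the recombination step.

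\textbf{Steps 1--2 are the paper's.} They coincide with the paper's Steps 1--3. After your symmetrization, the total localization error is exactly $\frac{C_{N,s}}{2}\iint w(x)w(y)\sum_i(\varphi_i(x)-\varphi_i(y))^2|x-y|^{-N-2s}\,dx\,dy=\sum_i R_{\varphi_i}(w)$, which is the content of Lemmas~\ref{lem:identity} and~\ref{lem:IMS}. The Hardy terms are distributed in the same way, and the one-pole and Sobolev inequalities are applied to each piece just as in the paper. Only Lipschitz continuity of the cutoffs is used; the paper's $\sin/\cos$ construction gives this directly if you prefer not to invoke the square-root normalization.

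\textbf{Step 3 is where you differ.} The paper passes to the masses $A_i^{(n)}=\int\theta_i^{\cstar}|w_n|^{\cstar}$ and uses subadditivity of $t\mapsto t^{2/\cstar}$. Since $\sum_i\theta_i^{\cstar}<1$ on the transition annuli $K_\delta$, it loses mass there. It must recover this by taking a weak-$*$ limit measure $\varrho$ of $|w_n|^{\cstar}$, bounding the loss by $\tau(\delta)=\int\phi_\delta\,d\varrho$, and letting $\delta\to0$.

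Your triangle inequality in $L^{\cstar/2}$ rests on $\|\varphi_i w\|^2_{L^{\cstar}}=\|\varphi_i^2w^2\|_{L^{\cstar/2}}$ and on the exact identity $\sum_i\varphi_i^2w^2=w^2$. So nothing is lost on the annuli. You need no measures, no subsequence extraction and no limit in the partition scale; one fixed partition suffices.

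Your argument also proves more than the statement. With the symmetrized Step 1 it gives, for every $w\in X$,
\[
Q_{\boldsymbol\mu,\mathbf a}(w)\ \ge\ S_*\|w\|^2_{L^{\cstar}(\Omega)}-C_r\|w\|^2_{L^2(\Omega)} ,
\]
with $C_r$ depending only on the partition, the masses, $N$ and $s$. Then \eqref{eq:star} follows at once from the compact embedding $X\hookrightarrow L^2(\Omega)$. The paper's route yields only the asymptotic statement, with an error $\tau(\delta)$ that depends on the sequence. Its measure-theoretic step is the price of working with $\theta_i^{\cstar}$-weighted masses rather than $L^{\cstar/2}$-norms of $\theta_i^2w^2$.
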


In Subsection~\ref{sec:dicho} we prove Theorem~\ref{thm:star} by localizing the
Gagliardo seminorm with a suitable partition of unity. The resulting additional terms
are controlled by the $L^2$ norm. We neither use a profile decomposition nor the
Caffarelli-Silvestre extension. It follows that the critical norm associated with a
Palais-Smale sequence is either $0$ or bounded below by $S_*^{N/(2s)}$. This gives
relative compactness of such sequences below $c_*$, and Theorem~\ref{thm:main} then
follows by direct minimization.

The paper is organised as follows. Section~\ref{sec:notation} fixes the functional
setting and collects the Hardy and Sobolev inequalities, the single-pole facts and the
strong minimum principle used throughout. Section~\ref{sec:framework} develops the
multipolar variational framework, the multipolar quadratic form is the square of a
Hilbert norm on $X$ equivalent to the Gagliardo seminorm
(Proposition~\ref{prop:form}). The quantity $\lambda_1(\boldsymbol\mu,\mathbf a)$ is
positive, attained and simple with eigenfunctions of constant sign
(Theorem~\ref{thm:A}). The map $\mu\mapsto S_\mu$ is strictly decreasing, so that
the compactness constant is $S_*=S_{\mu_{\max}}$ (Proposition~\ref{prop:Smu}).
Section~\ref{sec:asymptotics} analyses the homogeneity function $\Psins$, gives an
explicit formula for $\mubar$ and derives the scaling, tail and weighted
$L^2$-asymptotics for the single-pole extremals.
Section~\ref{sec:concentrating} establishes the expansion used in
\eqref{eq:Bjintro} and the resulting geometric threshold.
Section~\ref{sec:ps} proves the compactness of Palais-Smale sequences below $c_*$
from the dichotomy in \eqref{eq:star}, and Section~\ref{sec:existence} deduces
attainment and Theorem~\ref{thm:main}.

\smallskip
\subsection{Relation to previous work and outline of the proof}\label{sec:outline}
The two conclusions above should be compared with the following. Every existence result for \eqref{eq:P} with $k\le1$ requires $\lambda>0$. This is the
case in \cite{BN} for $s=1$, in \cite{SV-BN,SV-low} for the fractional problem without
Hardy terms, and in \cite{Ja,GRSZ} for a single pole, where the relevant coefficient is
$B_j=\lambda$ and no interaction is available. The threshold
$\lambda_{\mathrm{geo}}<0$ of Theorem~\ref{thm:main} therefore has no counterpart in
that literature. In the local multipolar problem \cite{FT,CH} the interaction between
poles is present, but the results there are conditions for attainment of a Rayleigh
quotient rather than a sign threshold on the linear term. Remark~\ref{rem:GRSZsection6}
isolates the mechanism, the linear term and the Hardy terms centred at the remaining
poles combine near $a_j$ into a single weight whose value at $a_j$ is $B_j$.

The arguments leading to these results are largely based on the single-pole theory. We indicate the connections with the existing results whenever relevant. The threshold in Theorem~\ref{thm:mubar} coincides with $\gamma_{\mathrm{crit}}(2s)$ introduced in \cite{GRSZ}. An explicit formula for this threshold is given in Remark~\ref{rem:gammacrit}. When $k=1$, the arguments of Subsection~\ref{sec:trunc} recover the cutoff expansion of \cite[Section~6]{GRSZ}.

For Theorem~\ref{thm:star}, we use a direct localization of the Gagliardo seminorm. This yields the multipolar constant
$$
S_*=\min{S,S_{\mu_1},\dots,S_{\mu_k}}
$$
on a bounded domain without using a profile decomposition or the Caffarelli--Silvestre extension. Localization arguments of this type as well as fractional versions of the underlying identity are known. In contrast, \cite{MMO} treats the whole-space multipolar problem through concentration-compactness in the extension formulation.

\section{Functional setting and preliminary results}\label{sec:notation}

This section fixes the function spaces and the bilinear forms used below, and
collects the standard results from the literature on which the rest of the paper
relies. These are stated in the notation and normalisation adopted here, proofs and
further details can be found in the cited references.

\subsection{Function spaces and variational formulation}\label{sec:spaces}
We begin with the function spaces and the two bilinear forms used below.
For a measurable function $u:\R^N\to\R$, set
\[
[u]_s^2:=\frac{C_{N,s}}{2}\iint_{\R^{2N}}\frac{|u(x)-u(y)|^2}{|x-y|^{N+2s}}\,dx\,dy
=\big\|(-\Delta)^{s/2}u\big\|_{L^2(\R^N)}^2 ,
\]
with the normalising constant $C_{N,s}$ of \cite{DNPV}. Let
$\dHs$ be the completion of $C^\infty_c(\R^N)$ with respect to $[\,\cdot\,]_s$, and
\[
X:=\big\{u\in \dHs\ :\ u=0 \text{ a.e. in }\R^N\setminus\Omega\big\},
\]
a Hilbert space with scalar product $\langle u,v\rangle_X=\frac{C_{N,s}}{2}\iint\frac{(u(x)-u(y))(v(x)-v(y))}{|x-y|^{N+2s}}\,dx\,dy$.
More generally we write, for $u,v\in\dHs$,
\[
\mathcal E(u,v):=\frac{C_{N,s}}{2}\iint_{\R^{2N}}
\frac{\big(u(x)-u(y)\big)\big(v(x)-v(y)\big)}{|x-y|^{N+2s}}\,dx\,dy ,
\]
so that $\mathcal E(u,u)=[u]_s^2$. The double integral converges absolutely, with
$|\mathcal E(u,v)|\le[u]_s[v]_s$ by the Cauchy-Schwarz inequality on
$\R^{2N}$ and $\mathcal E=\langle\cdot,\cdot\rangle_X$ on $X$. Define also, for $u,v\in X$,
\begin{equation}\label{eq:B}
\mathcal B_{\boldsymbol\mu,\mathbf a}(u,v)
:=\mathcal E(u,v)-\sum_{i=1}^{k}\mu_i\int_\Omega\frac{uv}{|x-a_i|^{2s}}\,dx ,
\end{equation}
a symmetric bilinear form with
$\mathcal B_{\boldsymbol\mu,\mathbf a}(u,u)=Q_{\boldsymbol\mu,\mathbf a}(u)$ by
\eqref{eq:Q}. The integrals converge for all $u,v\in X$ by
Proposition~\ref{prop:form}.
Since $\Omega$ is bounded and smooth, $X\hookrightarrow L^q(\Omega)$ continuously for
$q\in[1,\cstar]$ and compactly for $q\in[1,\cstar)$ \cite{DNPV,SV-BN}.

We say that $u\in X$ is a weak solution of \eqref{eq:P} if
\[
\langle u,\varphi\rangle_X-\sum_i\mu_i\int_\Omega\frac{u\varphi}{|x-a_i|^{2s}}
=\lambda\int_\Omega u\varphi+\int_\Omega|u|^{\cstar-2}u\varphi \, \text{ for all }\varphi\in X .
\]
All integrals here are finite and this is part of Proposition~\ref{prop:form}.

For $0\le\mu<\Lam$ put
\begin{equation}\label{eq:Smu}
S_\mu:=\inf_{u\in\dHs\setminus\{0\}}
\frac{[u]_s^2-\mu\displaystyle\int_{\R^N}\frac{u^2}{|x|^{2s}}dx}
{\Big(\displaystyle\int_{\R^N}|u|^{\cstar}dx\Big)^{2/\cstar}}, \, S:=S_0 .
\end{equation}

\subsection{Hardy and Sobolev inequalities}\label{sec:hardysob}

The following inequalities define the constants $\Lam$ and $S$ used below.

\begin{theorem}[Sharp fractional Hardy inequality \cite{Herbst,FLS,FS}]\label{thm:hardy}
For every $u\in\dHs$,
\[
\Lam\int_{\R^N}\frac{u^2}{|x|^{2s}}\,dx\le [u]_s^2, \,
\Lam=2^{2s}\,\frac{\Gamma\!\left(\frac{N+2s}{4}\right)^2}
{\Gamma\!\left(\frac{N-2s}{4}\right)^2},
\]
Moreover, $\Lam$ is optimal and is not attained.
\end{theorem}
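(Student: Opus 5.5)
The plan is to follow the ground state representation of Frank, Lieb and Seiringer \cite{FLS}. It yields the inequality, the optimality of $\Lam$ and the non-attainment from a single identity. It suffices to work with $u\in C_c^\infty(\R^N)$. The general case $u\in\dHs$ then follows by density: both sides are continuous in $[\,\cdot\,]_s$, the left side by Fatou's lemma along an a.e.\ convergent subsequence.

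\emph{Step 1 (the power function).} For $0<\alpha<N-2s$ I will compute, in the distributional or pointwise principal-value sense on $\R^N\setminus\{0\}$,
\[
(-\Delta)^s|x|^{-\alpha}=\Phi(\alpha)\,|x|^{-\alpha-2s},\qquad
\Phi(\alpha)=2^{2s}\frac{\Gamma\!\big(\frac{\alpha+2s}{2}\big)\Gamma\!\big(\frac{N-\alpha}{2}\big)}{\Gamma\!\big(\frac{N-\alpha-2s}{2}\big)\Gamma\!\big(\frac{\alpha}{2}\big)}.
\]
This comes from the Fourier transform of homogeneous distributions, $\widehat{|x|^{-\alpha}}=c_{N,\alpha}|\xi|^{\alpha-N}$, together with the explicit constants $c_{N,\alpha}$. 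The function $\Phi$ is symmetric about $\alpha_0=\frac{N-2s}{2}$ and is maximal there. Moreover $\Phi(\alpha_0)=\Lam$, which is exactly the Gamma-quotient in the statement.

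\emph{Step 2 (ground state representation).} Set $\omega(x)=|x|^{-\alpha_0}$ and write $u=\omega v$ with $v\in C_c^\infty(\R^N\setminus\{0\})$ first. I will prove the identity
\[
[u]_s^2-\Lam\int_{\R^N}\frac{u^2}{|x|^{2s}}\,dx=\frac{C_{N,s}}{2}\iint_{\R^{2N}}\frac{|v(x)-v(y)|^2}{|x-y|^{N+2s}}\,\omega(x)\omega(y)\,dx\,dy .
\]
The key algebraic fact is
\[
|\omega(x)v(x)-\omega(y)v(y)|^2=\omega(x)\omega(y)|v(x)-v(y)|^2+\big(\omega(x)-\omega(y)\big)\big(\omega(x)v(x)^2-\omega(y)v(y)^2\big).
\]
Integrating the last term against the kernel and using Step~1 tested with $v^2\omega$ gives $\Lam\int u^2|x|^{-2s}$. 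To justify this I will regularize, either by excising $\{|x-y|<\varepsilon\}$ or by replacing $\alpha_0$ by nearby $\alpha$ and letting $\alpha\to\alpha_0$. I expect this to be the main technical obstacle: one must control the singularity of $\omega$ at the origin and the principal value. An approximation argument then extends the identity, or at least the inequality ``$\ge$'', to all $u\in C_c^\infty(\R^N)$, because $C_c^\infty(\R^N\setminus\{0\})$ is dense in $\dHs$ when $N>2s$ (points have zero $s$-capacity). Nonnegativity of the right-hand side gives the Hardy inequality.

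\emph{Step 3 (optimality).} I will take $u_\varepsilon=\omega\,v_\varepsilon$, where $v_\varepsilon$ is a logarithmic-type cutoff equal to $1$ on $\varepsilon<|x|<1$, with tails that decay slowly near $0$ and near $\infty$. Alternatively one can use $|x|^{-\alpha}$ with $\alpha\downarrow\alpha_0$, truncated. For these test functions $\int u_\varepsilon^2|x|^{-2s}\sim|\log\varepsilon|\to\infty$, while the weighted remainder on the right of Step~2 stays bounded. Hence the Rayleigh quotient tends to $\Lam$, which proves optimality.

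\emph{Step 4 (non-attainment).} Suppose equality holds for some $u\in\dHs\setminus\{0\}$. By Step~2 the weighted Gagliardo energy of $v=u/\omega$ vanishes, so $v$ is a.e.\ constant and $u=c|x|^{-(N-2s)/2}$. This function is not in $L^{\cstar}(\R^N)$, since $\cstar\alpha_0=N$, so it is not in $\dHs$ by the Sobolev embedding. Thus $c=0$, a contradiction. Applying Step~2 for general $u\in\dHs$ requires that the identity persist in the limit, at least as the inequality ``$\ge$'' by Fatou; this is enough for the argument. As an independent check on the constant, one can use Herbst's Fourier-side argument \cite{Herbst}: $\Lam^{-1}$ is the operator norm of $|x|^{-s}(-\Delta)^{-s/2}$ on $L^2$, and a Stein--Weiss/Mellin computation gives the same Gamma-quotient.
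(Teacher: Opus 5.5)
The paper does not prove this theorem; it quotes it from \cite{Herbst,FLS,FS}, and your proposal is a correct reconstruction of the Frank--Lieb--Seiringer ground state representation from \cite{FLS}, one of the cited sources, so it is essentially the intended route. Your treatment of non-attainment is also sound: approximate $u\in\dHs$ by $u_n\in C_c^\infty(\R^N\setminus\{0\})$, use the Hardy inequality to get convergence of the weighted $L^2$ terms, and use Fatou to keep the remainder as an inequality ``$\ge$''; equality then forces $u=c|x|^{-(N-2s)/2}\notin L^{\cstar}$, hence $c=0$.
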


\begin{theorem}[Sharp fractional Sobolev inequality, \cite{CT,Lieb}]\label{thm:sobolev}
There is a sharp constant $S>0$ such that
\[
S\,\|u\|_{L^{\cstar}(\R^N)}^2\ \le\ [u]_s^2 ,\, u\in\dHs .
\]
Furthermore, equality is attained by the fractional Aubin-Talenti functions
$x\mapsto c\,(\varepsilon^2+|x-x_0|^2)^{-(N-2s)/2}$.
\end{theorem}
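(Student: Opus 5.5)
The statement is a cited classical result (sharp fractional Sobolev inequality with Aubin–Talenti extremals). My plan is to reproduce its standard proof via duality with the Hardy–Littlewood–Sobolev inequality and Lieb's sharp HLS theorem.

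\textbf{Duality with HLS.} On the Fourier side, $[u]_s^2=\|(-\Delta)^{s/2}u\|_{L^2}^2$. I write $u=I_{2s}f$ with $f=(-\Delta)^s u$, where $I_{2s}f=c_{N,s}\,|x|^{-(N-2s)}*f$ is the Riesz potential. Then
\[
[u]_s^2=\int f\,I_{2s}f\,dx
\]
is an HLS bilinear form with exponent $\lambda=N-2s$. The conjugate exponent of $2^*_s$ is $p=\frac{2N}{N+2s}$.

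By the $L^2$ duality $\|u\|_{L^{2^*_s}}=\sup\{\int u g:\|g\|_{L^p}\le1\}$, together with Cauchy–Schwarz for the positive form $\langle g,I_{2s}g\rangle$, we get
\[
\Big(\int ug\Big)^2=\langle (-\Delta)^{s/2}u,(-\Delta)^{-s/2}g\rangle^2\le [u]_s^2\,\langle g,I_{2s}g\rangle .
\]
Hence the sharp Sobolev inequality is equivalent to the sharp diagonal HLS inequality
\[
\iint \frac{g(x)g(y)}{|x-y|^{N-2s}}\le C_{HLS}\|g\|_p^2 .
\]
The two constants correspond, and extremals correspond via $u=I_{2s}g$.

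\textbf{Sharp HLS (the main step).} Following Lieb, I would first show that a maximiser exists. By Riesz rearrangement it suffices to consider symmetric decreasing $g$. To obtain compactness modulo dilations, I would normalise the maximising sequence, for instance by fixing the level where $g^*$ crosses $1$, and then use Helly selection together with the strict rearrangement inequality to exclude vanishing and dichotomy.

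Next I would identify the maximiser. The HLS functional in the diagonal case is invariant under the conformal group, acting through stereographic projection to $S^N$ with the appropriate Jacobian weight. A maximiser $g$ must be symmetric decreasing about some point after every conformal transformation. The competing-symmetries argument of Carlen–Loss, alternating rearrangement with a conformal rotation, then forces $g$ to be a constant on $S^N$. Pulled back to $\R^N$, this gives
\[
g=c\,(\varepsilon^2+|x-x_0|^2)^{-(N+2s)/2}.
\]

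\textbf{Back to Sobolev.} Applying $I_{2s}$ to this $g$ yields $u=c'(\varepsilon^2+|x-x_0|^2)^{-(N-2s)/2}$. The Riesz potential of this profile can be computed explicitly, either on the sphere or via the known formula $(-\Delta)^s(1+|x|^2)^{-(N-2s)/2}=\kappa\,(1+|x|^2)^{-(N+2s)/2}$. This shows that equality holds for the stated Aubin–Talenti functions, with $S$ given by the corresponding constant.

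\textbf{Main obstacle.} The hard part is the classification of the HLS maximisers. Existence via rearrangement is comparatively standard. In the paper itself I would simply cite \cite{Lieb,CT} rather than reproduce this argument.
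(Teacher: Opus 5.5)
Your outline is correct and matches what the paper does: the paper gives no proof and simply cites Lieb's sharp HLS inequality together with Cotsiolis--Tavoularis, whose derivation is exactly the HLS duality you describe. One point: the statement only asserts that the Aubin--Talenti functions attain equality, not that they are the only extremals. The Carlen--Loss competing-symmetries argument already shows directly that the bubble is a maximiser, with no a priori existence or classification step, so your ``main obstacle'' (classifying all maximisers) goes beyond what is needed here.
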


\subsection{The single-pole problem}\label{sec:singlepole}

The following statements collect the basic properties of the single-pole problem,
including the relevant homogeneity exponents and the extremals for $S_\mu$.

\begin{lemma}[Fractional Laplacian of homogeneous functions \cite{Herbst,FLS,GRSZ}]
\label{lem:homogeneous}
For $\gamma\in(0,N-2s)$ the function $|x|^{-\gamma}$ satisfies
$(-\Delta)^s|x|^{-\gamma}=\Psins(\gamma)|x|^{-\gamma-2s}$ in $\R^N\setminus\{0\}$,
where
\begin{equation}\label{eq:psi}
\Psins(\gamma)=2^{2s}\,
\frac{\Gamma\!\left(\frac{\gamma+2s}{2}\right)\Gamma\!\left(\frac{N-\gamma}{2}\right)}
{\Gamma\!\left(\frac{\gamma}{2}\right)\Gamma\!\left(\frac{N-2s-\gamma}{2}\right)} .
\end{equation}
\end{lemma}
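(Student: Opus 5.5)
We compute $(-\Delta)^s|x|^{-\gamma}$ in Fourier variables and, since both sides are radial and homogeneous, reduce the identity to the value of a single constant. For $0<\gamma<N$ the function $|x|^{-\gamma}$ is locally integrable and defines a tempered distribution. Its Fourier transform is again a homogeneous power. With the normalisation $\hat f(\xi)=\int f(x)e^{-ix\cdot\xi}dx$,
\[
\widehat{|x|^{-\gamma}}=c_{N,\gamma}|\xi|^{\gamma-N},\quad c_{N,\gamma}=\pi^{N/2}2^{N-\gamma}\frac{\Gamma(\frac{N-\gamma}{2})}{\Gamma(\frac{\gamma}{2})}.
\]
This is proved by writing $|x|^{-\gamma}=\Gamma(\gamma/2)^{-1}\int_0^\infty t^{\gamma/2-1}e^{-t|x|^2}dt$, transforming the Gaussians, and integrating in $t$.

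Since $(-\Delta)^s$ acts as multiplication by $|\xi|^{2s}$, formally
\[
\mathcal F\big((-\Delta)^s|x|^{-\gamma}\big)=c_{N,\gamma}|\xi|^{\gamma+2s-N}.
\]
The restriction $\gamma<N-2s$ is exactly what makes $|\xi|^{\gamma+2s-N}$ locally integrable, and hence a tempered distribution. It is then the transform of $c_{N,\gamma}c_{N,\gamma+2s}^{-1}|x|^{-\gamma-2s}$, by the same formula with $\gamma$ replaced by $\gamma+2s\in(2s,N)$. The constant is
\[
\frac{c_{N,\gamma}}{c_{N,\gamma+2s}}=2^{2s}\frac{\Gamma(\frac{N-\gamma}{2})\Gamma(\frac{\gamma+2s}{2})}{\Gamma(\frac{\gamma}{2})\Gamma(\frac{N-\gamma-2s}{2})},
\]
which is precisely $\Psi_{N,s}(\gamma)$ in \eqref{eq:psi}.

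The main obstacle is justifying the formal step, because $|x|^{-\gamma}$ is not in $\dot H^s$ or in any $L^2$-type class where $(-\Delta)^s$ is a bounded multiplier. I would handle this as follows.
\begin{itemize}
\item Interpret $(-\Delta)^s|x|^{-\gamma}$ on $\mathbb R^N\setminus\{0\}$ as the pointwise principal-value integral $C_{N,s}\,\mathrm{P.V.}\int\frac{u(x)-u(y)}{|x-y|^{N+2s}}dy$. This converges for $x\neq0$, since $u$ is smooth near $x$, has an integrable singularity at $0$ because $\gamma<N$, and the tail integrand is $O(|y|^{-N-2s})$ because $\gamma>0$.
\item Check that this pointwise function coincides with the distributional one. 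Pair it with test functions $\varphi\in C_c^\infty(\mathbb R^N\setminus\{0\})$ and use the symmetric form $\int u\,(-\Delta)^s\varphi$. This is legitimate because $(-\Delta)^s\varphi=O(|x|^{-N-2s})$ at infinity, so $u\,(-\Delta)^s\varphi\in L^1$. Then use Parseval for tempered distributions against the Schwartz-type decay of $\widehat{\varphi}$.
\end{itemize}
Alternatively, one can bypass Fourier analysis. Pointwise, rotation and scaling invariance of the singular integral give $(-\Delta)^s|x|^{-\gamma}=C|x|^{-\gamma-2s}$ directly. The constant $C$ can then be identified by testing against a Gaussian and using the Mellin-type Gaussian integrals above.

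Either route reduces everything to the Gamma-function bookkeeping of the two Fourier constants.
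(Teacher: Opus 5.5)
The paper does not prove this lemma; it quotes it from \cite{Herbst,FLS,GRSZ}. Your Fourier-multiplier computation is essentially the standard derivation behind those references, and it is correct. The subordination formula gives $c_{N,\gamma}=\pi^{N/2}2^{N-\gamma}\Gamma(\frac{N-\gamma}{2})/\Gamma(\frac{\gamma}{2})$, and $c_{N,\gamma}/c_{N,\gamma+2s}$ is exactly $\Psins(\gamma)$.

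Your alternative route (rotation and scaling invariance of the principal-value integral, then one test function to fix the constant) also works, with one extra remark. A Gaussian is not supported away from $0$, so you must first know that the distribution $\varphi\mapsto\int|x|^{-\gamma}(-\Delta)^s\varphi\,dx$ on $\R^N$ equals $C|x|^{-\gamma-2s}$ globally. This follows because that distribution is homogeneous of degree $-\gamma-2s>-N$, so it cannot differ from $C|x|^{-\gamma-2s}$ by a combination of derivatives of $\delta_0$, all of which have degree $\le -N$.

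Some points in your justification need correcting or tightening. None is a genuine gap; they are the routine details the cited sources handle.

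First, you attribute the hypothesis $\gamma<N-2s$ to the wrong side. The function $|\xi|^{\gamma+2s-N}$ is locally integrable for every $\gamma>-2s$, so that is not where the hypothesis enters. It is needed on the physical side, to make $|x|^{-\gamma-2s}\in L^1_{\mathrm{loc}}$, that is, $\gamma+2s\in(0,N)$. This is the range in which the Fourier pair formula holds with exponent $\gamma+2s$, and it is what excludes a singular part at the origin.

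Second, $\widehat{(-\Delta)^s\varphi}=|\xi|^{2s}\widehat\varphi$ is not a Schwartz function, since it is not smooth at $\xi=0$. So Parseval for tempered distributions does not apply verbatim. The clean fix reuses your subordination formula:
\begin{itemize}
\item for each $t>0$, apply Plancherel to $e^{-t|x|^2}$ and $(-\Delta)^s\varphi$, both of which lie in $L^2$;
\item then integrate in $t$ against $t^{\gamma/2-1}\,dt$;
\item Fubini is justified because $\int|x|^{-\gamma}|(-\Delta)^s\varphi|\,dx<\infty$ and $\int|\xi|^{\gamma+2s-N}|\widehat\varphi|\,d\xi<\infty$.
\end{itemize}

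Third, moving $(-\Delta)^s$ from $u$ onto $\varphi\in C_c^\infty(\R^N\setminus\{0\})$ needs more than $u\,(-\Delta)^s\varphi\in L^1$: you must justify exchanging the principal value with the $x$-integral. One way:
\begin{itemize}
\item write the operator with the second difference $2u(x)-u(x+z)-u(x-z)$;
\item for each fixed $z$, shift the differences onto $\varphi$;
\item verify absolute integrability in $(x,z)$ separately on $|z|<r$ and $|z|\ge r$, where $r$ is less than half the distance from $\operatorname{supp}\varphi$ to $0$.
\end{itemize}

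Finally, the principal-value integral is continuous on $\R^N\setminus\{0\}$. This upgrades the distributional identity there to the pointwise identity asserted in the lemma.
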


\begin{proposition}[Single-pole extremals and their asymptotics \cite{DMPS,GS,Fall,GRSZ}]
\label{prop:Umu}
Let $0<\mu<\Lam$. Then $S_\mu$ is attained in $\dHs$ by a positive, radially
symmetric, radially non-increasing function $U_\mu$, which after normalisation
satisfies
\begin{equation}\label{eq:weakUmu}
\mathcal E(U_\mu,\psi)-\mu\int_{\R^N}\frac{U_\mu\psi}{|x|^{2s}}\,dx
=\int_{\R^N}U_\mu^{\cstar-1}\psi\,dx\, \text{for every }\psi\in\dHs .
\end{equation}
Taking $\psi=U_\mu$ in \eqref{eq:weakUmu} gives
$[U_\mu]_s^2-\mu\int\frac{U_\mu^2}{|x|^{2s}}=\|U_\mu\|^{\cstar}_{L^{\cstar}}$ and
hence
\begin{equation}\label{eq:SmuUmu}
S_\mu=\|U_\mu\|_{L^{\cstar}}^{\cstar-2} .
\end{equation}
Moreover there is $C=C(N,s,\mu)\ge1$ with
\begin{equation}\label{eq:twoside}
C^{-1}\min\big\{|y|^{-\beta_-(\mu)},|y|^{-\beta_+(\mu)}\big\}\le U_\mu(y)\le
C\min\big\{|y|^{-\beta_-(\mu)},|y|^{-\beta_+(\mu)}\big\},\, y\neq0,
\end{equation}
where $\beta_-(\mu)<\frac{N-2s}{2}<\beta_+(\mu)$ are the two roots in $(0,N-2s)$ of
$\Psins(\gamma)=\mu$ (see Proposition~\ref{prop:psi}). The behaviour at the
singularity is the local asymptotics of \cite{FF}.
\end{proposition}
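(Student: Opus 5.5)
The statement just above is Proposition~\ref{prop:Umu}, which collects known facts from \cite{DMPS,GS,Fall,GRSZ}. The proof therefore assembles these facts in order, and the only computations are the identities \eqref{eq:weakUmu} and \eqref{eq:SmuUmu}.

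\emph{Existence of an extremal.} Since $0<\mu<\Lam$, Theorem~\ref{thm:hardy} shows that the numerator in \eqref{eq:Smu} is comparable to $[u]_s^2$, so $0<S_\mu\le S$. Replacing $u$ by $|u|$ lowers $[u]_s$ and leaves the other terms unchanged, so we may minimise over nonnegative functions. Symmetric decreasing rearrangement lowers the Gagliardo seminorm, by the fractional P\'olya--Szeg\H{o} inequality. It raises $\int u^2|x|^{-2s}$, by Hardy--Littlewood, since $|x|^{-2s}$ is its own rearrangement. It preserves the $L^{\cstar}$ norm. Hence we may minimise over radial nonincreasing functions.

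To obtain a minimiser we run concentration-compactness modulo dilations; translations are excluded by radial symmetry. We normalise a minimising sequence so that $\sup_r\int_{B_r}$ of the $L^{\cstar}$ mass equals one half. Loss of compactness could then occur only through a Sobolev bubble detaching at infinity or at the origin. A bubble far from the origin sees no Hardy weight, so it would cost at least $S>S_\mu$. That $S_\mu<S$ follows by testing with a translated Aubin--Talenti profile centred at $0$: for $\mu>0$ the Hardy term strictly decreases the quotient. Thus a minimiser $U_\mu\ge0$ exists, and it is radial and nonincreasing. Positivity comes from the strong minimum principle for $(-\Delta)^s-\mu|x|^{-2s}$, or simply from radial monotonicity combined with $U_\mu\not\equiv0$ and the equation.

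\emph{The Euler--Lagrange equation and \eqref{eq:SmuUmu}.} The minimiser satisfies the Euler--Lagrange equation with Lagrange multiplier $S_\mu\|U_\mu\|_{\cstar}^{2-\cstar}$. By $(\cstar-2)$-homogeneity we rescale $U_\mu\mapsto tU_\mu$ so that the multiplier equals $1$, which gives \eqref{eq:weakUmu}. The integrals there converge for all $\psi\in\dHs$ by Hardy and Cauchy--Schwarz. Taking $\psi=U_\mu$ gives $Q=\|U_\mu\|_{\cstar}^{\cstar}$. Dividing by $\|U_\mu\|_{\cstar}^2$ and using minimality gives $S_\mu=\|U_\mu\|_{\cstar}^{\cstar-2}$.

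\emph{The two-sided bound \eqref{eq:twoside}.} This is the main obstacle, and here we would cite \cite{GRSZ} (and \cite{FF} for the local behaviour). The idea is as follows. First, Moser iteration with the Hardy weight, or the Kelvin transform, shows that $U_\mu|x|^{\beta_-}$ is bounded near $0$. We then use $|x|^{-\beta_\pm}$ as exact solutions of the homogeneous equation, via Lemma~\ref{lem:homogeneous} and $\Psins(\beta_\pm)=\mu$, as barriers in a comparison principle for $(-\Delta)^s-\mu|x|^{-2s}$ on balls and exterior domains. The nonlinear term $U_\mu^{\cstar-1}$ is treated as a perturbation that is lower order at $0$ and at $\infty$.

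The fractional Kelvin transform $\tilde U(x)=|x|^{2s-N}U_\mu(x/|x|^2)$ preserves the equation, including the Hardy term and the critical nonlinearity. It maps the behaviour at infinity to behaviour at the origin and exchanges $\beta_-$ with $N-2s-\beta_-=\beta_+$. The decay $|y|^{-\beta_+}$ at infinity therefore follows from the estimate $|y|^{-\beta_-}$ at the origin. Nonlocality requires that the comparison be done globally, with the tails controlled by the monotonicity of $U_\mu$. This is where we rely on the cited references rather than reproduce the argument.
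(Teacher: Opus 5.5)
Your proposal matches the paper. The paper also takes existence, symmetry, positivity and the two-sided bound \eqref{eq:twoside} from \cite{DMPS,GS,Fall,GRSZ,FF}, and derives only \eqref{eq:SmuUmu}, by testing \eqref{eq:weakUmu} with $\psi=U_\mu$ and using that $U_\mu$ attains $S_\mu$, as you do after normalising the Lagrange multiplier. One correction to your existence sketch: within the radial class no bubble can detach away from the origin, and a profile concentrating at $0$ (or spreading to infinity) is a Hardy--Sobolev profile costing $S_\mu$, not $S$. What rules it out is the dilation normalisation together with strict subadditivity of $t\mapsto t^{2/\cstar}$, not the comparison $S_\mu<S$.
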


\subsection{Strong minimum principle}\label{sec:smp}

We use the following strong minimum principle to prove strict positivity.

\begin{theorem}[Strong minimum principle \cite{DPQ}]\label{thm:smp}
Let $D\subset\R^N$ be a bounded open set, let $c\in L^1_{\mathrm{loc}}(D)$ satisfy
$c\le0$ a.e., and let $w\in\dHs$ vanish a.e.\ outside $D$ and satisfy $w\ge0$ a.e.\ in
$\R^N$ together with
\[
\mathcal E(w,\varphi)\ \ge\ \int_D c\,w\,\varphi\,dx \, \text{for every }\varphi\in\dHs\ \text{with }\varphi=0\text{ a.e.\ outside }D,\
\varphi\ge0 .
\]
Then either $w>0$ a.e.\ in $D$ or $w=0$ a.e.\ in $\R^N$. (The normalisation of the fractional
Laplacian in \cite{DPQ} differs from ours by a positive multiplicative constant, which
does not affect the sign condition on $c$. We do not need connectedness of $D$.)
\end{theorem}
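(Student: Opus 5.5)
We argue by contradiction: assume $w\not\equiv0$ and that the zero set $Z:=\{x\in D:\ w(x)=0\}$ has positive measure, and we show this is impossible. The approach is a De Giorgi–Nash–Moser type logarithmic estimate for nonlocal supersolutions, combined with a purely nonlocal argument that removes the need for connectedness of $D$. Since $c\le0$, the hypothesis says that $w$ is a weak supersolution of $(-\Delta)^s w+|c|\,w\ge0$ in $D$. The zero-order term $|c|w$ has the favourable sign, and we only need it to be locally integrable against bounded test functions.

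\emph{Step 1 (nonlocal propagation of vanishing).} Suppose $w=0$ a.e.\ on some ball $B\Subset D$. Take $\varphi\in C^\infty_c(B)$ with $\varphi\ge0$ and $\varphi\not\equiv0$. Then $w(x)\varphi(x)=0$ a.e., and the double integral collapses to
\[
\mathcal E(w,\varphi)=-C_{N,s}\int_{B}\varphi(x)\int_{\R^N\setminus B}\frac{w(y)}{|x-y|^{N+2s}}\,dy\,dx .
\]
The hypothesis gives $\mathcal E(w,\varphi)\ge\int_D c\,w\varphi=0$. Since $w\ge0$, this forces $w=0$ a.e.\ in $\R^N$. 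Consequently, if $w\not\equiv0$, then $w$ cannot vanish a.e.\ on any ball contained in $D$.

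\emph{Step 2 (logarithmic estimate).} Fix a ball $B_{2r}\Subset D$, a cutoff $\eta\in C^\infty_c(B_{3r/2})$ with $0\le\eta\le1$, $\eta=1$ on $B_r$ and $|\nabla\eta|\le C/r$, and $\delta>0$. Test with
\[
\varphi=\frac{\eta^2}{w+\delta}.
\]
This is admissible: it is nonnegative, supported in $D$, bounded by $\delta^{-1}$, and lies in $\dHs$ because $t\mapsto 1/(t+\delta)$ is Lipschitz on $[0,\infty)$. The right-hand side obeys
\[
\Big|\int c\,w\varphi\Big|\le\int_{B_{2r}}|c|\,dx ,
\]
a bound independent of $\delta$, which is where $c\in L^1_{\mathrm{loc}}(D)$ enters. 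On the left we follow the Di Castro–Kuusi–Palatucci computation. The local part yields $-\iint_{B_r\times B_r}|\log(w+\delta)(x)-\log(w+\delta)(y)|^2K$ plus errors of order $r^{N-2s}$. The tail part, where $y\notin B_{2r}$, has a favourable sign because $w\ge0$ in $\R^N$: it contributes at most $\int\eta^2(x)\int_{|y-x|\ge r/2}|x-y|^{-N-2s}\,dy\,dx\lesssim r^{N-2s}$. Together these give
\[
\iint_{B_r\times B_r}\frac{|\log(w+\delta)(x)-\log(w+\delta)(y)|^2}{|x-y|^{N+2s}}\,dx\,dy\le C\,r^{N-2s}+C\int_{B_{2r}}|c|
\]
uniformly in $\delta$.

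\emph{Step 3 (conclusion).} Suppose both $Z\cap B_r$ and $\{w>t\}\cap B_r$ have positive measure for some $t>0$. On their product the integrand is at least $\log^2(t/\delta)\,(2r)^{-N-2s}$, which tends to $\infty$ as $\delta\to0$ and contradicts Step 2. Hence on every ball $B_r$ with $B_{2r}\Subset D$, either $w=0$ a.e.\ or $w>0$ a.e. Now assume $|Z|>0$. By Lebesgue density there is a ball $B_r$ with $B_{2r}\Subset D$ in which $Z$ has positive measure. The dichotomy then gives $w=0$ a.e.\ on $B_r$, and Step 1 gives $w\equiv0$ in $\R^N$. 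No chaining of balls is needed, so connectedness of $D$ plays no role.

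The main obstacle is Step 2. One must check carefully that the nonlocal tail and the cross terms with $\eta(x)\neq\eta(y)$ are bounded independently of $\delta$ using only $w\ge0$ (with no sign information on $w$ outside $D$ beyond nonnegativity), and that the merely $L^1_{\mathrm{loc}}$ coefficient $c$ is controlled solely through $w/(w+\delta)\le1$.
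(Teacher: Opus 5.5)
Your proof is correct, but the paper gives no proof to compare it with. It imports the statement from \cite{DPQ} and asserts in a parenthesis, without argument, that the normalisation of $\mathcal E$ and the connectedness of $D$ do not matter. It also states the result for $c\in L^1_{\mathrm{loc}}(D)$.

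Your argument is a self-contained substitute, in three parts.
\begin{itemize}
\item The logarithmic estimate of Step 2 gives the local dichotomy ``$w=0$ a.e.\ or $w>0$ a.e.''\ on every ball $B_r$ with $B_{2r}\Subset D$.
\item The $L^1_{\mathrm{loc}}$ coefficient costs only the $\delta$-independent term $\int_{B_{2r}}|c|$, because $0\le w\varphi\le\eta^2$.
\item Step 1 uses only the sign of the nonlocal interaction term. It turns vanishing on one ball into $w\equiv0$ on $\R^N$, and this is exactly what justifies the paper's claim that connectedness is not needed. A purely local argument would have to chain balls inside a connected $D$.
\end{itemize}
The citation buys brevity. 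Your route buys a proof that checks exactly the hypotheses under which the paper uses the result: $c\equiv0$ in Theorem~\ref{thm:A}(iii) and $c=\min\{\lambda,0\}$ in Theorem~\ref{thm:existence}. For these uses even the $L^1_{\mathrm{loc}}$ generality is more than is needed.

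A few points of polish.

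\emph{The obstacle you flag in Step 2.} It is settled by the identity, valid for $a,b>0$ and $\eta_1,\eta_2\ge0$,
\[
(a-b)\Big(\frac{\eta_1^2}{a}-\frac{\eta_2^2}{b}\Big)=(\eta_1-\eta_2)^2-\Big(\eta_1\sqrt{b/a}-\eta_2\sqrt{a/b}\Big)^2 .
\]
Write $t=\log(a/b)$. For $t\ge0$ the last bracket equals $-2\eta_2\sinh(t/2)+(\eta_1-\eta_2)e^{-t/2}$, and the case $t\le0$ is symmetric. This gives the pointwise bound $2(\eta_1-\eta_2)^2-\tfrac12\min\{\eta_1,\eta_2\}^2t^2$. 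Taking $a=w(x)+\delta$ and $b=w(y)+\delta$, this is what you need on $B_{2r}\times B_{2r}$. Off that set the integrand is $\eta^2(x)(1-b/a)\le\eta^2(x)$, as you say.

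\emph{Admissibility of $\varphi$.} The Lipschitz property of $t\mapsto 1/(t+\delta)$ alone does not put $\varphi$ in $\dHs$, since $1/(w+\delta)$ itself is not in $\dHs$. Use instead
\[
|\varphi(x)-\varphi(y)|\le\delta^{-2}|w(x)-w(y)|+\delta^{-1}|\eta^2(x)-\eta^2(y)|,
\]
together with the fact that $\varphi$ is bounded with compact support.

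\emph{Lebesgue density in Step 3.} It is unnecessary: a countable cover of $D$ by balls $B_r$ with $B_{2r}\Subset D$ suffices.
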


\section{The multipolar variational framework}\label{sec:framework}

This section develops the variational framework attached to the multipolar form, its
coercivity, its principal eigenvalue, and the constant $S_*$ that governs the
compactness threshold.

\subsection{Coercivity of the multipolar quadratic form}\label{sec:form}

Assuming \eqref{eq:H}, the Hardy terms can be estimated separately, and the resulting
quadratic form is equivalent to the square of the Gagliardo seminorm.

\begin{proposition}\label{prop:form}
Assume \eqref{eq:H} and set $\sigma:=\Lam^{-1}\sum_{i=1}^k\mu_i\in[0,1)$. Then for
every $u\in X$ each integral in \eqref{eq:Q} is finite and
\begin{equation}\label{eq:equiv}
(1-\sigma)\,[u]_s^2\ \le\ Q_{\boldsymbol\mu,\mathbf a}(u)\ \le\ [u]_s^2 .
\end{equation}
In particular $\mathcal B_{\boldsymbol\mu,\mathbf a}$ of \eqref{eq:B} is an inner
product on $X$. The associated norm
$\|u\|_{\boldsymbol\mu,\mathbf a}:=Q_{\boldsymbol\mu,\mathbf a}(u)^{1/2}$ is
equivalent to $[\,\cdot\,]_s$, so that $(X,\mathcal B_{\boldsymbol\mu,\mathbf a})$ is
a Hilbert space with the same weak topology as $X$, and
$Q_{\boldsymbol\mu,\mathbf a}$ is sequentially weakly lower semicontinuous on $X$.
\end{proposition}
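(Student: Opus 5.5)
The statement to prove is Proposition~\ref{prop:form}. The plan is to apply the sharp fractional Hardy inequality (Theorem~\ref{thm:hardy}) once for each pole, after translating it to that pole. The Gagliardo seminorm is translation invariant: for $u\in X\subset\dHs$ and $\tau_i u(x):=u(x+a_i)$ one has $[\tau_i u]_s=[u]_s$. Applying Theorem~\ref{thm:hardy} to $\tau_i u$ and changing variables gives
\[
\int_{\R^N}\frac{u^2}{|x-a_i|^{2s}}\,dx\le \Lam^{-1}[u]_s^2 ,\qquad i=1,\dots,k .
\]
Since $u=0$ a.e.\ outside $\Omega$, the integral over $\Omega$ in \eqref{eq:Q} equals the integral over $\R^N$. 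Hence each Hardy integral is finite, and Cauchy-Schwarz shows that $\int_\Omega |uv|\,|x-a_i|^{-2s}\,dx\le \Lam^{-1}[u]_s[v]_s$. This gives the finiteness claims for $\mathcal B_{\boldsymbol\mu,\mathbf a}$ and for the weak formulation; for the latter we also use $X\hookrightarrow L^2$ and $L^{\cstar}$, so that the terms $\lambda\int u\varphi$ and $\int|u|^{\cstar-1}|\varphi|$ are finite by Hölder's inequality.

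Next we prove \eqref{eq:equiv}. Since $\mu_i\ge0$, every subtracted term is nonnegative, which gives the upper bound $Q_{\boldsymbol\mu,\mathbf a}(u)\le[u]_s^2$. For the lower bound we sum the $k$ translated Hardy inequalities with weights $\mu_i$:
\[
\sum_i\mu_i\int\frac{u^2}{|x-a_i|^{2s}}\le \sigma[u]_s^2 ,
\]
so $Q_{\boldsymbol\mu,\mathbf a}(u)\ge(1-\sigma)[u]_s^2$, and $1-\sigma>0$ by \eqref{eq:H}. This crude estimate ignores how far apart the poles are, which is exactly why the hypothesis involves the sum of the masses.

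The remaining consequences are standard Hilbert-space facts. The form $\mathcal B_{\boldsymbol\mu,\mathbf a}$ is symmetric and bilinear. It is positive definite by the lower bound in \eqref{eq:equiv}, because $[\cdot]_s$ is a norm on $X$: a function in $X$ with $[u]_s=0$ vanishes, since it vanishes outside $\Omega$ (equivalently, because $\dHs$ embeds into $L^{\cstar}$). Because the two norms are equivalent, $X$ is complete under $\|\cdot\|_{\boldsymbol\mu,\mathbf a}$. The dual spaces coincide, so the weak topologies coincide as well. Finally, the square of a Hilbert norm is weakly sequentially lower semicontinuous: if $u_n\rightharpoonup u$, then $\mathcal B(u,u)=\lim\mathcal B(u_n,u)\le\liminf\|u_n\|\,\|u\|$, and dividing by $\|u\|$ gives $\|u\|\le\liminf\|u_n\|$.

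There is no real obstacle in this argument. The only point needing care is the translation step: we must justify applying Theorem~\ref{thm:hardy}, which is stated on all of $\dHs$, to the translates $\tau_i u$. This holds because $\dHs$ is the completion of $C^\infty_c$ and $C^\infty_c$ is translation invariant.
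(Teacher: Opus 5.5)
Your proposal is correct and follows essentially the same route as the paper: you apply the translated Hardy inequality at each pole, sum with the weights $\mu_i$ to get \eqref{eq:equiv}, use Cauchy--Schwarz to bound $\mathcal B_{\boldsymbol\mu,\mathbf a}$, and then invoke the equivalence of the two norms. The only difference is cosmetic: you prove weak lower semicontinuity directly from Cauchy--Schwarz for $\mathcal B_{\boldsymbol\mu,\mathbf a}$, whereas the paper deduces it from convexity plus strong continuity of $Q_{\boldsymbol\mu,\mathbf a}$.
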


\begin{proof}
Fix $i$ and $u\in X\subset\dHs$. The seminorm $[\,\cdot\,]_s$ is invariant under
translations, so applying Theorem~\ref{thm:hardy} to $u(\cdot+a_i)$ gives
\begin{equation}\label{eq:hardy-i}
\int_{\R^N}\frac{u^2}{|x-a_i|^{2s}}\,dx\ \le\ \frac{1}{\Lam}\,[u]_s^2<\infty .
\end{equation}
Since $u=0$ a.e.\ outside $\Omega$, the integral over $\Omega$ equals the integral
over $\R^N$. Summing \eqref{eq:hardy-i} against the weights $\mu_i\ge0$,
\[
\sum_{i=1}^k\mu_i\int_\Omega\frac{u^2}{|x-a_i|^{2s}}\,dx
\le\frac{\sum_i\mu_i}{\Lam}\,[u]_s^2=\sigma\,[u]_s^2 ,
\]
which gives the left inequality in \eqref{eq:equiv}. The right inequality follows because all the subtracted terms are nonnegative.

For $u,\varphi\in X$, Cauchy-Schwarz and \eqref{eq:hardy-i} give
$\int_\Omega\frac{|u\varphi|}{|x-a_i|^{2s}}\le\Lam^{-1}[u]_s[\varphi]_s<\infty$, so
$\mathcal B_{\boldsymbol\mu,\mathbf a}$ is a well defined symmetric bilinear form on
$X$, bounded by $(1+\sigma)[u]_s[\varphi]_s$, and the weak formulation in
Subsection~\ref{sec:spaces} is well posed. By \eqref{eq:equiv} it is positive
definite, and $\|\cdot\|_{\boldsymbol\mu,\mathbf a}$ is a norm equivalent to
$[\,\cdot\,]_s$. The two equivalent Hilbert norms on the same vector space induce the same
weak topology. Finally $Q_{\boldsymbol\mu,\mathbf a}=\|\cdot\|^2_{\boldsymbol\mu,\mathbf a}$
is convex and strongly continuous on $X$, therefore sequentially weakly lower
semicontinuous.
\end{proof}

\begin{remark}
Condition \eqref{eq:H} is sufficient but not necessary for \eqref{eq:equiv}. It does
not use the mutual distances of the poles. Sharper conditions in the local case are
in \cite{FMT}. In the fractional case, necessary and sufficient conditions on the
configuration of the poles for positivity are obtained in \cite{FMO}.
\end{remark}

\subsection{The principal eigenvalue}\label{sec:eigen}

The principal eigenvalue below determines the range of $\lambda$ used throughout the
paper.

\begin{definition}
Assuming \eqref{eq:H}, define
$\lambda_1(\boldsymbol\mu,\mathbf a):=\inf_{u\in X\setminus\{0\}}
\frac{Q_{\boldsymbol\mu,\mathbf a}(u)}{\displaystyle\int_\Omega u^2\,dx}.$
\end{definition}

The following theorem gives the properties of $\lambda_1(\boldsymbol\mu,\mathbf a)$
needed below.

\begin{theorem}\label{thm:A}
Assume \eqref{eq:H} and write $\lambda_1=\lambda_1(\boldsymbol\mu,\mathbf a)$,
$\sigma=\Lam^{-1}\sum_i\mu_i$. Then we have
\begin{enumerate}
\item[(i)] $(1-\sigma)\,S\,|\Omega|^{-2s/N}\le\lambda_1<\infty$, in particular $\lambda_1>0$.
\item[(ii)] The infimum is attained.
\item[(iii)] If $u$ is a minimizer, then $|u|$ is a minimizer and $|u|>0$ a.e.\ in
$\Omega$. Furthermore, $u$ itself has a.e.\ constant sign, either $u>0$ a.e.\ in $\Omega$
or $u<0$ a.e.\ in $\Omega$.
\item[(iv)] $\lambda_1$ is simple: the set of minimizers together with $0$ is a one-dimensional subspace of $X$.
\item[(v)] For every $\lambda<\lambda_1$ there is $\theta=\theta(\lambda)>0$ with
\[
Q_{\boldsymbol\mu,\mathbf a}(u)-\lambda\int_\Omega u^2\,dx\ \ge\ \theta\,[u]_s^2 \, \text{for all }u\in X .
\]
\end{enumerate}
\end{theorem}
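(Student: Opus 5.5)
We prove the five items in the order stated, with all the work done through Proposition~\ref{prop:form} and the compact embedding $X\hookrightarrow L^2(\Omega)$.

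\emph{Item (i).} For $u\in X$ we combine the left inequality of \eqref{eq:equiv}, Theorem~\ref{thm:sobolev} and H\"older on the bounded set $\Omega$:
\[
Q_{\boldsymbol\mu,\mathbf a}(u)\ge(1-\sigma)[u]_s^2\ge(1-\sigma)S\|u\|_{L^{\cstar}}^2\ge(1-\sigma)S|\Omega|^{-2s/N}\|u\|_{L^2}^2 .
\]
Finiteness follows by testing the quotient with any nonzero $C^\infty_c(\Omega)$ function.

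\emph{Item (ii).} We use the direct method. Take a minimizing sequence normalized by $\|u_n\|_{L^2}=1$. By \eqref{eq:equiv} it is bounded in $X$, so a subsequence converges weakly in $X$ and strongly in $L^2(\Omega)$ to some $u$. The strong $L^2$ convergence gives $\|u\|_{L^2}=1$, so $u\ne0$. The weak lower semicontinuity of $Q_{\boldsymbol\mu,\mathbf a}$ from Proposition~\ref{prop:form} then gives $Q(u)\le\lambda_1$, so $u$ is a minimizer.

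\emph{Item (iii).} This is the step I expect to be the main obstacle. First, $\big||u(x)|-|u(y)|\big|\le|u(x)-u(y)|$ gives $[|u|]_s\le[u]_s$, while the Hardy and $L^2$ terms are unchanged. Hence $|u|$ is also a minimizer.

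Next, a minimizer $w$ satisfies the Euler--Lagrange equation $\mathcal B_{\boldsymbol\mu,\mathbf a}(w,\varphi)=\lambda_1\int w\varphi$ for all $\varphi\in X$. Taking $w=|u|\ge0$ and $\varphi\ge0$, we get
\[
\mathcal E(w,\varphi)=\int\Big(\lambda_1+\sum_i\mu_i|x-a_i|^{-2s}\Big)w\varphi\ge0 .
\]
So $w$ satisfies the hypothesis of Theorem~\ref{thm:smp} with $c\equiv0$ and $D=\Omega$. Since $w\ne0$, we conclude $|u|>0$ a.e.

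For the constant sign, the cleanest route uses equality in the seminorm inequality. Since $Q(|u|)=Q(u)$, we have $[|u|]_s=[u]_s$. Also $|u(x)-u(y)|^2-\big||u(x)|-|u(y)|\big|^2=4u^+(x)u^-(y)+4u^-(x)u^+(y)$, so
\[
\iint\frac{u^+(x)u^-(y)}{|x-y|^{N+2s}}=0 .
\]
The kernel is positive, so $u^+=0$ a.e.\ or $u^-=0$ a.e. Combined with $|u|>0$, this gives the claimed dichotomy.

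\emph{Item (iv).} The set of minimizers together with $0$ is the kernel of $Q-\lambda_1\|\cdot\|_{L^2}^2$ restricted to the minimizing set, and by the Euler--Lagrange equation it is a linear space. Suppose $u,v$ are two minimizers. Choose $t$ with $\int(u-tv)\,\phi=0$ for a fixed positive minimizer $\phi$; more simply, choose $t$ so that $\int_\Omega(u-tv)=0$. Then $u-tv$ is either $0$ or a minimizer. A minimizer has constant sign by (iii), so its integral cannot vanish. Hence $u=tv$.

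\emph{Item (v).} We interpolate between two lower bounds. For $\lambda\le0$, take $\theta=1-\sigma$ directly from \eqref{eq:equiv}. For $0<\lambda<\lambda_1$, write
\[
Q-\lambda\|u\|_2^2=\Big(1-\frac{\lambda}{\lambda_1}\Big)Q+\frac{\lambda}{\lambda_1}\big(Q-\lambda_1\|u\|_2^2\big)\ge\Big(1-\frac{\lambda}{\lambda_1}\Big)(1-\sigma)[u]_s^2 ,
\]
using that the second bracket is nonnegative by the definition of $\lambda_1$.
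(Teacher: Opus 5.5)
Your proposal is correct and follows the paper's proof essentially step by step: the same Sobolev--H\"older chain for (i), the direct method for (ii), the strong minimum principle with $c\equiv0$ plus the equality case $[\,|u|\,]_s=[u]_s$ for (iii) (your identity $4\big(u^+(x)u^-(y)+u^-(x)u^+(y)\big)$ is the paper's $2(|\alpha\beta|-\alpha\beta)$), the zero-mean combination $u-tv$ for (iv), and the same two-case bound for (v). Two points you leave implicit both follow at once. First, $\int_\Omega v\neq0$, which you need in order to choose $t$, holds by (iii). Second, every nonzero solution of the Euler--Lagrange equation is a minimizer, as you see by testing the equation with itself; this is what makes the set of minimizers together with $0$ a linear space.
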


\begin{proof}
\emph{(i).} By Proposition~\ref{prop:form}, Theorem~\ref{thm:sobolev} and H\"older's
inequality on the bounded set $\Omega$,
\[
Q_{\boldsymbol\mu,\mathbf a}(u)\ge(1-\sigma)[u]_s^2\ge(1-\sigma)S\|u\|_{L^{\cstar}(\Omega)}^2
\ge(1-\sigma)S\,|\Omega|^{-2s/N}\|u\|_{L^2(\Omega)}^2 ,
\]
where we used $\|u\|_{L^2}\le|\Omega|^{\frac12-\frac1{\cstar}}\|u\|_{L^{\cstar}}$ and
$\frac12-\frac1{\cstar}=\frac{s}{N}$. Taking the infimum gives the lower bound.
Choosing any fixed $u\in C_c^\infty(\Omega)\setminus\{0\}$ and using
$Q_{\boldsymbol\mu,\mathbf a}(u)\le[u]_s^2<\infty$ gives $\lambda_1<\infty$.

\emph{(ii).} Let $(u_n)\subset X$ with $\|u_n\|_{L^2(\Omega)}=1$ and
$Q_{\boldsymbol\mu,\mathbf a}(u_n)\to\lambda_1$. By \eqref{eq:equiv},
$[u_n]_s^2\le(1-\sigma)^{-1}Q_{\boldsymbol\mu,\mathbf a}(u_n)$ is bounded, so up to a
subsequence $u_n\rightharpoonup u$ in $X$ and, by compactness of
$X\hookrightarrow L^2(\Omega)$, $u_n\to u$ in $L^2(\Omega)$. Hence
$\|u\|_{L^2(\Omega)}=1$ and $u\ne0$. By the weak lower semicontinuity in
Proposition~\ref{prop:form},
\[
Q_{\boldsymbol\mu,\mathbf a}(u)\le\liminf_n Q_{\boldsymbol\mu,\mathbf a}(u_n)=\lambda_1 ,
\]
while $Q_{\boldsymbol\mu,\mathbf a}(u)\ge\lambda_1\|u\|^2_{L^2(\Omega)}=\lambda_1$ by
definition of $\lambda_1$. So $u$ is a minimizer.

\emph{(iii).} Let $u$ be a minimizer, that is $u\in X\setminus\{0\}$ and
$Q_{\boldsymbol\mu,\mathbf a}(u)=\lambda_1\|u\|^2_{L^2(\Omega)}$, and put $w:=|u|\in X$.
Since $\big||u(x)|-|u(y)|\big|\le|u(x)-u(y)|$ pointwise we have $[w]_s\le[u]_s$, while
$w^2=u^2$, so the Hardy terms and $\|u\|_{L^2}$  remain unchanged. Hence
\[
\lambda_1\|u\|^2_{L^2(\Omega)}=\lambda_1\|w\|^2_{L^2(\Omega)}
\ \le\ Q_{\boldsymbol\mu,\mathbf a}(w)\ \le\ Q_{\boldsymbol\mu,\mathbf a}(u)
=\lambda_1\|u\|^2_{L^2(\Omega)},
\]
the first inequality being the definition of $\lambda_1$ applied to $w\neq0$. So $w$
is a minimizer, and subtracting the equal (and finite, by
Proposition~\ref{prop:form}) Hardy terms from the middle equality gives
\begin{equation}\label{eq:seminormeq}
[\,|u|\,]_s^2=[u]_s^2 .
\end{equation}

By Proposition~\ref{prop:form} the map $v\mapsto Q_{\boldsymbol\mu,\mathbf a}(v)$ is a
bounded quadratic form on $X$, so $v\mapsto
Q_{\boldsymbol\mu,\mathbf a}(v)\|v\|_{L^2(\Omega)}^{-2}$ is of class $C^1$ on
$X\setminus\{0\}$ and its derivative vanishes at every minimizer, so every minimizer
satisfies
\begin{equation}\label{eq:eigen}
\langle v,\varphi\rangle_X-\sum_i\mu_i\int_\Omega\frac{v\varphi}{|x-a_i|^{2s}}
=\lambda_1\int_\Omega v\varphi \, \text{for all }\varphi\in X
\end{equation}
Every minimizer therefore satisfies \eqref{eq:eigen}, all terms being finite by
Proposition~\ref{prop:form}, so we use this for $v=w$. If
$\varphi\in X$ with $\varphi\ge0$, then, since $w\ge0$, $\mu_i\ge0$ and $\lambda_1>0$
by (i), \eqref{eq:eigen} gives
\[
\mathcal E(w,\varphi)=\lambda_1\int_\Omega w\varphi
+\sum_i\mu_i\int_\Omega\frac{w\varphi}{|x-a_i|^{2s}}\ \ge\ 0 .
\]
Thus $w$ satisfies the hypotheses of Theorem~\ref{thm:smp} with $D=\Omega$ and
$c\equiv0$: either $w>0$ a.e.\ in $\Omega$ or $w=0$ a.e.\ in $\R^N$. Since $u\neq0$, only the first case is possible, so $|u|=w>0$ a.e.\ in
$\Omega$.

It remains to show that $u$ has constant sign. For all real numbers $\alpha,\beta$,
\[
(\alpha-\beta)^2-\big(|\alpha|-|\beta|\big)^2=2\big(|\alpha\beta|-\alpha\beta\big)\ \ge\ 0 ,
\]
so, applying this with $\alpha=u(x)$, $\beta=u(y)$ and using \eqref{eq:seminormeq},
\[
0=[u]_s^2-[\,|u|\,]_s^2
=C_{N,s}\iint_{\R^{2N}}\frac{|u(x)u(y)|-u(x)u(y)}{|x-y|^{N+2s}}\,dx\,dy ,
\]
an integral with nonnegative integrand. Hence $u(x)u(y)\ge0$ for a.e.\
$(x,y)\in\R^{2N}$. If both $\{u>0\}$ and $\{u<0\}$ had positive measure, then
$\{u>0\}\times\{u<0\}$ would have positive measure in $\R^{2N}$ and $u(x)u(y)<0$ on
it, a contradiction, so one of the two sets has Lebesgue measure zero. Since $|u|>0$ a.e.\ in
$\Omega$, either $u>0$ a.e.\ in $\Omega$ or $u<0$ a.e.\ in $\Omega$.

\emph{(iv).} Let $E\subset X$ be the set of $v\in X$ satisfying \eqref{eq:eigen}.
Then $E$ is a linear subspace, and every $v\in E\setminus\{0\}$ is a minimizer,
since taking $\varphi=v$ in \eqref{eq:eigen} gives
$Q_{\boldsymbol\mu,\mathbf a}(v)=\lambda_1\|v\|^2_{L^2(\Omega)}$. Suppose
$\dim E\ge2$ and pick $u,v\in E$ linearly independent. Both are minimizers, so by
\emph{(iii)} each of them has a.e.\ constant sign in $\Omega$ and $|u|,|v|>0$ a.e.\
there, since $\Omega$ is bounded, $u,v\in L^1(\Omega)$ and
$\big|\int_\Omega u\big|=\int_\Omega|u|>0$, and likewise for $v$. Put $t:=\left(\int_\Omega u\right)/\left(\int_\Omega v\right)$ and $z:=u-tv\in E$.
Then $z\neq0$ by linear independence, so $z$ is a minimizer and \emph{(iii)} gives
$\left|\int_\Omega z\right|=\int_\Omega|z|>0$. But $\int_\Omega z=0$ by the choice of
$t$, a contradiction. Hence $\dim E=1$.

\emph{(v).} If $\lambda\le0$ then, by \eqref{eq:equiv},
$Q_{\boldsymbol\mu,\mathbf a}(u)-\lambda\|u\|_{L^2}^2\ge Q_{\boldsymbol\mu,\mathbf a}(u)
\ge(1-\sigma)[u]_s^2$, so $\theta=1-\sigma$ works. If $0<\lambda<\lambda_1$ then
$\|u\|^2_{L^2}\le\lambda_1^{-1}Q_{\boldsymbol\mu,\mathbf a}(u)$ gives
\[
Q_{\boldsymbol\mu,\mathbf a}(u)-\lambda\|u\|^2_{L^2}
\ge\Big(1-\frac{\lambda}{\lambda_1}\Big)Q_{\boldsymbol\mu,\mathbf a}(u)
\ge\Big(1-\frac{\lambda}{\lambda_1}\Big)(1-\sigma)[u]_s^2 ,
\]
so we have $\theta=(1-\lambda/\lambda_1)(1-\sigma)>0$.
\end{proof}

\subsection{The one-pole constants and the compactness level}\label{sec:Smu}

The constant $S_*$ of \eqref{eq:Sstar} is defined as a minimum over $k+1$ numbers. We
now identify this minimum explicitly.

\begin{proposition}\label{prop:Smu}
For $0\le\mu<\Lam$ one has $S_\mu\ge\left(1-\frac{\mu}{\Lam}\right)S>0$, and the map
$\mu\mapsto S_\mu$ is strictly decreasing on $[0,\Lam)$. Consequently, under
\eqref{eq:H},
\[
S_*=S_{\mu_{\max}},\,\text{and}\, S_{\mu_j}=S_*\ \text{ exactly for } j\in J_* .
\]
\end{proposition}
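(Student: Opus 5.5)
The plan has three steps: a lower bound from Hardy, strict monotonicity from the attained extremals, and then the identification of $S_*$.

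\emph{Lower bound.} By Theorem~\ref{thm:hardy}, for every $u\in\dHs$ we have $\mu\int_{\R^N}u^2|x|^{-2s}\le\frac{\mu}{\Lam}[u]_s^2$. Hence the numerator in \eqref{eq:Smu} is at least $(1-\mu/\Lam)[u]_s^2$. Theorem~\ref{thm:sobolev} then bounds this below by $(1-\mu/\Lam)S\|u\|_{L^{\cstar}}^2$, and taking the infimum gives $S_\mu\ge(1-\mu/\Lam)S>0$.

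\emph{Monotonicity.} Weak monotonicity is immediate: for fixed $u$ the quotient in \eqref{eq:Smu} is non-increasing in $\mu$, since the Hardy integral is nonnegative. For strictness, take $0\le\mu<\mu'<\Lam$. Proposition~\ref{prop:Umu} guarantees that $S_{\mu'}$ is attained by $U_{\mu'}$, since $\mu'>0$. Insert $U_{\mu'}$ as a test function for $S_\mu$:
\[
S_\mu\le\frac{[U_{\mu'}]_s^2-\mu\int\frac{U_{\mu'}^2}{|x|^{2s}}}{\|U_{\mu'}\|_{L^{\cstar}}^2}
=S_{\mu'}+(\mu'-\mu)\frac{\int U_{\mu'}^2|x|^{-2s}\,dx}{\|U_{\mu'}\|_{L^{\cstar}}^2}>S_{\mu'} .
\]
The inequality is strict because $U_{\mu'}>0$, so the Hardy integral is strictly positive. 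It is also finite, by Theorem~\ref{thm:hardy} and $U_{\mu'}\in\dHs$. This covers $\mu=0$ as well, since Theorem~\ref{thm:sobolev} treats that case by the same formula. The key point is to test with the extremal at the \emph{larger} parameter; testing at the smaller one would only yield the non-strict inequality. Attainment at $\mu'>0$ is supplied by Proposition~\ref{prop:Umu}, so nothing here is a real obstacle.

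\emph{Identification of $S_*$.} Under \eqref{eq:H} each $\mu_i\le\sum_i\mu_i<\Lam$, so every $S_{\mu_i}$ is defined. Since $\mu\mapsto S_\mu$ is strictly decreasing on $[0,\Lam)$, the minimum of $S=S_0,S_{\mu_1},\dots,S_{\mu_k}$ is the value at the largest parameter, $\max\{0,\mu_{\max}\}=\mu_{\max}$. Hence $S_*=S_{\mu_{\max}}$. Moreover, strict decrease makes the map injective, so $S_{\mu_j}=S_{\mu_{\max}}$ if and only if $\mu_j=\mu_{\max}$, that is, $j\in J_*$.
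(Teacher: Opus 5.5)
Your lower bound and your identification of $S_*$ are fine, but the strict-monotonicity step has a genuine gap. Your display is a chain of the form $S_\mu\le S_{\mu'}+c$ with
\[
c=(\mu'-\mu)\,\frac{\int U_{\mu'}^2|x|^{-2s}\,dx}{\|U_{\mu'}\|^2_{L^{\cstar}}}>0 .
\]
This is only an upper bound on $S_\mu$. Knowing that $S_\mu$ is $\le$ a number that exceeds $S_{\mu'}$ says nothing about how $S_\mu$ and $S_{\mu'}$ compare. Combined with weak monotonicity it gives only $S_{\mu'}\le S_\mu\le S_{\mu'}+c$, which is perfectly consistent with $S_\mu=S_{\mu'}$.

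Your remark about which extremal to use is exactly reversed. To prove $S_{\mu'}<S_\mu$ you need an upper bound on $S_{\mu'}$ that lies strictly below $S_\mu$. So you must test the quotient defining $S_{\mu'}$ with an extremal $V$ of $S_\mu$, the \emph{smaller} parameter:
\[
S_{\mu'}\le\frac{[V]_s^2-\mu'\int V^2|x|^{-2s}\,dx}{\|V\|^2_{L^{\cstar}}}
=S_\mu-(\mu'-\mu)\,\frac{\int V^2|x|^{-2s}\,dx}{\|V\|^2_{L^{\cstar}}}<S_\mu .
\]
Here the Hardy integral is finite by Theorem~\ref{thm:hardy} and positive because $V\not\equiv0$. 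This is the paper's argument. It needs attainment at the smaller parameter, including $\mu=0$. That case is supplied by the Aubin--Talenti extremals of Theorem~\ref{thm:sobolev}, and the case $\mu>0$ by Proposition~\ref{prop:Umu}. With this correction the rest of your proof goes through unchanged.
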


\begin{proof}
The lower bound follows from Theorems~\ref{thm:hardy} and~\ref{thm:sobolev} for
$u\in\dHs\setminus\{0\}$,
\[
[u]_s^2-\mu\int\frac{u^2}{|x|^{2s}}\ \ge\ \Big(1-\frac{\mu}{\Lam}\Big)[u]_s^2
\ \ge\ \Big(1-\frac{\mu}{\Lam}\Big)S\,\|u\|^2_{L^{\cstar}} .
\]
Let $0\le\mu_1<\mu_2<\Lam$. By Theorem~\ref{thm:sobolev} (if $\mu_1=0$) or
Proposition~\ref{prop:Umu} (if $\mu_1>0$), $S_{\mu_1}$ is attained by some
$V\in\dHs\setminus\{0\}$. By Theorem~\ref{thm:hardy},
$0<\int_{\R^N}\frac{V^2}{|x|^{2s}}dx<\infty$, the upper bound by Hardy and the lower
bound because $V\not\equiv0$. Using $V$ as a test function in \eqref{eq:Smu},
\[
S_{\mu_2}\ \le\
\frac{[V]_s^2-\mu_2\int\frac{V^2}{|x|^{2s}}}{\|V\|_{L^{\cstar}}^2}
= S_{\mu_1}-(\mu_2-\mu_1)\,\frac{\int\frac{V^2}{|x|^{2s}}}{\|V\|_{L^{\cstar}}^2}
\ <\ S_{\mu_1} .
\]
Strict monotonicity gives $\min\{S,S_{\mu_1},\dots,S_{\mu_k}\}=S_{\mu_{\max}}$ (note
$S=S_0\ge S_{\mu_{\max}}$) and $S_{\mu_j}=S_{\mu_{\max}}$ if and only if
$\mu_j=\mu_{\max}$.
\end{proof}

\section{Single-pole asymptotics and perturbation estimates}\label{sec:asymptotics}

This section collects the asymptotic information about the single-pole extremals
$U_\mu$ that drives the expansion of Section~\ref{sec:concentrating}, the homogeneity
function and its threshold, the scaling and tail estimates, and the resulting
weighted $L^2$-asymptotics.

\subsection{The homogeneity function}\label{sec:psi}

We analyse the function $\Psins$ of Lemma~\ref{lem:homogeneous} and determine when the
one-pole extremal decays too slowly to belong to $L^2$.

\begin{proposition}\label{prop:psi}
Let $N>2s$ and let $\Psins$ be as in Lemma~\ref{lem:homogeneous}. Then we have
\begin{enumerate}
\item[(i)] $\Psins$ is positive and real analytic on $(0,N-2s)$, and
$\Psins(\gamma)=\Psins(N-2s-\gamma)$ for all $\gamma\in(0,N-2s)$.
\item[(ii)] $\Psins$ is strictly increasing on $\left(0,\frac{N-2s}{2}\right]$ and
strictly decreasing on $\left[\frac{N-2s}{2},N-2s\right)$, with
\[
\max_{(0,N-2s)}\Psins=\Psins\!\Big(\frac{N-2s}{2}\Big)=\Lam ,
\]
and $\Psins(\gamma)\to0$ as $\gamma\to0^+$ and as $\gamma\to(N-2s)^-$.
\item[(iii)] for every $\mu\in(0,\Lam)$ the equation $\Psins(\gamma)=\mu$ has exactly
two solutions $\beta_-(\mu)<\frac{N-2s}{2}<\beta_+(\mu)$ in $(0,N-2s)$, and
$\beta_-(\mu)+\beta_+(\mu)=N-2s$. With the notation
\[
\beta_+(0):=N-2s=\lim_{\mu\to0^+}\beta_+(\mu),\,
\beta_-(0):=0=\lim_{\mu\to0^+}\beta_-(\mu),
\]
the map $\mu\mapsto\beta_+(\mu)$ is strictly decreasing on $[0,\Lam)$ and
$\beta_+(\mu)\to\frac{N-2s}{2}$ as $\mu\to\Lam^-$.
\end{enumerate}
\end{proposition}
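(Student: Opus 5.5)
Set $\gamma=\frac{N-2s}{2}+t$ with $t\in\left(-\frac{N-2s}{2},\frac{N-2s}{2}\right)$. Then \eqref{eq:psi} becomes
\[
\Psins\Big(\tfrac{N-2s}{2}+t\Big)=2^{2s}\,
\frac{\Gamma\!\big(\frac{N+2s}{4}+\frac t2\big)\,\Gamma\!\big(\frac{N+2s}{4}-\frac t2\big)}
{\Gamma\!\big(\frac{N-2s}{4}+\frac t2\big)\,\Gamma\!\big(\frac{N-2s}{4}-\frac t2\big)} .
\]
All four Gamma arguments lie in $(0,\infty)$ on this range, so each factor is positive and real analytic. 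The expression is visibly even in $t$, which is the symmetry $\Psins(\gamma)=\Psins(N-2s-\gamma)$ of (i). At $t=0$ it equals $2^{2s}\Gamma(\frac{N+2s}{4})^2/\Gamma(\frac{N-2s}{4})^2=\Lam$ by Theorem~\ref{thm:hardy}. As $\gamma\to0^+$ the factor $1/\Gamma(\gamma/2)\to0$ while the other three factors stay finite and positive, so $\Psins\to0$. The limit at $(N-2s)^-$ then follows from the symmetry.

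For the monotonicity in (ii), take the logarithm and write $a=\frac{N+2s}{4}$, $b=\frac{N-2s}{4}$, so that $a-b=s>0$. Then
\[
g(t):=\log\Psins=2s\log2+\log\Gamma(a+\tfrac t2)+\log\Gamma(a-\tfrac t2)-\log\Gamma(b+\tfrac t2)-\log\Gamma(b-\tfrac t2),
\]
and
\[
g'(t)=\tfrac12\Big[\big(\psi(a+\tfrac t2)-\psi(b+\tfrac t2)\big)-\big(\psi(a-\tfrac t2)-\psi(b-\tfrac t2)\big)\Big],
\]
where $\psi$ is the digamma function. Put $h(x)=\psi(x+s)-\psi(x)$ for $x>0$. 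Then $g'(t)=\frac12\big[h(b+\frac t2)-h(b-\frac t2)\big]$. The key fact is that $h$ is strictly decreasing on $(0,\infty)$. Indeed $h'(x)=\psi'(x+s)-\psi'(x)<0$, because the trigamma function $\psi'(x)=\sum_{n\ge0}(x+n)^{-2}$ is strictly decreasing. Hence for $t>0$ we have $b+\frac t2>b-\frac t2>0$, so $g'(t)<0$, and by evenness $g'(t)>0$ for $t<0$. This gives strict increase on $\left(0,\frac{N-2s}{2}\right]$, strict decrease on $\left[\frac{N-2s}{2},N-2s\right)$, and the maximum value $\Lam$. This digamma monotonicity step is the only genuinely non-routine point; one must check that the argument $b-\frac t2$ stays positive, which it does because $|t|<2b$.

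Part (iii) then follows from the intermediate value theorem and strict monotonicity. Each branch is a continuous bijection: $\left(0,\frac{N-2s}{2}\right]\to(0,\Lam]$ and $\left[\frac{N-2s}{2},N-2s\right)\to(0,\Lam]$. So for $\mu\in(0,\Lam)$ there is exactly one root on each side, and both are different from $\frac{N-2s}{2}$ since $\mu<\Lam$. The symmetry in (i) gives $\beta_-(\mu)+\beta_+(\mu)=N-2s$. The map $\beta_+$ is the inverse of the strictly decreasing continuous bijection $\Psins|_{[\frac{N-2s}{2},N-2s)}$, so it is strictly decreasing and continuous. Its limits are $N-2s$ as $\mu\to0^+$ and $\frac{N-2s}{2}$ as $\mu\to\Lam^-$, which are the stated limits and are consistent with the conventions $\beta_+(0)=N-2s$, $\beta_-(0)=0$. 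Monotonicity on the closed interval $[0,\Lam)$ holds because $\beta_+(\mu)<N-2s=\beta_+(0)$ for every $\mu>0$.
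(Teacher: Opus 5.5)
Your proposal is correct and follows essentially the same route as the paper. The paper also writes $\log\Psins$ as a sum of two terms $\log\Gamma(x+s)-\log\Gamma(x)$ evaluated at $\gamma/2$ and $(N-2s-\gamma)/2$ (your $b\pm t/2$), deduces the monotonicity from the strict decrease of $\psi(x+s)-\psi(x)$ via the trigamma series, and obtains (iii) from the intermediate value theorem and inversion exactly as you do; your centering $\gamma=\frac{N-2s}{2}+t$ is only a cosmetic change that turns the symmetry in (i) into evenness in $t$.
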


\begin{proof}
\emph{(i).} For $\gamma\in(0,N-2s)$ the four arguments
$\frac{\gamma+2s}{2},\ \frac{N-\gamma}{2},\ \frac{\gamma}{2},\ \frac{N-2s-\gamma}{2}$
are all positive, so $\Psins$ is positive and real analytic there. Replacing $\gamma$
by $N-2s-\gamma$ maps the pair of numerator arguments
$\left(\frac{\gamma+2s}{2},\frac{N-\gamma}{2}\right)$ onto
$\left(\frac{N-\gamma}{2},\frac{\gamma+2s}{2}\right)$ and the pair of denominator
arguments $\left(\frac{\gamma}{2},\frac{N-2s-\gamma}{2}\right)$ onto
$\left(\frac{N-2s-\gamma}{2},\frac{\gamma}{2}\right)$, so the quotient is unchanged.

\emph{(ii).} Set
\[
G(t):=\log\Gamma(t+s)-\log\Gamma(t),\, t>0 .
\]
Using $\frac{\gamma+2s}{2}=\frac{\gamma}{2}+s$ and
$\frac{N-\gamma}{2}=\frac{N-2s-\gamma}{2}+s$, formula \eqref{eq:psi} reads
\begin{equation}\label{eq:logpsi}
\log\Psins(\gamma)=2s\log2+G\Big(\frac{\gamma}{2}\Big)
+G\Big(\frac{N-2s-\gamma}{2}\Big),\, \gamma\in(0,N-2s).
\end{equation}
(The symmetry in (i) is true since $\gamma\mapsto N-2s-\gamma$ exchanges the two
arguments.) Let $\psi=\Gamma'/\Gamma$ and $g:=G'$, that is
$g(t)=\psi(t+s)-\psi(t)$. Differentiating \eqref{eq:logpsi} in $\gamma$, the second
term depending on $\gamma$ using $\frac{N-2s-\gamma}{2}$, whose derivative is
$-\frac12$,
\[
\frac{d}{d\gamma}\log\Psins(\gamma)
=\frac12\,g\Big(\frac{\gamma}{2}\Big)-\frac12\,g\Big(\frac{N-2s-\gamma}{2}\Big).
\]
Since $\psi'(t)=\sum_{n\ge0}(t+n)^{-2}$ is strictly decreasing on $(0,\infty)$, we
have $g'(t)=\psi'(t+s)-\psi'(t)<0$, so $g$ is strictly decreasing. Therefore, we have,
\[
\frac{d}{d\gamma}\log\Psins(\gamma)>0
\iff g\Big(\frac{\gamma}{2}\Big)>g\Big(\frac{N-2s-\gamma}{2}\Big)
\iff \frac{\gamma}{2}<\frac{N-2s-\gamma}{2}
\iff \gamma<\frac{N-2s}{2},
\]
and the reversed strict inequalities hold for $\gamma>\frac{N-2s}{2}$. Evaluating
\eqref{eq:psi} at $\gamma=\frac{N-2s}{2}$ gives
$\Psins\big(\frac{N-2s}{2}\big)=2^{2s}\Gamma\big(\frac{N+2s}{4}\big)^2
\Gamma\big(\frac{N-2s}{4}\big)^{-2}=\Lam$. The two limits follow from
$\Gamma(\tau)\to+\infty$ as $\tau\to0^+$ applied to $\Gamma(\gamma/2)$ and to
$\Gamma\big(\frac{N-2s-\gamma}{2}\big)$ respectively, the other three factors staying
in a compact subset of $(0,\infty)$.

\emph{(iii).} By (ii) and the intermediate value theorem, $\Psins$ is a strictly
increasing bijection from $\big(0,\frac{N-2s}{2}\big]$ onto $(0,\Lam]$ and a strictly
decreasing bijection from $\big[\frac{N-2s}{2},N-2s\big)$ onto $(0,\Lam]$, so for
$\mu\in(0,\Lam)$ the level set $\{\Psins=\mu\}$ consists of exactly one point in each
of the two intervals. If $\Psins(\beta_+)=\mu$ then $\Psins(N-2s-\beta_+)=\mu$ by (i)
and $N-2s-\beta_+<\frac{N-2s}{2}$, so $N-2s-\beta_+=\beta_-$. The map $\beta_+$ is the
inverse of the strictly decreasing bijection above, hence strictly decreasing and
continuous with $\beta_+(\mu)\to N-2s$ as $\mu\to0^+$ and
$\beta_+(\mu)\to\frac{N-2s}{2}$ as $\mu\to\Lam^-$.
\end{proof}

\subsection{The threshold $\mubar$ and $L^2$-integrability}\label{sec:mubar}

The monotonicity properties of $\Psins$ give an explicit equivalent condition for
$\beta_+(\mu)>\frac N2$.

\begin{theorem}\label{thm:mubar}
Let $\mu\in[0,\Lam)$ and let $\beta_+(\mu)$ be as in
Proposition~\ref{prop:psi}. Then we have
\begin{enumerate}
\item[(i)] If $N\le 4s$, then $\beta_+(\mu)\le\frac{N}{2}$ for every
$\mu\in[0,\Lam)$, with equality only when $N=4s$ and $\mu=0$.
\item[(ii)] If $N>4s$, then
\[
\beta_+(\mu)>\frac{N}{2}\quad\Longleftrightarrow\quad \mu<\mubar,\,
\mubar:=\Psins\Big(\frac{N}{2}\Big)
=2^{2s}\,\frac{\Gamma\!\left(\frac{N+4s}{4}\right)}{\Gamma\!\left(\frac{N-4s}{4}\right)} ,
\]
and $0<\mubar<\Lam$.
\end{enumerate}
\end{theorem}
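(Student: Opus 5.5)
The plan is to reduce everything to the monotonicity of $\Psins$ from Proposition~\ref{prop:psi}(ii)--(iii). Since $\beta_+(\mu)\ge\frac{N-2s}{2}$ and $\Psins$ is a strictly decreasing bijection from $\big[\frac{N-2s}{2},N-2s\big)$ onto $(0,\Lam]$, comparing $\beta_+(\mu)$ with $\frac N2$ amounts to comparing $\mu$ with $\Psins(\frac N2)$. We must first check that $\frac N2$ lies in the right interval. The inequality $\frac N2\ge\frac{N-2s}{2}$ always holds, with strict inequality since $s>0$. The inequality $\frac N2<N-2s$ holds if and only if $N>4s$.

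\emph{Case (i), $N\le4s$.} Here $\frac N2\ge N-2s$. For $\mu\in(0,\Lam)$ we have $\beta_+(\mu)<N-2s\le\frac N2$, so the inequality is strict. For $\mu=0$ the convention gives $\beta_+(0)=N-2s\le\frac N2$, and equality holds exactly when $N=4s$. This gives (i).

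\emph{Case (ii), $N>4s$.} Now $\frac N2\in\big(\frac{N-2s}{2},N-2s\big)$, so $\mubar:=\Psins(\frac N2)$ is defined.
\begin{itemize}
\item Positivity: $\mubar>0$ by Proposition~\ref{prop:psi}(i).
\item Upper bound: $\mubar<\Psins(\frac{N-2s}{2})=\Lam$ by strict monotonicity on $\big[\frac{N-2s}{2},N-2s\big)$.
\item Equivalence for $\mu\in(0,\Lam)$: since $\beta_+(\mu)$ and $\frac N2$ both lie in this interval, where $\Psins$ is strictly decreasing, we get $\beta_+(\mu)>\frac N2\iff\Psins(\beta_+(\mu))<\Psins(\frac N2)\iff\mu<\mubar$.
\item Equivalence for $\mu=0$: $\beta_+(0)=N-2s>\frac N2$ and $0<\mubar$, so both sides of the equivalence are true.
\end{itemize}

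\emph{Explicit formula.} Substitute $\gamma=\frac N2$ into \eqref{eq:psi}. The four Gamma arguments become
\[
\frac{\gamma+2s}{2}=\frac{N+4s}{4},\quad \frac{N-\gamma}{2}=\frac N4,\quad \frac{\gamma}{2}=\frac N4,\quad \frac{N-2s-\gamma}{2}=\frac{N-4s}{4}.
\]
The two factors $\Gamma(\frac N4)$ cancel, which leaves
\[
\mubar=2^{2s}\,\frac{\Gamma\!\left(\frac{N+4s}{4}\right)}{\Gamma\!\left(\frac{N-4s}{4}\right)},
\]
and all arguments are positive because $N>4s$.

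The only point needing care is the domain bookkeeping: $\frac N2$ must lie in the decreasing branch of $\Psins$, and the endpoint $\mu=0$ must be handled through the convention $\beta_+(0)=N-2s$. No genuine obstacle is expected.
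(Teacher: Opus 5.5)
Your proposal is correct and follows the paper's proof essentially step for step. Like the paper, you compare $N-2s$ with $\frac N2$ in case (i), place $\frac N2$ on the strictly decreasing branch of $\Psins$ in case (ii), handle $\mu=0$ through the convention $\beta_+(0)=N-2s$, and get the closed form by cancelling the two $\Gamma(\frac N4)$ factors.
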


\begin{proof}
\emph{(i).} If $N\le4s$ then $N-2s\le\frac N2$. Since
$\beta_+(\mu)\in\big[\frac{N-2s}{2},N-2s\big)$ for $\mu\in(0,\Lam)$ and
$\beta_+(0)=N-2s$, we get $\beta_+(\mu)\le N-2s\le\frac N2$, with equality throughout
only if $\mu=0$ and $N=4s$.

\emph{(ii).} Let $N>4s$. Then $\frac{N-2s}{2}<\frac N2<N-2s$, so $\frac N2$ lies in
the interval on which $\Psins$ is strictly decreasing and
$\mubar=\Psins(N/2)$ is well defined. Evaluating \eqref{eq:psi} at $\gamma=N/2$,
\[
\Psins\Big(\frac N2\Big)
=2^{2s}\frac{\Gamma\!\left(\frac{N+4s}{4}\right)\Gamma\!\left(\frac{N}{4}\right)}
{\Gamma\!\left(\frac{N}{4}\right)\Gamma\!\left(\frac{N-4s}{4}\right)}
=2^{2s}\frac{\Gamma\!\left(\frac{N+4s}{4}\right)}{\Gamma\!\left(\frac{N-4s}{4}\right)} ,
\]
all four arguments being positive since $N>4s$. Since
$\beta_+(\mu)\ge\frac{N-2s}{2}$ and $\Psins$ is strictly decreasing on
$\big[\frac{N-2s}{2},N-2s\big)$,
\[
\beta_+(\mu)>\frac N2
\iff \Psins\big(\beta_+(\mu)\big)<\Psins\Big(\frac N2\Big)
\iff \mu<\mubar .
\]
(For $\mu=0$ read $\beta_+(0)=N-2s>\frac N2$ and $0<\mubar$.) Finally
$\frac{N-2s}{2}<\frac N2$ and strict monotonicity give
$\mubar=\Psins(N/2)<\Psins\big(\frac{N-2s}{2}\big)=\Lam$ and $\mubar>0$ by
Proposition~\ref{prop:psi}(i).
\end{proof}

\begin{remark}[consistency with the local case]\label{rem:local}
The right-hand sides of \eqref{eq:psi} and of the formula for $\mubar$ are defined for
every $s>0$ with $N>4s$. Setting $s=1$ gives
$\Psi_{N,1}(\gamma)=\gamma(N-2-\gamma)$, the homogeneity function of $-\Delta$, and
\[
\bar\mu_{N,1}=\frac{N(N-4)}{4}=\frac{(N-2)^2}{4}-1 ,
\]
the constant below which Jannelli \cite{Ja} obtained extremals for every
$0<\lambda<\lambda_1(\mu)$ and the condition $N>4s$ specialises to $N>4$.
\end{remark}

\begin{remark}[relation with the threshold of \cite{GRSZ}]\label{rem:gammacrit}
The threshold $\mubar$ coincides with the constant $\gamma_{\mathrm{crit}}(2s)$
introduced in \cite{GRSZ}, characterised there by
$\beta_+(\gamma_{\mathrm{crit}})=\frac N2$. Theorem~\ref{thm:mubar} supplies the
closed form and the comparison $\mubar<\Lam$.
\end{remark}

The preceding result yields the following $L^2$-integrability criterion.

\begin{corollary}\label{cor:L2}
Let $0<\mu<\Lam$ and let $U_\mu$ be as in Proposition~\ref{prop:Umu}. Then
$U_\mu\in L^2(\R^N)$ if and only if $N>4s$ and $\mu<\mubar$.
\end{corollary}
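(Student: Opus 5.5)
The plan is to reduce the question entirely to the radial integrability of the two-sided bound \eqref{eq:twoside}, and then to read off the answer from Theorem~\ref{thm:mubar}. By \eqref{eq:twoside}, $U_\mu$ is comparable, up to the constant $C$, to $\min\{|y|^{-\beta_-(\mu)},|y|^{-\beta_+(\mu)}\}$. Hence $U_\mu\in L^2(\R^N)$ if and only if this minimum is square integrable. Since $\beta_-<\beta_+$, the minimum equals $|y|^{-\beta_-}$ for $|y|\le1$ and $|y|^{-\beta_+}$ for $|y|\ge1$. We therefore split $\R^N$ into the unit ball and its complement.

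\emph{Near the origin.} In polar coordinates, $\int_{B_1}|y|^{-2\beta_-}\,dy$ is finite if and only if $2\beta_-<N$. This always holds, because $\beta_-(\mu)<\frac{N-2s}{2}<\frac N2$ by Proposition~\ref{prop:psi}(iii). So the local part never obstructs integrability.

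\emph{At infinity.} Similarly, $\int_{\R^N\setminus B_1}|y|^{-2\beta_+}\,dy<\infty$ if and only if $2\beta_+>N$, that is, $\beta_+(\mu)>\frac N2$. Combining the two regions, and using the lower and upper bounds in \eqref{eq:twoside} for the two directions of the equivalence, we get
\[
U_\mu\in L^2(\R^N)\iff \beta_+(\mu)>\tfrac N2 .
\]

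\emph{Conclusion via Theorem~\ref{thm:mubar}.} It remains to identify when $\beta_+(\mu)>\frac N2$ for $\mu\in(0,\Lam)$.
\begin{enumerate}
\item[$\bullet$] If $N\le4s$, Theorem~\ref{thm:mubar}(i) gives $\beta_+(\mu)\le\frac N2$ for every $\mu$. Equality can only occur at $\mu=0$, which is excluded here anyway, so $U_\mu\notin L^2$.
\item[$\bullet$] If $N>4s$, Theorem~\ref{thm:mubar}(ii) gives $\beta_+(\mu)>\frac N2$ exactly when $\mu<\mubar$.
\end{enumerate}
Together these yield the stated equivalence.

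There is no genuine obstacle; the only point requiring care is to use both inequalities in \eqref{eq:twoside}. The lower bound proves non-integrability when $\beta_+\le\frac N2$, including the borderline case $\beta_+=\frac N2$, where the tail integral diverges logarithmically. The upper bound proves integrability in the other case.
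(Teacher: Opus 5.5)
Your proposal is correct and follows the paper's proof. Both use the two-sided bound \eqref{eq:twoside} in polar coordinates split at $|y|=1$. The local part is always integrable because $2\beta_-<N$, and the tail is integrable iff $2\beta_+(\mu)>N$, which Theorem~\ref{thm:mubar} turns into the condition $N>4s$ and $\mu<\mubar$.
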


\begin{proof}
By \eqref{eq:twoside} and polar coordinates,
\[
\int_{|y|\le1}U_\mu^2\,dy\ \asymp\ \int_0^1 r^{N-1-2\beta_-}dr, \,
\int_{|y|\ge1}U_\mu^2\,dy\ \asymp\ \int_1^\infty r^{N-1-2\beta_+}dr ,
\]
with two-sided constants depending only on $N,s,\mu$. The first integral is finite
since $2\beta_-<N-2s<N$ by Proposition~\ref{prop:psi}. The second is finite if and
only if $2\beta_+>N$, which by Theorem~\ref{thm:mubar} happens exactly when $N>4s$
and $\mu<\mubar$. (Here $A\asymp B$ means that $cB\leq A\leq CB$ for some constants
$c,C>0$ depending only on $N,s,\mu$.)
\end{proof}

\subsection{Scaling and tail estimates}\label{sec:order}

The following estimates describe the scaling invariances and the tail behaviour of
$U_\mu$ needed in the subsequent calculations.

Throughout this section $0<\mu<\Lam$, $U:=U_\mu$ is as in Proposition~\ref{prop:Umu},
$\beta:=\beta_+(\mu)$, and
\[
U_\varepsilon(x):=\varepsilon^{-\frac{N-2s}{2}}\,U\!\left(\frac{x}{\varepsilon}\right), \, \varepsilon>0 .
\]

\begin{lemma}\label{lem:scaling}
For every $\varepsilon>0$,
\[
[U_\varepsilon]_s^2=[U]_s^2,\,
\int_{\R^N}\frac{U_\varepsilon^2}{|x|^{2s}}\,dx=\int_{\R^N}\frac{U^2}{|y|^{2s}}\,dy,
\,
\int_{\R^N}|U_\varepsilon|^{\cstar}dx=\int_{\R^N}|U|^{\cstar}dy .
\]
If moreover $U\in L^2(\R^N)$, then
$\displaystyle\int_{\R^N}U_\varepsilon^2\,dx=\varepsilon^{2s}\int_{\R^N}U^2\,dy$.
\end{lemma}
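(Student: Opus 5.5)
The plan is a direct change of variables $x=\varepsilon y$ in each integral, with $dx=\varepsilon^N\,dy$; each identity then reduces to an exponent count. Throughout we use $U_\varepsilon(\varepsilon y)=\varepsilon^{-\frac{N-2s}{2}}U(y)$.

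\emph{Gagliardo seminorm.} Substitute $x=\varepsilon y$ and $x'=\varepsilon y'$ in the double integral defining $[U_\varepsilon]_s^2$. The numerator $|U_\varepsilon(x)-U_\varepsilon(x')|^2$ produces the factor $\varepsilon^{-(N-2s)}$. The kernel $|x-x'|^{-N-2s}$ produces $\varepsilon^{-N-2s}$, and the two measures produce $\varepsilon^{2N}$. The total exponent is
\[
-(N-2s)-(N+2s)+2N=0 ,
\]
so $[U_\varepsilon]_s^2=[U]_s^2$.

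The space needs a brief check: $U_\varepsilon$ must lie in $\dHs$. Dilations of $C_c^\infty$ functions stay in $C_c^\infty$, and by the computation just made the dilation is an isometry for $[\,\cdot\,]_s$. It therefore extends to the completion, so $U_\varepsilon\in\dHs$.

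\emph{Hardy term.} The integrand $U_\varepsilon^2|x|^{-2s}$ gives $\varepsilon^{-(N-2s)}\varepsilon^{-2s}$, and $dx$ gives $\varepsilon^N$. The total exponent is $-(N-2s)-2s+N=0$.

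\emph{Critical term.} Since $\frac{N-2s}{2}\cdot\cstar=N$, the integrand gives $\varepsilon^{-N}$ and $dx$ gives $\varepsilon^N$. The exponent again cancels.

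\emph{$L^2$ norm.} When $U\in L^2(\R^N)$, the integrand gives $\varepsilon^{-(N-2s)}$ and $dx$ gives $\varepsilon^N$, so the net factor is $\varepsilon^{2s}$.

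All integrals are of nonnegative functions, so Tonelli justifies the substitutions without any finiteness assumption. The finiteness of the first three integrals is already guaranteed by $U\in\dHs$, the Hardy inequality (Theorem~\ref{thm:hardy}) and the Sobolev inequality (Theorem~\ref{thm:sobolev}).

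None of the steps is a real obstacle. The only subtle point is the membership $U_\varepsilon\in\dHs$, which the density and isometry argument above handles.
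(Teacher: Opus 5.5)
Your proof is correct and follows the paper's route. The paper also uses the substitution $x=\varepsilon y$ with an exponent count in each integral, writing out the Hardy and $L^2$ terms and citing the seminorm and $L^{\cstar}$ invariances as standard. Your additional check that $U_\varepsilon\in\dHs$, via the dilation isometry on $C_c^\infty(\R^N)$, is a harmless detail the paper leaves implicit; identifying the image of $U$ under the extended isometry with the pointwise dilation uses the embedding $\dHs\hookrightarrow L^{\cstar}$, which is routine.
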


\begin{proof}
All four identities follow from the substitution $x=\varepsilon y$. For the second,
\[
\int\frac{U_\varepsilon(x)^2}{|x|^{2s}}dx
=\varepsilon^{-(N-2s)}\int\frac{U(x/\varepsilon)^2}{|x|^{2s}}dx
=\varepsilon^{-(N-2s)}\varepsilon^{N-2s}\int\frac{U(y)^2}{|y|^{2s}}dy .
\]
For the fourth, $\varepsilon^{-(N-2s)}\varepsilon^{N}=\varepsilon^{2s}$. The first and
third are the standard invariances of $[\,\cdot\,]_s$ and of the $L^{\cstar}$ norm
under this scaling.
\end{proof}

Lemma~\ref{lem:scaling} displays the relevant scaling: the Hardy term at the
concentrating pole and the critical term are invariant, whereas $L^2$-type quantities
scale as $\varepsilon^{2s}$.

\begin{proposition}\label{prop:tails}
There is $C=C(N,s,\mu)\ge1$ such that for all $R\ge1$,
\begin{enumerate}
\item[(a)] if $2\beta>N$, then $U\in L^2(\R^N)$ and
$C^{-1}R^{N-2\beta}\le\displaystyle\int_{|y|>R}U^2\,dy\le C\,R^{N-2\beta}$,
\item[(b)] if $2\beta>N+1$, then $\displaystyle\int_{|y|\le R}|y|\,U^2\,dy\le C$,
\item[(c)] if $2\beta=N+1$, then $\displaystyle\int_{|y|\le R}|y|\,U^2\,dy\le C(1+\log R)$,
\item[(d)] if $N<2\beta<N+1$, then $\displaystyle\int_{|y|\le R}|y|\,U^2\,dy\le C\,R^{N+1-2\beta}$,
\item[(e)] $\displaystyle\int_{|y|>R}U^{\cstar}dy\le C\,R^{-\tau}$ with
$\tau:=\frac{2N\beta}{N-2s}-N>0$, if in addition $2\beta>N$, then
$\tau>\frac{2sN}{N-2s}>2s$.
\end{enumerate}
\end{proposition}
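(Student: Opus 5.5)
The plan is to reduce every item to a one-dimensional radial integral, using the two-sided bound \eqref{eq:twoside} from Proposition~\ref{prop:Umu}. For $|y|\ge1$ we have $\beta_-<\beta$, so $\min\{|y|^{-\beta_-},|y|^{-\beta}\}=|y|^{-\beta}$ and hence $U(y)\asymp|y|^{-\beta}$. For $|y|\le1$ we have $U(y)\asymp|y|^{-\beta_-}$, and this is the only information about $U$ that will be used. All constants will depend only on $N,s,\mu$ through $C$ in \eqref{eq:twoside} and through the surface measure of the sphere.

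For (a), polar coordinates give
\[
\int_{|y|>R}U^2\,dy\asymp\int_R^\infty r^{N-1-2\beta}\,dr=\frac{R^{N-2\beta}}{2\beta-N},
\]
which is finite exactly when $2\beta>N$ and yields both bounds. Membership $U\in L^2$ then follows from this together with Corollary~\ref{cor:L2}, or directly from the near-origin computation below.

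For (b)--(d), split $\int_{|y|\le R}|y|U^2$ into the parts $|y|\le1$ and $1<|y|\le R$. The inner part is bounded by $C\int_0^1 r^{N-2\beta_-}\,dr<\infty$, because $2\beta_-<N-2s<N+1$ by Proposition~\ref{prop:psi}. The outer part is comparable to $\int_1^R r^{N-2\beta}\,dr$, and we read off the three cases:
\begin{itemize}
\item if $2\beta>N+1$, the integral is bounded uniformly in $R$;
\item if $2\beta=N+1$, it equals $\log R$;
\item if $N<2\beta<N+1$, it equals $(R^{N+1-2\beta}-1)/(N+1-2\beta)\le CR^{N+1-2\beta}$.
\end{itemize}
In each case the constant from the inner part is absorbed into the right-hand side, since $R\ge1$.

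For (e), $U^{\cstar}\le C|y|^{-\beta\cstar}$ on $|y|>R\ge1$, so
\[
\int_{|y|>R}U^{\cstar}\le C\int_R^\infty r^{N-1-\beta\cstar}\,dr=CR^{-\tau},
\]
with $\tau=\beta\cstar-N=\frac{2N\beta}{N-2s}-N$. We need $\tau>0$, which follows from $\beta\ge\frac{N-2s}{2}$ and in fact $\beta>\frac{N-2s}{2}$ for $\mu<\Lam$. This gives $\beta\cstar>N$, and the integral converges. If moreover $2\beta>N$, then
\[
\tau>\frac{N^2}{N-2s}-N=\frac{2sN}{N-2s},
\]
and $\frac{2sN}{N-2s}>2s$ because $N>N-2s$.

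There is no real obstacle. The only care needed is to check the exponent bookkeeping, and to confirm that \eqref{eq:twoside} selects $|y|^{-\beta_+}$ at infinity and $|y|^{-\beta_-}$ near the origin, so that the inner contributions are finite.
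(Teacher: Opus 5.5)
Your proposal is correct and follows the paper's proof essentially step for step: you reduce each item via \eqref{eq:twoside} and polar coordinates to radial integrals, split at $|y|=1$ for (b)--(d) using $2\beta_-<N-2s<N+1$, and obtain $\tau>0$ from $\beta>\frac{N-2s}{2}$ and $\tau>\frac{2sN}{N-2s}$ from $2\beta>N$. One harmless slip: the ``near-origin computation below'' is the weighted integral $\int_0^1 r^{N-2\beta_-}\,dr$ from (b)--(d), whereas $L^2$-integrability near $0$ needs $\int_0^1 r^{N-1-2\beta_-}\,dr$; that integral is finite by the same reasoning ($2\beta_-<N$), which is what the paper uses when it appeals to Corollary~\ref{cor:L2}.
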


\begin{proof}
We write $\beta_-=N-2s-\beta$ and use \eqref{eq:twoside} together with polar coordinates,
all constants below depend only on $N,s,\mu$ using the constant $C$ of
\eqref{eq:twoside} and on the surface measure $\omega_{N-1}$ of $S^{N-1}$.

\emph{(a).} $\int_{|y|>R}U^2\,dy\le C^2\omega_{N-1}\int_R^\infty r^{N-1-2\beta}dr
=\frac{C^2\omega_{N-1}}{2\beta-N}R^{N-2\beta}$, finite since $2\beta>N$. The lower
bound is the same computation with $C^{-2}$. Integrability near the origin was shown
in the proof of Corollary~\ref{cor:L2}.

\emph{(b)-(d).} We split at $|y|=1$. On $\{|y|\le1\}$,
$\int_{|y|\le1}|y|U^2\le C^2\omega_{N-1}\int_0^1r^{N-2\beta_-}dr<\infty$ since
$2\beta_-<N-2s<N+1$. On $\{1\le|y|\le R\}$,
\[
\int_{1\le|y|\le R}|y|U^2\,dy\le C^2\omega_{N-1}\int_1^R r^{N-2\beta}dr,
\]
which equals $\frac{C^2\omega_{N-1}}{2\beta-N-1}\left(1-R^{N+1-2\beta}\right)$ if
$2\beta>N+1$, equals $C^2\omega_{N-1}\log R$ if $2\beta=N+1$, and is at most
$\frac{C^2\omega_{N-1}}{N+1-2\beta}R^{N+1-2\beta}$ if $2\beta<N+1$.

\emph{(e).} By \eqref{eq:twoside},
$\int_{|y|>R}U^{\cstar}dy\le C^{\cstar}\omega_{N-1}\int_R^\infty
r^{N-1-\cstar\beta}dr$. Convergence uses only the bound
$\beta>\frac{N-2s}{2}$ of Proposition~\ref{prop:psi}(iii),
\[
\cstar\beta=\frac{2N\beta}{N-2s}>\frac{2N}{N-2s}\cdot\frac{N-2s}{2}=N ,
\]
that is $\tau=\cstar\beta-N>0$, and then
$\int_R^\infty r^{N-1-\cstar\beta}dr=\tau^{-1}R^{-\tau}$.

Assume in addition that $2\beta>N$. Then
\[
\tau=\frac{2N\beta}{N-2s}-N>\frac{2N}{N-2s}\cdot\frac{N}{2}-N
=\frac{N^2-N(N-2s)}{N-2s}=\frac{2sN}{N-2s} ,
\]
and $\frac{2sN}{N-2s}>2s$ since $\frac{N}{N-2s}>1$.

\end{proof}

\subsection{Weighted $L^2$-asymptotics}\label{sec:weighted}

The tail estimates yield the following asymptotic estimate for integrals of
$U_\varepsilon^2$ against Lipschitz weights.

\begin{theorem}\label{thm:order}
Let $0<\mu<\Lam$ with $\beta=\beta_+(\mu)>\frac N2$, let $\rho>0$, and let
$W:\overline{B_\rho(0)}\to\R$ be Lipschitz with constant $L$. Then, as
$\varepsilon\to0^+$,
\begin{equation}\label{eq:order}
\int_{B_\rho(0)}W(x)\,U_\varepsilon(x)^2\,dx
=\varepsilon^{2s}\Big(W(0)\,\|U\|_{L^2(\R^N)}^2+E(\varepsilon)\Big),
\end{equation}
where, with $C=C(N,s,\mu)$ and $\rho$ fixed,
\[
|E(\varepsilon)|\ \le\ C\Big(|W(0)|\,\varepsilon^{2\beta-N}
+L\,\varepsilon\,h_\varepsilon\Big), \,
h_\varepsilon:=
\begin{cases}
1, & 2\beta>N+1,\\[1mm]
1+\log\frac1\varepsilon, & 2\beta=N+1,\\[1mm]
\varepsilon^{\,2\beta-N-1}, & N<2\beta<N+1 .
\end{cases}
\]
In particular $E(\varepsilon)\to 0$, and
$E(\varepsilon)=O\big(\varepsilon^{\kappa}\log\frac1\varepsilon\big)$ with
$\kappa=\min\{1,2\beta-N\}>0$.
\end{theorem}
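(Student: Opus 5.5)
The plan is to change variables $x=\varepsilon y$ and then split the weight as $W(x)=W(0)+(W(x)-W(0))$. By the scaling in Lemma~\ref{lem:scaling},
\[
\int_{B_\rho(0)}W\,U_\varepsilon^2\,dx=\varepsilon^{2s}\int_{|y|<\rho/\varepsilon}W(\varepsilon y)\,U(y)^2\,dy .
\]
Since $\beta>\frac N2$, Corollary~\ref{cor:L2} and Proposition~\ref{prop:tails}(a) give $U\in L^2(\R^N)$, so every integral below is finite. We therefore define $E(\varepsilon)$ by \eqref{eq:order}, namely
\[
E(\varepsilon)=\int_{|y|<\rho/\varepsilon}W(\varepsilon y)U^2\,dy-W(0)\|U\|_{L^2}^2 ,
\]
and write it as $E=E_1+E_2$ with
\[
E_1:=-W(0)\int_{|y|\ge\rho/\varepsilon}U^2\,dy,\qquad
E_2:=\int_{|y|<\rho/\varepsilon}\big(W(\varepsilon y)-W(0)\big)U^2\,dy .
\]

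\emph{The term $E_1$.} We may take $\varepsilon\le\rho$, so that $R:=\rho/\varepsilon\ge1$. Proposition~\ref{prop:tails}(a) then gives
\[
|E_1|\le C|W(0)|(\rho/\varepsilon)^{N-2\beta}=C\rho^{N-2\beta}|W(0)|\,\varepsilon^{2\beta-N}.
\]
The factor $\rho^{N-2\beta}$ is absorbed into the constant, since $\rho$ is fixed.

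\emph{The term $E_2$.} The Lipschitz bound gives $|W(\varepsilon y)-W(0)|\le L\varepsilon|y|$ for $|y|<\rho/\varepsilon$, because then $\varepsilon y\in B_\rho(0)$. Hence
\[
|E_2|\le L\varepsilon\int_{|y|\le\rho/\varepsilon}|y|\,U^2\,dy .
\]
We apply Proposition~\ref{prop:tails}(b)--(d) with $R=\rho/\varepsilon$, separately in each regime.
\begin{itemize}
\item If $2\beta>N+1$, the integral is bounded by $C$.
\item If $2\beta=N+1$, it is at most $C(1+\log\rho+\log\frac1\varepsilon)\le C'(1+\log\frac1\varepsilon)$ for $\varepsilon$ small.
\item If $N<2\beta<N+1$, it is at most $C\rho^{N+1-2\beta}\varepsilon^{2\beta-N-1}$.
\end{itemize}
In each case this is $C\,h_\varepsilon$, which gives the stated bound $|E_2|\le CL\varepsilon h_\varepsilon$.

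\emph{Final rate.} The term $\varepsilon^{2\beta-N}$ is $O(\varepsilon^\kappa)$. For the other term, $\varepsilon h_\varepsilon$ equals $\varepsilon$, $\varepsilon(1+\log\frac1\varepsilon)$ or $\varepsilon^{2\beta-N}$ in the three regimes. Each of these is $O(\varepsilon^\kappa\log\frac1\varepsilon)$ with $\kappa=\min\{1,2\beta-N\}>0$, and all of them tend to $0$.

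The argument is routine once Proposition~\ref{prop:tails} is available. The only point needing care is bookkeeping: one must allow the constant to depend on the fixed $\rho$, even though the statement writes $C=C(N,s,\mu)$. The restriction to $\varepsilon\le\min\{\rho,1/2\}$, used so that $R\ge1$ and the logarithmic bound applies, costs nothing because the claim is asymptotic as $\varepsilon\to0^+$.
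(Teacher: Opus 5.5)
Your proposal is correct and matches the paper's proof: you substitute $x=\varepsilon y$, bound the tail term $-W(0)\int_{|y|>\rho/\varepsilon}U^2$ with Proposition~\ref{prop:tails}(a), and control the Lipschitz remainder by $L\varepsilon\int_{|y|\le\rho/\varepsilon}|y|\,U^2$ using Proposition~\ref{prop:tails}(b)--(d). Like the paper, you let the constants depend on the fixed $\rho$.
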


\begin{proof}
By Corollary~\ref{cor:L2}, $U\in L^2(\R^N)$. Substitute $x=\varepsilon y$ and use the
identity $U_\varepsilon(\varepsilon y)^2=\varepsilon^{-(N-2s)}U(y)^2$, so that
\[
\int_{B_\rho(0)}W(x)U_\varepsilon(x)^2dx
=\varepsilon^{2s}\int_{B_{\rho/\varepsilon}(0)}W(\varepsilon y)\,U(y)^2\,dy .
\]
Split the right-hand integral and we get,
\[
\int_{B_{\rho/\varepsilon}}W(\varepsilon y)U^2
=W(0)\int_{\R^N}U^2
-W(0)\int_{|y|>\rho/\varepsilon}U^2
+\int_{B_{\rho/\varepsilon}}\big(W(\varepsilon y)-W(0)\big)U^2 ,
\]
so that $E(\varepsilon)=-W(0)\int_{|y|>\rho/\varepsilon}U^2
+\int_{B_{\rho/\varepsilon}}(W(\varepsilon y)-W(0))U^2$.

For the first term, Proposition~\ref{prop:tails}(a) with $R=\rho/\varepsilon\ge1$
(true for $\varepsilon\le\rho$) gives
$\int_{|y|>\rho/\varepsilon}U^2\le C\rho^{N-2\beta}\varepsilon^{2\beta-N}$.

For the second term, $|W(\varepsilon y)-W(0)|\le L\varepsilon|y|$ for
$|y|\le\rho/\varepsilon$, so
\[
\Big|\int_{B_{\rho/\varepsilon}}(W(\varepsilon y)-W(0))U^2\Big|
\le L\,\varepsilon\int_{|y|\le\rho/\varepsilon}|y|\,U(y)^2dy ,
\]
and Proposition~\ref{prop:tails}(b)-(d) with $R=\rho/\varepsilon$ bounds the last
integral by $C$, by $C(1+\log\frac{\rho}{\varepsilon})$, or by
$C(\rho/\varepsilon)^{N+1-2\beta}$ in the three cases, which is the asserted
$h_\varepsilon$ up to constants depending on $\rho$. Since $2\beta>N$ and
$\varepsilon h_\varepsilon\le C\varepsilon^{\min\{1,2\beta-N\}}(1+\log\frac1\varepsilon)$
in all three cases, the last assertion follows.
\end{proof}

Applying Theorem~\ref{thm:order} to the effective weight near $a_j$ gives the
following.

\begin{corollary}\label{cor:Bj}
Assume \eqref{eq:H}, let $j\in\{1,\dots,k\}$ with $\mu_j>0$ and
$\beta_j:=\beta_+(\mu_j)>\frac N2$, and set
\[
U_{\varepsilon,j}(x):=\varepsilon^{-\frac{N-2s}{2}}U_{\mu_j}\!\Big(\frac{x-a_j}{\varepsilon}\Big),
\,
B_j(\lambda,\mathbf a,\boldsymbol\mu):=\lambda+\sum_{i\neq j}\frac{\mu_i}{|a_i-a_j|^{2s}} .
\]
Fix $0<\rho<\frac{d_j}{2}$. Then, as $\varepsilon\to0^+$,
\[
\int_{B_\rho(a_j)}\Big(\lambda+\sum_{i\neq j}\frac{\mu_i}{|x-a_i|^{2s}}\Big)
U_{\varepsilon,j}(x)^2\,dx
=B_j(\lambda,\mathbf a,\boldsymbol\mu)\,\|U_{\mu_j}\|_{L^2(\R^N)}^2\,\varepsilon^{2s}
+O\Big(\varepsilon^{2s+\kappa_j}\log\tfrac1\varepsilon\Big),
\]
with $\kappa_j=\min\{1,2\beta_j-N\}>0$. Moreover, by Lemma~\ref{lem:scaling},
\[
[U_{\varepsilon,j}]_s^2-\mu_j\int_{\R^N}\frac{U_{\varepsilon,j}^2}{|x-a_j|^{2s}}dx
=[U_{\mu_j}]_s^2-\mu_j\int_{\R^N}\frac{U_{\mu_j}^2}{|y|^{2s}}dy
=S_{\mu_j}\,\|U_{\mu_j}\|_{L^{\cstar}}^{2},
\]
for every $\varepsilon>0$.
\end{corollary}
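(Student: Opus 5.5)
The plan is to apply Theorem~\ref{thm:order} after translating the pole $a_j$ to the origin, with the weight
\[
W(y):=\lambda+\sum_{i\neq j}\frac{\mu_i}{|y+a_j-a_i|^{2s}},\qquad y\in\overline{B_\rho(0)} .
\]
The change of variables $x=a_j+y$ maps $B_\rho(a_j)$ onto $B_\rho(0)$ and turns $U_{\varepsilon,j}(a_j+y)$ into $U_\varepsilon(y)$ with $U=U_{\mu_j}$. Hence the left-hand side equals $\int_{B_\rho(0)}W\,U_\varepsilon^2\,dy$.

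We first check that $W$ is Lipschitz on $\overline{B_\rho(0)}$. Since $\rho<d_j/2$, for $|y|\le\rho$ and $i\neq j$ we have
\[
|y+a_j-a_i|\ge|a_i-a_j|-\rho\ge d_j-\rho>d_j/2 .
\]
Each map $y\mapsto|y+a_j-a_i|^{-2s}$ is therefore smooth on a neighbourhood of the closed ball. Its gradient is bounded by $2s(d_j/2)^{-2s-1}$, so $W$ is Lipschitz with constant $L\le 2s(d_j/2)^{-2s-1}\sum_{i\neq j}\mu_i$. Moreover $W(0)=\lambda+\sum_{i\neq j}\mu_i|a_j-a_i|^{-2s}=B_j(\lambda,\mathbf a,\boldsymbol\mu)$.

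Theorem~\ref{thm:order} applies because $0<\mu_j<\Lam$ by \eqref{eq:H} and $\beta_j>\frac N2$ by hypothesis. It gives the value $\varepsilon^{2s}\big(B_j\|U_{\mu_j}\|_{L^2}^2+E(\varepsilon)\big)$ with $E(\varepsilon)=O(\varepsilon^{\kappa_j}\log\frac1\varepsilon)$. This is exactly the first claim. The constants depend on $\rho$, $\lambda$, $\mathbf a$ and $\boldsymbol\mu$, which are all fixed.

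For the second identity, translation invariance of $[\,\cdot\,]_s$ and of the Lebesgue integral reduces everything to $U_\varepsilon$ centred at $0$. Lemma~\ref{lem:scaling} then shows that both $[U_\varepsilon]_s^2$ and the Hardy integral are independent of $\varepsilon$ and equal to their values at $\varepsilon=1$. Finally, \eqref{eq:weakUmu} with $\psi=U_{\mu_j}$ gives
\[
[U_{\mu_j}]_s^2-\mu_j\int\frac{U_{\mu_j}^2}{|y|^{2s}}=\|U_{\mu_j}\|_{L^{\cstar}}^{\cstar}=\|U_{\mu_j}\|_{L^{\cstar}}^{\cstar-2}\,\|U_{\mu_j}\|_{L^{\cstar}}^{2}=S_{\mu_j}\|U_{\mu_j}\|_{L^{\cstar}}^2 ,
\]
where the last step uses \eqref{eq:SmuUmu}.

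The only real point is verifying the Lipschitz bound via the separation $\rho<d_j/2$. The rest is direct substitution into results already proved.
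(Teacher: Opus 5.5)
Your proposal is correct and matches the paper's proof: translate to $a_j$, use $\rho<d_j/2$ to see that $W=\lambda+\sum_{i\neq j}\mu_i|x-a_i|^{-2s}$ is smooth (hence Lipschitz) on the closed ball with value $B_j(\lambda,\mathbf a,\boldsymbol\mu)$ at the centre, and apply Theorem~\ref{thm:order}. The second identity follows, as in the paper, from Lemma~\ref{lem:scaling} and the fact that $U_{\mu_j}$ attains $S_{\mu_j}$; you derive that fact explicitly via \eqref{eq:weakUmu} and \eqref{eq:SmuUmu}.
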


\begin{proof}
Set $W(x):=\lambda+\sum_{i\neq j}\mu_i|x-a_i|^{-2s}$ for
$x\in\overline{B_\rho(a_j)}$. Since $\rho<\frac{d_j}{2}$, we have
$|x-a_i|\ge|a_i-a_j|-\rho\ge d_j-\rho>\frac{d_j}{2}$ for every $i\neq j$ and every
$x\in\overline{B_\rho(a_j)}$, so $W$ is smooth on $\overline{B_\rho(a_j)}$, in
particular Lipschitz there, and $W(a_j)=B_j(\lambda,\mathbf a,\boldsymbol\mu)$. Apply
Theorem~\ref{thm:order} after the translation $x\mapsto x-a_j$. The last part follows from
Lemma~\ref{lem:scaling} together with the fact that $U_{\mu_j}$ attains $S_{\mu_j}$.
\end{proof}

\section{Concentrating test functions and the geometric threshold}%
\label{sec:concentrating}
We now truncate the concentrating family so that it belongs to $X$. The error introduced by the cutoff is of lower order than the leading term in Corollary~\ref{cor}. This yields the expansion of the Rayleigh quotient along the concentrating family and the corresponding geometric threshold $\lambda_{\mathrm{geo}}$.

Throughout this section we fix $j\in\{1,\dots,k\}$ with
\[
\mu_j\in(0,\Lam),\, \beta_j:=\beta_+(\mu_j)>\frac N2 ,
\]
we abbreviate $U^{(j)}:=U_{\mu_j}$ and set
\[
U_{\varepsilon,j}(x):=\varepsilon^{-\frac{N-2s}{2}}\,
U^{(j)}\!\Big(\frac{x-a_j}{\varepsilon}\Big),\, \varepsilon>0 ,
\]
and we fix a radius $\rho$ and a cutoff $\eta$ with
\begin{equation}\label{eq:cutoff}
0<\rho<\min\Big\{\frac{d_j}{2},\ \operatorname{dist}(a_j,\partial\Omega)\Big\},\,
\eta\in C^\infty_c\big(B_\rho(a_j)\big),\quad 0\le\eta\le1,\quad
\eta\equiv1\ \text{on}\ B_{\rho/2}(a_j) .
\end{equation}
By Corollary~\ref{cor:L2}, $U^{(j)}\in L^2(\R^N)$, and by Lemma~\ref{lem:scaling},
$U_{\varepsilon,j}\in\dHs\cap L^2(\R^N)$ for every $\varepsilon>0$.

\subsection{Cutoff estimates and the cutoff identity}\label{sec:cutoff}

We first record the elementary cutoff bounds and the exact localization identity for
the Gagliardo seminorm.

\begin{lemma}\label{lem:elem}
Let $\theta:\R^N\to[0,1]$ be Lipschitz with constant $L>0$. Then
\[
\sup_{y\in\R^N}\int_{\R^N}\frac{\big(\theta(x)-\theta(y)\big)^2}{|x-y|^{N+2s}}\,dx
\ \le\ c_{N,s}\,L^{2s},\,
c_{N,s}:=\omega_{N-1}\Big(\frac{1}{2-2s}+\frac{1}{2s}\Big),
\]
where $\omega_{N-1}=\mathcal H^{N-1}(S^{N-1})$.
\end{lemma}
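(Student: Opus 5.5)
Fix $y\in\R^N$ and split the $x$-integral at the scale $r:=1/L$. The Lipschitz bound controls the near region and the bound $0\le\theta\le1$ controls the far region. Both regions are handled by polar coordinates centred at $y$, so the bound is uniform in $y$.

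\emph{Near region $|x-y|<r$.} Since $|\theta(x)-\theta(y)|\le L|x-y|$,
\[
\int_{|x-y|<r}\frac{(\theta(x)-\theta(y))^2}{|x-y|^{N+2s}}\,dx
\le L^2\int_{|z|<r}|z|^{2-N-2s}\,dz
=L^2\omega_{N-1}\int_0^r t^{1-2s}\,dt
=\frac{\omega_{N-1}}{2-2s}\,L^2r^{2-2s}.
\]
The radial integral converges at $0$ because $s<1$.

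\emph{Far region $|x-y|\ge r$.} Both values of $\theta$ lie in $[0,1]$, so $(\theta(x)-\theta(y))^2\le1$. Hence
\[
\int_{|x-y|\ge r}\frac{(\theta(x)-\theta(y))^2}{|x-y|^{N+2s}}\,dx
\le\omega_{N-1}\int_r^\infty t^{-1-2s}\,dt=\frac{\omega_{N-1}}{2s}\,r^{-2s}.
\]
This radial integral converges at infinity because $s>0$.

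\emph{Choice of $r$.} Take $r=1/L$. Then $L^2r^{2-2s}=L^{2s}$ and $r^{-2s}=L^{2s}$. Adding the two estimates gives
\[
\omega_{N-1}\Big(\frac1{2-2s}+\frac1{2s}\Big)L^{2s}=c_{N,s}L^{2s},
\]
independently of $y$. Taking the supremum over $y$ proves the lemma.

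The only point needing care is the choice of the splitting radius $r=1/L$. It balances the two contributions so that each becomes exactly $L^{2s}$, which reproduces the stated constant $c_{N,s}$ with no extra factor.
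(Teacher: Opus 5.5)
Your proof is correct and is essentially the paper's argument. The paper bounds $(\theta(x)-\theta(y))^2\le\min\{1,L^2|x-y|^2\}$, integrates in polar coordinates about $y$, and splits at the same radius $1/L$, which produces exactly the two terms $\frac{\omega_{N-1}}{2-2s}L^{2s}$ and $\frac{\omega_{N-1}}{2s}L^{2s}$ that you computed.
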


\begin{proof}
Since $0\le\theta\le1$ and $\theta$ is $L$-Lipschitz,
$(\theta(x)-\theta(y))^2\le\min\{1,L^2|x-y|^2\}$. Passing to polar coordinates
centred at $y$ and writing $t=|x-y|$, $T=1/L$,
\[
\int_{\R^N}\frac{\min\{1,L^2|x-y|^2\}}{|x-y|^{N+2s}}dx
=\omega_{N-1}\Big(L^2\!\int_0^{T}\!t^{1-2s}dt+\int_T^\infty\! t^{-1-2s}dt\Big)
=\omega_{N-1}\Big(\frac{L^2T^{2-2s}}{2-2s}+\frac{T^{-2s}}{2s}\Big),
\]
both integrals converging since $0<s<1$. Now $L^2T^{2-2s}=T^{-2s}=L^{2s}$.
\end{proof}

The preceding estimate also gives the following cutoff bound.

\begin{lemma}\label{lem:product}
Let $\theta:\R^N\to[0,1]$ be Lipschitz with constant $L$ and let
$u\in\dHs\cap L^2(\R^N)$. Then $\theta u\in\dHs$ and
\[
[\theta u]_s^2\ \le\ 2\,[u]_s^2+C_{N,s}\,c_{N,s}\,L^{2s}\,\|u\|_{L^2(\R^N)}^2 .
\]
\end{lemma}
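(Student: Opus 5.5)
The plan is to split the difference quotient of $\theta u$ pointwise and then integrate, using Lemma~\ref{lem:elem} for the part that involves $\theta$. For $x,y\in\R^N$ we write
\[
\theta(x)u(x)-\theta(y)u(y)=\theta(x)\big(u(x)-u(y)\big)+u(y)\big(\theta(x)-\theta(y)\big).
\]
From $(A+B)^2\le 2A^2+2B^2$ and $0\le\theta\le1$ we get
\[
\big|\theta(x)u(x)-\theta(y)u(y)\big|^2\le 2\,|u(x)-u(y)|^2+2\,u(y)^2\big(\theta(x)-\theta(y)\big)^2 .
\]

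We multiply by $\frac{C_{N,s}}{2}|x-y|^{-N-2s}$ and integrate over $\R^{2N}$. The first term gives exactly $2[u]_s^2$. For the second term we use Tonelli to integrate in $x$ first for fixed $y$. Lemma~\ref{lem:elem} bounds the inner integral by $c_{N,s}L^{2s}$, uniformly in $y$. This leaves
\[
\frac{C_{N,s}}{2}\cdot 2\,c_{N,s}L^{2s}\int_{\R^N}u(y)^2\,dy=C_{N,s}\,c_{N,s}\,L^{2s}\,\|u\|_{L^2(\R^N)}^2 ,
\]
which is precisely the constant in the statement. The degenerate case $L=0$ means $\theta$ is constant, and then the bound holds trivially.

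The one point needing care is membership $\theta u\in\dHs$, since $\dHs$ is defined as a completion and finiteness of the seminorm alone does not place a function in the space. Here $u\in L^2$ and $\theta u\in L^2$, so $\theta u$ lies in the inhomogeneous space $H^s(\R^N)$, which is where I would anchor the argument. The estimate shows $[\theta u]_s<\infty$. Functions in $H^s(\R^N)$ can be approximated by $C_c^\infty$ functions in the $H^s$ norm, which dominates $[\cdot]_s$. Hence $\theta u$ is a $[\cdot]_s$-limit of test functions that also converges in $L^2$, and it therefore represents an element of $\dHs$.

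One could also reach the same conclusion by applying the inequality to $\theta u_n$ with $u_n\to u$. In either approach this membership step is the only mild obstacle. The inequality itself is a direct computation.
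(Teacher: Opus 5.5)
Your proof is correct and follows the paper's argument: the same pointwise splitting of $\theta(x)u(x)-\theta(y)u(y)$, the bound $(A+B)^2\le 2A^2+2B^2$ together with $0\le\theta\le1$, and Tonelli plus Lemma~\ref{lem:elem} on the second term, which gives exactly the constant $C_{N,s}c_{N,s}L^{2s}$. Your extra justification that $\theta u\in\dHs$, via $\theta u\in H^s(\R^N)$ and the density of $C_c^\infty$, spells out a step that the paper takes as immediate from the finiteness of the seminorm.
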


\begin{proof}
From $\theta(x)u(x)-\theta(y)u(y)=\theta(x)\big(u(x)-u(y)\big)+u(y)\big(\theta(x)-\theta(y)\big)$
and $(\alpha+\beta)^2\le2\alpha^2+2\beta^2$ we get, pointwise,
\[
\big(\theta(x)u(x)-\theta(y)u(y)\big)^2\le 2\big(u(x)-u(y)\big)^2
+2\,u(y)^2\big(\theta(x)-\theta(y)\big)^2 ,
\]
using $0\le\theta\le1$. Multiplying by $\tfrac{C_{N,s}}{2}|x-y|^{-N-2s}$, integrating
over $\R^{2N}$ and applying Tonelli's theorem and Lemma~\ref{lem:elem} to the second
term gives the stated bound which is finite. Hence $\theta u\in\dHs$.
\end{proof}

The rescaled family satisfies the corresponding translated equation.

\begin{lemma}\label{lem:eqscaled}
For every $\varepsilon>0$,
\begin{equation}\label{eq:weakUeps}
\mathcal E(U_{\varepsilon,j},\varphi)
-\mu_j\int_{\R^N}\frac{U_{\varepsilon,j}\,\varphi}{|x-a_j|^{2s}}\,dx
=\int_{\R^N}U_{\varepsilon,j}^{\cstar-1}\varphi\,dx \, \text{for every }\varphi\in\dHs .
\end{equation}
\end{lemma}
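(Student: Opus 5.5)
The plan is to deduce \eqref{eq:weakUeps} from the unscaled equation \eqref{eq:weakUmu} in Proposition~\ref{prop:Umu} by a change of variables. Fix $\varepsilon>0$ and $\varphi\in\dHs$, and define the inverse-scaled, translated test function
\[
\psi(y):=\varepsilon^{\frac{N-2s}{2}}\varphi(a_j+\varepsilon y).
\]
The seminorm $[\,\cdot\,]_s$ is invariant under translations and under the scaling $v\mapsto\varepsilon^{\frac{N-2s}{2}}v(\varepsilon\,\cdot)$, as in Lemma~\ref{lem:scaling}. This invariance holds on $C^\infty_c$ and passes to the completion, so $\psi\in\dHs$. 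We then insert $\psi$ into \eqref{eq:weakUmu} with $\mu=\mu_j$ and $U_{\mu_j}=U^{(j)}$.

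Next we transform each of the three terms by the substitution $x=a_j+\varepsilon y$ (and, for the double integral, also $x'=a_j+\varepsilon y'$).

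\emph{Gagliardo term.} We have $dx\,dx'=\varepsilon^{2N}dy\,dy'$ and $|x-x'|^{-N-2s}=\varepsilon^{-N-2s}|y-y'|^{-N-2s}$. The numerator is
\[
(U_{\varepsilon,j}(x)-U_{\varepsilon,j}(x'))(\varphi(x)-\varphi(x'))
=\varepsilon^{-\frac{N-2s}{2}}\varepsilon^{-\frac{N-2s}{2}}(U(y)-U(y'))(\psi(y)-\psi(y')).
\]
The total power of $\varepsilon$ is $2N-(N+2s)-(N-2s)=0$, so $\mathcal E(U_{\varepsilon,j},\varphi)=\mathcal E(U^{(j)},\psi)$.

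\emph{Hardy term.} Here $|x-a_j|^{-2s}=\varepsilon^{-2s}|y|^{-2s}$ and $dx=\varepsilon^N dy$. The product of the two functions contributes $\varepsilon^{-(N-2s)}$, so the total power is $N-2s-(N-2s)=0$.

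\emph{Critical term.} Since $U_{\varepsilon,j}^{\cstar-1}(x)=\varepsilon^{-\frac{N-2s}{2}(\cstar-1)}U(y)^{\cstar-1}$ and $\frac{N-2s}{2}(\cstar-1)=\frac{N+2s}{2}$, the exponent is $N-\frac{N+2s}{2}-\frac{N-2s}{2}=0$.

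Hence all three sides of \eqref{eq:weakUmu} for $\psi$ coincide with the corresponding sides of \eqref{eq:weakUeps} for $\varphi$, which proves the identity.

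The only point needing care is justification. The integrals converge absolutely: the Hardy term by Theorem~\ref{thm:hardy} and Cauchy--Schwarz, the critical term by H\"older and Theorem~\ref{thm:sobolev}, and $\mathcal E$ as noted in Subsection~\ref{sec:spaces}. The changes of variables are therefore legitimate by Fubini and Tonelli. No density argument is needed, since $\psi\in\dHs$ directly.
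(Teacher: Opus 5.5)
Your proof is correct and follows the paper's argument: you use the same test function $\psi(y)=\varepsilon^{\frac{N-2s}{2}}\varphi(a_j+\varepsilon y)$ and check, via $x=a_j+\varepsilon y$, that each of the three terms is scale-invariant. The only cosmetic difference is that you verify the invariance of $\mathcal E$ by substituting directly in the double integral, whereas the paper deduces it from this map being a linear isometry of $\dHs$.
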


\begin{proof}
Let $T_\varepsilon\varphi(y):=\varepsilon^{\frac{N-2s}{2}}\varphi(a_j+\varepsilon y)$,
so that $T_\varepsilon U_{\varepsilon,j}=U^{(j)}$ and $T_\varepsilon$ is a linear
bijection of $\dHs$ with $[T_\varepsilon\varphi]_s=[\varphi]_s$, being a linear
isometry it preserves the bilinear form. Therefore,
$\mathcal E(U_{\varepsilon,j},\varphi)=\mathcal E(U^{(j)},T_\varepsilon\varphi)$.
For the remaining two integrals substitute $x=a_j+\varepsilon y$ and put
$\psi=T_\varepsilon\varphi$. The Hardy integrand picks up the factor
$\varepsilon^{-\frac{N-2s}{2}}\cdot\varepsilon^{-\frac{N-2s}{2}}\cdot
\varepsilon^{-2s}\cdot\varepsilon^{N}=1$, and the critical one the factor
$\varepsilon^{-\frac{N-2s}{2}\cstar}\varepsilon^{N}=1$ since
$\frac{N-2s}{2}\cstar=N$. Hence both sides of \eqref{eq:weakUeps} coincide with the
corresponding sides of \eqref{eq:weakUmu} evaluated at $\psi$, and
\eqref{eq:weakUmu} applies since $\psi\in\dHs$.
\end{proof}

The following identity avoids a direct comparison of $[\eta U_{\varepsilon,j}]_s$
with $[U_{\varepsilon,j}]_s$. It gives the quadratic form corresponding to the pole $a_j$ directly, so no separate estimate of the cutoff Hardy term is required.
\begin{lemma}[cutoff identity]\label{lem:identity}
Let $\theta:\R^N\to[0,1]$ be Lipschitz with constant $L$ and let
$u\in\dHs\cap L^2(\R^N)$. Then $\theta u,\ \theta^2u\in\dHs$ and
\begin{equation}\label{eq:identity}
[\theta u]_s^2=\mathcal E\big(u,\theta^2u\big)+R_\theta(u),\,
R_\theta(u):=\frac{C_{N,s}}{2}\iint_{\R^{2N}}
\frac{u(x)\,u(y)\,\big(\theta(x)-\theta(y)\big)^2}{|x-y|^{N+2s}}\,dx\,dy ,
\end{equation}
all three quantities being finite, and
\begin{equation}\label{eq:Rbound}
\big|R_\theta(u)\big|\ \le\ \frac{C_{N,s}}{2}\,c_{N,s}\,L^{2s}\,\|u\|^2_{L^2(\R^N)} .
\end{equation}
\end{lemma}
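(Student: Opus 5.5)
The identity will follow from a pointwise algebraic identity, integrated against the kernel; the only delicate point is justifying the splitting of the integrals.

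\emph{Membership in $\dHs$.} By Lemma~\ref{lem:product}, $\theta u\in\dHs$. The function $\theta^2$ takes values in $[0,1]$ and is Lipschitz with constant $2L$, since $|\theta^2(x)-\theta^2(y)|=|\theta(x)+\theta(y)|\,|\theta(x)-\theta(y)|\le 2L|x-y|$. Applying Lemma~\ref{lem:product} again gives $\theta^2u\in\dHs$. Hence $\mathcal E(u,\theta^2u)$ is finite by Cauchy--Schwarz.

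\emph{Pointwise identity.} Write $a=u(x)$, $b=u(y)$, $p=\theta(x)$, $q=\theta(y)$. Expanding both sides shows
\[
(pa-qb)^2=(a-b)(p^2a-q^2b)+ab(p-q)^2 .
\]
Indeed, the right side equals $p^2a^2-q^2ab-p^2ab+q^2b^2+ab(p^2-2pq+q^2)=p^2a^2-2pqab+q^2b^2$.

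\emph{Integration.} Multiply the identity by $\frac{C_{N,s}}{2}|x-y|^{-N-2s}$ and integrate over $\R^{2N}$. The left side gives $[\theta u]_s^2$, and the first term on the right gives $\mathcal E(u,\theta^2u)$. Both integrals converge absolutely: the first trivially, the second by Cauchy--Schwarz on $\R^{2N}$. So the splitting is legitimate once we know the last term is absolutely integrable.

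\emph{Bound on the remainder.} This absolute integrability is the only real step. Use $|u(x)u(y)|\le\frac12\big(u(x)^2+u(y)^2\big)$ and the symmetry of the integrand in $(x,y)$. By Tonelli,
\[
\iint\frac{|u(x)u(y)|(\theta(x)-\theta(y))^2}{|x-y|^{N+2s}}\le\int u(y)^2\Big(\int\frac{(\theta(x)-\theta(y))^2}{|x-y|^{N+2s}}dx\Big)dy\le c_{N,s}L^{2s}\|u\|_{L^2}^2 ,
\]
where the last step is Lemma~\ref{lem:elem}. This gives both the finiteness of $R_\theta(u)$ and the bound \eqref{eq:Rbound}, and the identity \eqref{eq:identity} then follows from linearity of the integral.
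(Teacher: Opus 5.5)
Your proof is correct and follows the paper's argument essentially step for step. Both use Lemma~\ref{lem:product} for membership (you usefully make explicit that $\theta^2$ is $2L$-Lipschitz) and the same pointwise algebraic identity. Both get absolute convergence of $\mathcal E(u,\theta^2u)$ by Cauchy--Schwarz, and of the remainder by $|u(x)u(y)|\le\frac12(u(x)^2+u(y)^2)$, symmetry, Tonelli and Lemma~\ref{lem:elem}, which gives \eqref{eq:Rbound} at the same time.
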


\begin{proof}
Both $\theta$ and $\theta^2$ take values in $[0,1]$ and are Lipschitz, so
$\theta u,\theta^2u\in\dHs$ by Lemma~\ref{lem:product}. For real numbers
$a,b,p,q$ one has the algebraic identity
\begin{equation}\label{eq:algebraic}
(ap-bq)^2=(p-q)\big(a^2p-b^2q\big)+pq\,(a-b)^2 ,
\end{equation}
as one checks by expanding both sides. The left side is $a^2p^2-2abpq+b^2q^2$ and
the right side is $a^2p^2+b^2q^2-(a^2+b^2)pq+(a^2+b^2)pq-2abpq$.

Before integrating, we check that the two terms on the right of
\eqref{eq:algebraic} produce absolutely convergent integrals. Note that $u$ is not
assumed to have a sign, so positivity is not used in this estimate. For the second term,
$|u(x)u(y)|\le\frac12\big(u(x)^2+u(y)^2\big)$. The kernel
$\big(\theta(x)-\theta(y)\big)^2|x-y|^{-N-2s}$ is symmetric in $(x,y)$, so by
Tonelli's theorem and Lemma~\ref{lem:elem}
\begin{equation}\label{eq:abscv}
\begin{aligned}
\iint_{\R^{2N}}\frac{|u(x)u(y)|\big(\theta(x)-\theta(y)\big)^2}{|x-y|^{N+2s}}\,dx\,dy
&\le\int_{\R^N}u(y)^2\Big[\int_{\R^N}
\frac{\big(\theta(x)-\theta(y)\big)^2}{|x-y|^{N+2s}}\,dx\Big]dy\\
&\le c_{N,s}\,L^{2s}\,\|u\|^2_{L^2(\R^N)},
\end{aligned}
\end{equation}
which is finite, $L$ denoting the Lipschitz constant of $\theta$. For the first term,
the Cauchy-Schwarz inequality on $\R^{2N}$ gives
\[
\iint\frac{|u(x)-u(y)|\,\big|\theta^2u(x)-\theta^2u(y)\big|}{|x-y|^{N+2s}}\,dx\,dy
\ \le\ \frac{2}{C_{N,s}}\,[u]_s\,[\theta^2u]_s<\infty .
\]

Now apply \eqref{eq:algebraic} with $a=\theta(x)$, $b=\theta(y)$, $p=u(x)$,
$q=u(y)$, divide by $|x-y|^{N+2s}$ and integrate over $\R^{2N}$ against
$\tfrac{C_{N,s}}{2}\,dx\,dy$. Both terms on the right are absolutely convergent by
the previous paragraph, so the integral splits. The left-hand side gives
$[\theta u]_s^2$ and the first term on the right gives $\mathcal E(u,\theta^2u)$.
This is \eqref{eq:identity}, and all three quantities are finite. The bound
\eqref{eq:Rbound} is \eqref{eq:abscv} multiplied by $\tfrac{C_{N,s}}{2}$.
\end{proof}

\subsection{The truncated concentrating family}\label{sec:trunc}

The remainder in \eqref{eq:identity} evaluated on the concentrating family is the
error term generated by the cutoff.

\begin{proposition}\label{prop:Eeps}
Let $\eta,\rho$ be as in \eqref{eq:cutoff} and set
\[
E_\varepsilon:=\frac{C_{N,s}}{2}\iint_{\R^{2N}}
\frac{U_{\varepsilon,j}(x)\,U_{\varepsilon,j}(y)\,\big(\eta(x)-\eta(y)\big)^2}
{|x-y|^{N+2s}}\,dx\,dy .
\]
There is $C=C(N,s,\mu_j,\rho,\eta)>0$ such that
\[
0\ \le\ E_\varepsilon\ \le\ C\,\varepsilon^{\,2\beta_j+2s-N} \, \text{for all }\varepsilon\in(0,\rho/4] ,
\]
and $2\beta_j+2s-N>2s$, so that $E_\varepsilon=o(\varepsilon^{2s})$.
\end{proposition}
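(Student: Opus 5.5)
Nonnegativity is immediate, since $U_{\varepsilon,j}>0$, so the integrand is nonnegative. For the upper bound I will exploit the fact that $\eta\equiv1$ on $B_{\rho/2}(a_j)$ and $\eta\equiv0$ outside $B_\rho(a_j)$. Consequently $(\eta(x)-\eta(y))^2$ vanishes unless at least one of $x,y$ lies outside $B_{\rho/2}(a_j)$. By symmetry of the integrand, $E_\varepsilon$ is at most twice the integral over the region $\{|x-a_j|\ge\rho/2\}$. There I split the $y$-integration into $|y-a_j|\ge\rho/4$ and $|y-a_j|<\rho/4$.

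\emph{Far--far region.} Here both $|x-a_j|,|y-a_j|\ge\rho/4$. I use $U_{\varepsilon,j}(x)U_{\varepsilon,j}(y)\le\frac12(U_{\varepsilon,j}(x)^2+U_{\varepsilon,j}(y)^2)$ and Lemma~\ref{lem:elem}, with the Lipschitz constant $L$ of $\eta$. This bounds the contribution by $C\int_{|x-a_j|\ge\rho/4}U_{\varepsilon,j}^2\,dx$. By the scaling of Lemma~\ref{lem:scaling} this equals $\varepsilon^{2s}\int_{|y|>\rho/(4\varepsilon)}(U^{(j)})^2$. Proposition~\ref{prop:tails}(a), applicable since $\rho/(4\varepsilon)\ge1$ for $\varepsilon\le\rho/4$, then gives the bound $C\varepsilon^{2s}\varepsilon^{2\beta_j-N}$, which is exactly the claimed order.

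\emph{Far--near region.} Here $|x-a_j|\ge\rho/2$ and $|y-a_j|<\rho/4$. Then $|x-y|\ge\rho/4$ and also $|x-y|\ge\frac12|x-a_j|$, while $(\eta(x)-\eta(y))^2\le1$. The kernel is therefore bounded by $C|x-a_j|^{-N-2s}$, and the integral factorizes as
\[
C\int_{|x-a_j|\ge\rho/2}\frac{U_{\varepsilon,j}(x)}{|x-a_j|^{N+2s}}\,dx\cdot\int_{|y-a_j|<\rho/4}U_{\varepsilon,j}(y)\,dy .
\]
For the first factor I use the upper bound \eqref{eq:twoside}: for $|x-a_j|\ge\rho/2\ge\varepsilon$ we have $U_{\varepsilon,j}(x)\le C\varepsilon^{-\frac{N-2s}{2}+\beta_j}|x-a_j|^{-\beta_j}$, so the first factor is at most $C\varepsilon^{\beta_j-\frac{N-2s}{2}}$. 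For the second factor I rescale to get $\varepsilon^{\frac{N+2s}{2}}\int_{|z|<\rho/(4\varepsilon)}U^{(j)}(z)\,dz$. The integral near the origin is finite because $\beta_-<N$. For large $|z|$ it behaves like $\int_1^{R}r^{N-1-\beta_j}dr$, which is $O(1)$ if $\beta_j>N$, $O(\log\frac1\varepsilon)$ if $\beta_j=N$, and $O(\varepsilon^{\beta_j-N})$ if $\beta_j<N$.

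\emph{Combining the far--near cases.} The product of the two factors is as follows.
\begin{itemize}
\item In the case $\beta_j<N$ it is $C\varepsilon^{\beta_j-\frac{N-2s}{2}+\frac{N+2s}{2}+\beta_j-N}=C\varepsilon^{2\beta_j+2s-N}$, which is exactly the claimed order.
\item In the case $\beta_j\ge N$ the exponent is $\beta_j+2s$, possibly with a logarithmic factor. Since $\beta_j<N-2s<N$ always holds by Proposition~\ref{prop:psi}, this case does not actually occur.
\end{itemize}
So the bound $C\varepsilon^{2\beta_j+2s-N}$ holds in all cases. The final claim $2\beta_j+2s-N>2s$ is just $\beta_j>N/2$.

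The main obstacle is the far--near cross term. A naive use of Lemma~\ref{lem:elem} there would only give $O(\varepsilon^{2s})$, because $U_{\varepsilon,j}^2$ has $L^2$ mass of order $\varepsilon^{2s}$ concentrated near $a_j$. The key is to exploit the separation $|x-y|\gtrsim|x-a_j|$ and then use the $L^1$ mass of $U_{\varepsilon,j}$ near $a_j$ together with its pointwise tail, rather than $L^2$ bounds. The bookkeeping of exponents in that step is where I would be most careful.
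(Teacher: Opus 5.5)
Your proof is correct. It reaches the same bound through a different decomposition from the paper's.

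\textbf{The paper's route.} The paper rescales to $I_R$ with $R=\rho/\varepsilon$ and reduces by symmetry to $J_R=\int_{|\zeta|>R/2}U(\zeta)K(\zeta)\,d\zeta$. It then proves the pointwise bound $K(\zeta)\le C|\zeta|^{-\beta_j}R^{-2s}$ by splitting the inner variable relative to $\zeta$ itself:
\begin{itemize}
\item on $\{|\xi-\zeta|\le|\zeta|/2\}$ it uses $U(\xi)\lesssim|\zeta|^{-\beta_j}$ together with Lemma~\ref{lem:elem};
\item on the complement it bounds $(\chi(\xi)-\chi(\zeta))^2\le1$ and uses the kernel separation, the growth $\int_{|\xi|\le2|\zeta|}U\lesssim|\zeta|^{N-\beta_j}$, and a separate tail piece $|\xi|>2|\zeta|$.
\end{itemize}
Integrating $U(\zeta)|\zeta|^{-\beta_j}$ over $|\zeta|>R/2$ finishes.

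\textbf{Your route.} You split relative to the pole at the fixed radius $\rho/4$ instead.
\begin{itemize}
\item The far--far piece is handled at once by $U(x)U(y)\le\frac12(U(x)^2+U(y)^2)$, Lemma~\ref{lem:elem} and the $L^2$ tail of Proposition~\ref{prop:tails}(a). So you need no pointwise decay there and no separate case $|\xi|>2|\zeta|$.
\item The only genuinely nonlocal contribution is the far--near cross term. It factorizes into the $L^1$ mass of $U_{\varepsilon,j}$ on $B_{\rho/4}(a_j)$, of order $\varepsilon^{\frac{N+2s}{2}+\beta_j-N}$, times the weighted tail $\int_{|x-a_j|\ge\rho/2}U_{\varepsilon,j}|x-a_j|^{-N-2s}\,dx$, of order $\varepsilon^{\beta_j-\frac{N-2s}{2}}$.
\end{itemize}
Your exponent bookkeeping is right.

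\textbf{Comparison.} Both arguments rest on the same two facts: $\beta_j<N$ for the $L^1$ growth, and $2\beta_j>N$ for the final $L^2$ tail. Your version is somewhat shorter. It also isolates exactly why a naive $L^2$ bound gives only $O(\varepsilon^{2s})$: the cross term with one variable near the pole. The paper's pointwise bound on $K(\zeta)$ is a slightly finer statement, since it controls the nonlocal cutoff interaction at each far point individually. Your discussion of the case $\beta_j\ge N$ is superfluous, since $\beta_j<N-2s$ always.
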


\begin{proof}
Write $U:=U^{(j)}$, $\beta:=\beta_j$, $\beta_-:=N-2s-\beta$, and let $C_0\ge1$ be the
constant in \eqref{eq:twoside} for $\mu_j$. Substituting $x=a_j+\varepsilon\xi$,
$y=a_j+\varepsilon\zeta$ and using
$U_{\varepsilon,j}(a_j+\varepsilon\xi)=\varepsilon^{-\frac{N-2s}{2}}U(\xi)$, the
factors combine to
$\varepsilon^{-(N-2s)}\varepsilon^{-N-2s}\varepsilon^{2N}=1$, so
\[
E_\varepsilon=\frac{C_{N,s}}{2}\,I_R,\,
I_R:=\iint_{\R^{2N}}\frac{U(\xi)U(\zeta)\big(\chi(\xi)-\chi(\zeta)\big)^2}
{|\xi-\zeta|^{N+2s}}\,d\xi\,d\zeta ,
\]
where $\chi(\xi):=\eta(a_j+\varepsilon\xi)$ and $R:=\rho/\varepsilon\ge4$. Then
$\chi$ takes values in $[0,1]$, $\chi\equiv1$ on $B_{R/2}$, $\chi\equiv0$ outside
$B_R$, and $\chi$ is Lipschitz with constant
$\varepsilon\operatorname{Lip}(\eta)=\Lambda/R$, $\Lambda:=\rho\operatorname{Lip}(\eta)$.
It suffices to prove
\begin{equation}\label{eq:IR}
I_R\le C_1\,R^{\,N-2s-2\beta}
\end{equation}
for $R\ge4$, since $\varepsilon=\rho/R$ turns \eqref{eq:IR} into the assertion. We prove it in the following three steps.

\emph{Step 1.} The integrand vanishes unless $\max\{|\xi|,|\zeta|\}>R/2$, since $\chi\equiv1$ on $B_{R/2}$. Since the integrand is nonnegative and symmetric,
\[
I_R\ \le\ 2\,J_R,\,
J_R:=\int_{|\zeta|>R/2}U(\zeta)\,K(\zeta)\,d\zeta,\,
K(\zeta):=\int_{\R^N}\frac{U(\xi)\big(\chi(\xi)-\chi(\zeta)\big)^2}
{|\xi-\zeta|^{N+2s}}\,d\xi ,
\]
by Tonelli's theorem.

\emph{Step 2.} Fix $\zeta$ with $|\zeta|>R/2\ge2$. Split
$\R^N=A_1\cup A_2$ with $A_1:=\{|\xi-\zeta|\le|\zeta|/2\}$ and
$A_2:=\{|\xi-\zeta|>|\zeta|/2\}$.

On $A_1$ we have $|\xi|\ge|\zeta|/2>1$, so \eqref{eq:twoside} gives
$U(\xi)\le C_0(|\zeta|/2)^{-\beta}$, and Lemma~\ref{lem:elem} applied to $\chi$ yields
\[
\int_{A_1}\le C_02^{\beta}|\zeta|^{-\beta}
\int_{\R^N}\frac{\big(\chi(\xi)-\chi(\zeta)\big)^2}{|\xi-\zeta|^{N+2s}}d\xi
\ \le\ C_02^{\beta}c_{N,s}\Lambda^{2s}\,|\zeta|^{-\beta}R^{-2s} .
\]

On $A_2$ we use $(\chi(\xi)-\chi(\zeta))^2\le1$ and split further. If
$|\xi|\le2|\zeta|$, then $|\xi-\zeta|^{-N-2s}\le2^{N+2s}|\zeta|^{-N-2s}$ and, by
\eqref{eq:twoside} together with $\beta_-<N-2s<N$ and $\beta<N$,
\[
\int_{|\xi|\le2|\zeta|}U(\xi)\,d\xi
\le C_0\omega_{N-1}\Big(\int_0^1r^{N-1-\beta_-}dr+\int_1^{2|\zeta|}r^{N-1-\beta}dr\Big)
\le C_2|\zeta|^{N-\beta},
\]
where we used $|\zeta|\ge1$. This gives us at most
$2^{N+2s}C_2|\zeta|^{-\beta-2s}$. If $|\xi|>2|\zeta|$, then
$|\xi-\zeta|\ge|\xi|-|\zeta|\ge|\xi|/2$, so this part contributes at most
\[
2^{N+2s}C_0\omega_{N-1}\int_{2|\zeta|}^{\infty}r^{-\beta-N-2s}r^{N-1}dr
=C_3|\zeta|^{-\beta-2s} .
\]
Since $|\zeta|>R/2$ we have $|\zeta|^{-2s}\le2^{2s}R^{-2s}$, so both contributions on
$A_2$ are bounded by $C_4|\zeta|^{-\beta}R^{-2s}$. Altogether
\[
K(\zeta)\ \le\ C_5\,|\zeta|^{-\beta}R^{-2s}\, \text{for }|\zeta|>R/2 .
\]

\emph{Step 3.} Using $U(\zeta)\le C_0|\zeta|^{-\beta}$ for
$|\zeta|>R/2>1$ and $2\beta>N$,
\[
J_R\ \le\ C_5C_0R^{-2s}\int_{|\zeta|>R/2}|\zeta|^{-2\beta}d\zeta
=C_5C_0\,\omega_{N-1}R^{-2s}\int_{R/2}^{\infty}r^{N-1-2\beta}dr
=C_6\,R^{-2s}R^{\,N-2\beta},
\]
which is \eqref{eq:IR}. Finally $2\beta+2s-N>2s$ is exactly $2\beta>N$.
\end{proof}

We next estimate the numerator and the denominator of the Rayleigh quotient.

\begin{theorem}\label{thm:trunc}
Let $\eta,\rho$ be as in \eqref{eq:cutoff} and put $v_\varepsilon:=\eta
U_{\varepsilon,j}$. Then $v_\varepsilon\in X\setminus\{0\}$ for every
$\varepsilon>0$, and as $\varepsilon\to0^+$,
\[
[v_\varepsilon]_s^2-\mu_j\int_\Omega\frac{v_\varepsilon^2}{|x-a_j|^{2s}}\,dx
=\big\|U^{(j)}\big\|_{L^{\cstar}}^{\cstar}
+O\big(\varepsilon^{\,2\beta_j+2s-N}\big)+O\big(\varepsilon^{\tau_j}\big), \,
\tau_j=\frac{2N\beta_j}{N-2s}-N ,
\]
and
\[
0\ \le\ \big\|U^{(j)}\big\|_{L^{\cstar}}^{\cstar}
-\int_\Omega|v_\varepsilon|^{\cstar}dx\ \le\ C\varepsilon^{\tau_j} .
\]
Both error exponents exceed $2s$.
\end{theorem}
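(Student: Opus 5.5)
We first check that $v_\varepsilon\in X\setminus\{0\}$. By Lemma~\ref{lem:scaling} and Corollary~\ref{cor:L2}, $U_{\varepsilon,j}\in\dHs\cap L^2(\R^N)$. Lemma~\ref{lem:product} with $\theta=\eta$ then gives $v_\varepsilon\in\dHs$. Since $\operatorname{supp}\eta\subset B_\rho(a_j)\subset\Omega$, the function $v_\varepsilon$ vanishes outside $\Omega$, so $v_\varepsilon\in X$. It is nonzero because $U^{(j)}>0$ and $\eta\equiv1$ on $B_{\rho/2}(a_j)$.

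For the numerator we apply the cutoff identity, Lemma~\ref{lem:identity}, with $\theta=\eta$ and $u=U_{\varepsilon,j}$:
\[
[v_\varepsilon]_s^2=\mathcal E\big(U_{\varepsilon,j},\eta^2U_{\varepsilon,j}\big)+E_\varepsilon .
\]
Next we test \eqref{eq:weakUeps} of Lemma~\ref{lem:eqscaled} with $\varphi=\eta^2U_{\varepsilon,j}\in\dHs$:
\[
\mathcal E\big(U_{\varepsilon,j},\eta^2U_{\varepsilon,j}\big)=\mu_j\int_{\R^N}\frac{\eta^2U_{\varepsilon,j}^2}{|x-a_j|^{2s}}\,dx+\int_{\R^N}\eta^2U_{\varepsilon,j}^{\cstar}\,dx .
\]
The first term on the right is exactly $\mu_j\int_\Omega v_\varepsilon^2|x-a_j|^{-2s}$, so it cancels against the Hardy term in the statement. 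This cancellation is why the identity is used: no separate estimate of the truncated Hardy integral is needed. What remains is
\[
[v_\varepsilon]_s^2-\mu_j\int_\Omega\frac{v_\varepsilon^2}{|x-a_j|^{2s}}=\int\eta^2U_{\varepsilon,j}^{\cstar}+E_\varepsilon .
\]
Proposition~\ref{prop:Eeps} gives $0\le E_\varepsilon\le C\varepsilon^{2\beta_j+2s-N}$.

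Everything now reduces to tail estimates for the critical term. Since $0\le\eta^2\le1$ and $\eta\equiv1$ on $B_{\rho/2}(a_j)$, we have
\[
0\le\|U^{(j)}\|_{L^{\cstar}}^{\cstar}-\int\eta^2U_{\varepsilon,j}^{\cstar}\le\int_{|x-a_j|>\rho/2}U_{\varepsilon,j}^{\cstar}\,dx=\int_{|y|>\rho/(2\varepsilon)}(U^{(j)})^{\cstar}dy .
\]
Here the first bound uses the scale invariance of the $L^{\cstar}$ norm from Lemma~\ref{lem:scaling}, and the final equality is the substitution $x=a_j+\varepsilon y$. Proposition~\ref{prop:tails}(e) with $R=\rho/(2\varepsilon)\ge1$ bounds the last integral by $C\varepsilon^{\tau_j}$. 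This proves the numerator expansion.

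For the denominator, $|v_\varepsilon|^{\cstar}=\eta^{\cstar}U_{\varepsilon,j}^{\cstar}$ with $0\le\eta^{\cstar}\le1$ and $\eta^{\cstar}=1$ on $B_{\rho/2}(a_j)$. The same argument then yields $0\le\|U^{(j)}\|_{L^{\cstar}}^{\cstar}-\int_\Omega|v_\varepsilon|^{\cstar}\le C\varepsilon^{\tau_j}$.

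Finally, both error exponents exceed $2s$. We have $2\beta_j+2s-N>2s$ because $2\beta_j>N$, and $\tau_j>2sN/(N-2s)>2s$ by Proposition~\ref{prop:tails}(e). The only delicate step is the exact cancellation of the Hardy term through the cutoff identity combined with the equation; the genuinely hard estimate, the bound on $E_\varepsilon$, was already done in Proposition~\ref{prop:Eeps}. What remains to check is that $\varphi=\eta^2U_{\varepsilon,j}$ is admissible in \eqref{eq:weakUeps}, which holds by Lemma~\ref{lem:product}, and that the Hardy integrals are finite, which follows from Theorem~\ref{thm:hardy}.
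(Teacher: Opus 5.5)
Your proposal is correct and follows the paper's proof essentially step for step. Like the paper, you combine the cutoff identity of Lemma~\ref{lem:identity} with testing \eqref{eq:weakUeps} against $\eta^2U_{\varepsilon,j}$, so that the Hardy term cancels, bound $E_\varepsilon$ by Proposition~\ref{prop:Eeps}, and control the critical-term tails with weights $\eta^2$ and $\eta^{\cstar}$ through Proposition~\ref{prop:tails}(e).
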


\begin{proof}
The support of $v_\varepsilon$ lies in $B_\rho(a_j)\subset\Omega$ by
\eqref{eq:cutoff}, and $v_\varepsilon\in\dHs$ by Lemma~\ref{lem:product}. Hence
$v_\varepsilon\in X$, and $v_\varepsilon\ne0$ since $U_{\varepsilon,j}>0$ and
$\eta\equiv1$ on $B_{\rho/2}(a_j)$.

Apply Lemma~\ref{lem:identity} with $\theta=\eta$ and $u=U_{\varepsilon,j}$, then use
$\eta^2U_{\varepsilon,j}\in\dHs$ as a test function in \eqref{eq:weakUeps} of
Lemma~\ref{lem:eqscaled}:
\[
\mathcal E\big(U_{\varepsilon,j},\eta^2U_{\varepsilon,j}\big)
=\mu_j\int_{\R^N}\frac{\eta^2U_{\varepsilon,j}^2}{|x-a_j|^{2s}}\,dx
+\int_{\R^N}\eta^2U_{\varepsilon,j}^{\cstar}\,dx ,
\]
both integrals being finite by Theorem~\ref{thm:hardy} and by
$0\le\eta\le1$. Since $(\eta U_{\varepsilon,j})^2=\eta^2U_{\varepsilon,j}^2$,
\eqref{eq:identity} becomes
\begin{equation}\label{eq:numerator}
[v_\varepsilon]_s^2-\mu_j\int_\Omega\frac{v_\varepsilon^2}{|x-a_j|^{2s}}\,dx
=\int_{\R^N}\eta^2U_{\varepsilon,j}^{\cstar}\,dx+E_\varepsilon ,
\end{equation}
with $E_\varepsilon$ as in Proposition~\ref{prop:Eeps}. For $q\in\{2,\cstar\}$ we have
$0\le1-\eta^q\le1$ and $1-\eta^q=0$ on $B_{\rho/2}(a_j)$, so by
Lemma~\ref{lem:scaling} and Proposition~\ref{prop:tails}(e), for
$\varepsilon\le\rho/2$,
\begin{equation}\label{eq:critcut}
0\le\int_{\R^N}U_{\varepsilon,j}^{\cstar}dx-\int_{\R^N}\eta^qU_{\varepsilon,j}^{\cstar}dx
\le\int_{|x-a_j|>\rho/2}U_{\varepsilon,j}^{\cstar}dx
=\int_{|y|>\rho/(2\varepsilon)}\big(U^{(j)}\big)^{\cstar}dy
\le C\varepsilon^{\tau_j},
\end{equation}
while $\int_{\R^N}U_{\varepsilon,j}^{\cstar}dx=\|U^{(j)}\|^{\cstar}_{L^{\cstar}}$ by
Lemma~\ref{lem:scaling}. Combining \eqref{eq:numerator}, \eqref{eq:critcut} with
$q=2$ and Proposition~\ref{prop:Eeps} gives the first display, \eqref{eq:critcut} with
$q=\cstar$ gives the second, since $|v_\varepsilon|^{\cstar}=\eta^{\cstar}
U_{\varepsilon,j}^{\cstar}$. Both exponents exceed $2s$ by
Proposition~\ref{prop:tails}(e) and Proposition~\ref{prop:Eeps}.
\end{proof}

\subsection{Expansion of the Rayleigh quotient}\label{sec:expansion}

Combining Theorem~\ref{thm:trunc} with Corollary~\ref{cor:Bj} yields the following
expansion.

\begin{theorem}\label{thm:expansion}
Let $\lambda\in\R$, let $j$, $\rho$ and $\eta$ satisfy \eqref{eq:cutoff}, and let
\[
R_\lambda(v):=\frac{Q_{\boldsymbol\mu,\mathbf a}(v)-\lambda\|v\|^2_{L^2(\Omega)}}
{\|v\|^2_{L^{\cstar}(\Omega)}},\, v\in X\setminus\{0\} .
\]
Then, with $v_\varepsilon=\eta U_{\varepsilon,j}$ and as $\varepsilon\to0^+$,
\[
R_\lambda(v_\varepsilon)=S_{\mu_j}
-C_{\mu_j}\,B_j(\lambda,\mathbf a,\boldsymbol\mu)\,\varepsilon^{2s}
+O\Big(\varepsilon^{\,2s+\kappa_j}\log\tfrac1\varepsilon\Big),
\]
where
\[
C_{\mu_j}:=\frac{\big\|U^{(j)}\big\|_{L^2}^2}{\big\|U^{(j)}\big\|_{L^{\cstar}}^{2}}>0, \,
\kappa_j:=\min\{1,\,2\beta_j-N\}>0 ,
\]
and $B_j(\lambda,\mathbf a,\boldsymbol\mu)$ as in Corollary~\ref{cor:Bj}.
\end{theorem}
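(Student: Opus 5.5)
The plan is to expand the numerator and denominator of $R_\lambda(v_\varepsilon)$ separately and then divide. Write $A:=\|U^{(j)}\|_{L^{\cstar}}^{\cstar}$, so that $S_{\mu_j}=A^{1-2/\cstar}=A^{2s/N}$ by \eqref{eq:SmuUmu}. The numerator splits as
\[
Q_{\boldsymbol\mu,\mathbf a}(v_\varepsilon)-\lambda\|v_\varepsilon\|_{L^2}^2
=\Big([v_\varepsilon]_s^2-\mu_j\int_\Omega\frac{v_\varepsilon^2}{|x-a_j|^{2s}}\Big)
-\int_{B_\rho(a_j)}\Big(\lambda+\sum_{i\neq j}\frac{\mu_i}{|x-a_i|^{2s}}\Big)\eta^2U_{\varepsilon,j}^2\,dx ,
\]
using $\operatorname{supp}v_\varepsilon\subset B_\rho(a_j)$. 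Theorem~\ref{thm:trunc} gives the first bracket as $A+O(\varepsilon^{2\beta_j+2s-N})+O(\varepsilon^{\tau_j})$. Both exponents exceed $2s$. In fact $2\beta_j+2s-N\ge 2s+\kappa_j$, and $\tau_j-2s>\frac{4s^2}{N-2s}>0$. I would either absorb these errors into the stated remainder or, more honestly, note that the remainder should carry $\min\{\kappa_j,\tau_j-2s\}$. Checking this exponent bookkeeping is the only delicate point.

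For the weighted $L^2$ term I would apply Corollary~\ref{cor:Bj}, but with $\eta^2$ inserted. The function $W\eta^2$ is Lipschitz on $\overline{B_\rho(a_j)}$ and equals $B_j$ at $a_j$, so Theorem~\ref{thm:order} applies to it directly. The result is $B_j\|U^{(j)}\|_{L^2}^2\varepsilon^{2s}+O(\varepsilon^{2s+\kappa_j}\log\frac1\varepsilon)$. Alternatively, one can note that $1-\eta^2$ vanishes on $B_{\rho/2}(a_j)$, so the difference is bounded by the tail estimate of Proposition~\ref{prop:tails}(a), which is $O(\varepsilon^{2\beta_j-N+2s})$.

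For the denominator, Theorem~\ref{thm:trunc} gives $\int|v_\varepsilon|^{\cstar}=A+O(\varepsilon^{\tau_j})$. Hence
\[
\|v_\varepsilon\|_{L^{\cstar}}^{-2}=A^{-2/\cstar}\big(1+O(\varepsilon^{\tau_j})\big).
\]
Dividing, I get
\[
R_\lambda(v_\varepsilon)=A^{-2/\cstar}\Big(A-B_j\|U^{(j)}\|_{L^2}^2\varepsilon^{2s}+o(\varepsilon^{2s})\Big)\big(1+O(\varepsilon^{\tau_j})\big).
\]
This equals $S_{\mu_j}-C_{\mu_j}B_j\varepsilon^{2s}$ plus higher-order terms, since $A^{1-2/\cstar}=S_{\mu_j}$ and $\|U^{(j)}\|_{L^2}^2A^{-2/\cstar}=C_{\mu_j}$. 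The main obstacle is only confirming that every error exponent is at least $2s+\kappa_j$, possibly after shrinking $\kappa_j$. All other steps are direct substitutions of earlier results.
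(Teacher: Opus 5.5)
Your argument is the paper's proof: the same split of the numerator into the $a_j$-part, controlled by Theorem~\ref{thm:trunc}, and the weighted $L^2$ part with $W=\lambda+\sum_{i\neq j}\mu_i|x-a_i|^{-2s}$. The denominator expansion and the identifications $A^{2s/N}=S_{\mu_j}$ and $\|U^{(j)}\|_{L^2}^2A^{-2/\cstar}=C_{\mu_j}$ are also the same. Applying Theorem~\ref{thm:order} directly to the Lipschitz weight $W\eta^2$, which equals $B_j$ at $a_j$, is a valid shortcut. The paper instead removes $\eta^2$ with the tail bound and then quotes Corollary~\ref{cor:Bj}.

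The one point you left open is exactly what the statement as written requires. Your bound $\tau_j-2s>\frac{4s^2}{N-2s}$ from Proposition~\ref{prop:tails}(e) does not settle it, because it ignores $\beta_j$. It can be smaller than $\kappa_j$, for example when $N\ge6$ and $\mu_j$ is small, so that $\kappa_j=1$ while $\frac{4s^2}{N-2s}<1$. Use $\beta_j$ itself:
\[
(\tau_j-2s)-(2\beta_j-N)=\frac{2N\beta_j}{N-2s}-2\beta_j-2s=2s\Big(\frac{2\beta_j}{N-2s}-1\Big)>0,
\]
since $2\beta_j>N>N-2s$. Hence $\tau_j-2s>2\beta_j-N\ge\kappa_j$, and so $\min\{\kappa_j,\tau_j-2s\}=\kappa_j$.

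It follows that every error term is $O(\varepsilon^{2s+\kappa_j})$:
\begin{itemize}
\item the $O(\varepsilon^{\tau_j})$ term in the numerator;
\item the factor $1+O(\varepsilon^{\tau_j})$ from the denominator;
\item the term $O(\varepsilon^{2\beta_j+2s-N})$.
\end{itemize}
No shrinking of $\kappa_j$ is needed, and the theorem holds with the stated remainder. Add this computation, and replace the $o(\varepsilon^{2s})$ in your final display with the tracked $O(\varepsilon^{2s+\kappa_j}\log\frac1\varepsilon)$. With those two changes your proof is complete.
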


\begin{proof}
Write $W(x):=\lambda+\sum_{i\neq j}\mu_i|x-a_i|^{-2s}$ which is smooth on
$\overline{B_\rho(a_j)}$. By the proof of Corollary~\ref{cor:Bj} and let
$M:=\|W\|_{L^\infty(B_\rho(a_j))}$. Separating the contribution of the pole $a_j$,
\[
Q_{\boldsymbol\mu,\mathbf a}(v_\varepsilon)-\lambda\|v_\varepsilon\|_{L^2}^2
=\Big([v_\varepsilon]_s^2-\mu_j\int_\Omega\frac{v_\varepsilon^2}{|x-a_j|^{2s}}\Big)
-\int_{B_\rho(a_j)}W\,\eta^2U_{\varepsilon,j}^2\,dx .
\]
For the second term, $1-\eta^2$ vanishes on $B_{\rho/2}(a_j)$, so by
Lemma~\ref{lem:scaling} and Proposition~\ref{prop:tails}(a), for
$\varepsilon\le\rho/2$,
\[
\Big|\int_{B_\rho(a_j)}W(1-\eta^2)U_{\varepsilon,j}^2\,dx\Big|
\le M\int_{|x-a_j|>\rho/2}U_{\varepsilon,j}^2\,dx
=M\varepsilon^{2s}\!\!\int_{|y|>\rho/(2\varepsilon)}\!\!\big(U^{(j)}\big)^2dy
\le C\varepsilon^{\,2s+2\beta_j-N},
\]
so Corollary~\ref{cor:Bj} gives
\[
\int_{B_\rho(a_j)}W\eta^2U_{\varepsilon,j}^2\,dx
=B_j(\lambda,\mathbf a,\boldsymbol\mu)\big\|U^{(j)}\big\|^2_{L^2}\varepsilon^{2s}
+O\Big(\varepsilon^{2s+\kappa_j}\log\tfrac1\varepsilon\Big) .
\]
Together with Theorem~\ref{thm:trunc} and \eqref{eq:SmuUmu}, and writing
$A:=\|U^{(j)}\|^{\cstar}_{L^{\cstar}}=S_{\mu_j}\|U^{(j)}\|^2_{L^{\cstar}}$,
\[
Q_{\boldsymbol\mu,\mathbf a}(v_\varepsilon)-\lambda\|v_\varepsilon\|^2_{L^2}
=A-B_j\big\|U^{(j)}\big\|^2_{L^2}\varepsilon^{2s}
+O\Big(\varepsilon^{2s+\kappa_j}\log\tfrac1\varepsilon\Big)
+O\big(\varepsilon^{\tau_j}\big)+O\big(\varepsilon^{\,2\beta_j+2s-N}\big).
\]
Now $\kappa_j\le2\beta_j-N$ by definition, and
\[
(\tau_j-2s)-(2\beta_j-N)=\frac{2N\beta_j}{N-2s}-2\beta_j-2s
=2s\Big(\frac{2\beta_j}{N-2s}-1\Big)>0
\]
since $2\beta_j>N>N-2s$. Hence $\tau_j-2s>2\beta_j-N\ge\kappa_j$ and
$(2\beta_j+2s-N)-2s=2\beta_j-N\ge\kappa_j$, so all three error terms are
$O(\varepsilon^{2s+\kappa_j}\log\frac1\varepsilon)$. For the denominator,
Theorem~\ref{thm:trunc} gives
\[
\|v_\varepsilon\|^2_{L^{\cstar}(\Omega)}
=\big(A+O(\varepsilon^{\tau_j})\big)^{2/\cstar}
=\big\|U^{(j)}\big\|^2_{L^{\cstar}}\Big(1+O\big(\varepsilon^{\tau_j}\big)\Big).
\]
Dividing and using $\tau_j-2s>\kappa_j$ once more,
\[
R_\lambda(v_\varepsilon)
=\frac{A}{\|U^{(j)}\|^2_{L^{\cstar}}}
-\frac{\|U^{(j)}\|^2_{L^2}}{\|U^{(j)}\|^2_{L^{\cstar}}}B_j\varepsilon^{2s}
+O\Big(\varepsilon^{2s+\kappa_j}\log\tfrac1\varepsilon\Big)
=S_{\mu_j}-C_{\mu_j}B_j\varepsilon^{2s}
+O\Big(\varepsilon^{2s+\kappa_j}\log\tfrac1\varepsilon\Big). \qedhere
\]
\end{proof}

\subsection{The geometric threshold}\label{sec:geo}

The expansion turns the sign of $B_j$ into the strict inequality
$S_{\lambda,\boldsymbol\mu,\mathbf a}(\Omega)<S_*$, and the sign of $B_j$ at a
dominant pole is exactly the condition $\lambda>\lambda_{\mathrm{geo}}$.

\begin{corollary}\label{cor:strict}
Assume \eqref{eq:H}, let $\lambda<\lambda_1(\boldsymbol\mu,\mathbf a)$, and set
\[
S_{\lambda,\boldsymbol\mu,\mathbf a}(\Omega)
:=\inf_{v\in X\setminus\{0\}}R_\lambda(v) .
\]
If there is $j$ with $\mu_j>0$, $\beta_+(\mu_j)>\frac N2$ and
$B_j(\lambda,\mathbf a,\boldsymbol\mu)>0$, then
\[
S_{\lambda,\boldsymbol\mu,\mathbf a}(\Omega)<S_{\mu_j} .
\]
If moreover $j\in J_*$, then $S_{\lambda,\boldsymbol\mu,\mathbf a}(\Omega)<S_*$. In
particular, if $0<\mu_{\max}<\mubar$, then
$S_{\lambda,\boldsymbol\mu,\mathbf a}(\Omega)<S_*$ for every
\[
\lambda_{\mathrm{geo}}<\lambda<\lambda_1(\boldsymbol\mu,\mathbf a) ,
\]
with $\lambda_{\mathrm{geo}}$ as in \eqref{eq:lambdageo},
and $\lambda_{\mathrm{geo}}<0$ as soon as $k\ge2$ and at least two of the masses are
positive, so that the range then contains $\lambda=0$.
\end{corollary}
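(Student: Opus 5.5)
The plan is to feed the family $v_\varepsilon=\eta U_{\varepsilon,j}$ of Theorem~\ref{thm:trunc} into the expansion of Theorem~\ref{thm:expansion}. Fix $j$ with $\mu_j>0$, $\beta_j=\beta_+(\mu_j)>\frac N2$ and $B_j=B_j(\lambda,\mathbf a,\boldsymbol\mu)>0$. Choose $\rho,\eta$ as in \eqref{eq:cutoff}; this is possible since $a_j\in\Omega$ and $d_j>0$ (with $d_j=+\infty$ if $k=1$). By Theorem~\ref{thm:trunc}, $v_\varepsilon\in X\setminus\{0\}$, so $S_{\lambda,\boldsymbol\mu,\mathbf a}(\Omega)\le R_\lambda(v_\varepsilon)$ for every $\varepsilon$. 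Theorem~\ref{thm:expansion} gives
\[
R_\lambda(v_\varepsilon)=S_{\mu_j}-\varepsilon^{2s}\Big(C_{\mu_j}B_j+O\big(\varepsilon^{\kappa_j}\log\tfrac1\varepsilon\big)\Big).
\]
Since $C_{\mu_j}B_j>0$ and $\varepsilon^{\kappa_j}\log\frac1\varepsilon\to0$ (because $\kappa_j>0$), the bracket is positive for $\varepsilon$ small. Hence $R_\lambda(v_\varepsilon)<S_{\mu_j}$, and so $S_{\lambda,\boldsymbol\mu,\mathbf a}(\Omega)<S_{\mu_j}$. The hypothesis $\lambda<\lambda_1$ only serves to put us in the setting of \eqref{eq:Slambda}; it is not used in the inequality itself.

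If in addition $j\in J_*$, then Proposition~\ref{prop:Smu} gives $S_{\mu_j}=S_*$, and the second claim follows.

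For the ``in particular'' part, assume $0<\mu_{\max}<\mubar$. Then every $j\in J_*$ has $\mu_j=\mu_{\max}>0$. Moreover $\mu_{\max}<\mubar$ forces $N>4s$ and $\beta_+(\mu_j)>\frac N2$: by Theorem~\ref{thm:mubar}(ii) when $N>4s$, while for $N\le4s$ the quantity $\mubar$ is only defined when $N>4s$, so this case is implicitly excluded by the standing assumption. By the definition \eqref{eq:lambdageo}, the inequality $\lambda>\lambda_{\mathrm{geo}}$ means
\[
\lambda>-\sum_{i\neq j}\mu_i|a_i-a_j|^{-2s}
\]
for the index $j\in J_*$ realizing the maximum. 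For that $j$ this is exactly $B_j>0$, and the previous step applies.

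Finally, suppose at least two masses are positive. Take $j\in J_*$; then $\mu_j>0$, so some other index $i\neq j$ has $\mu_i>0$. This gives
\[
\sum_{i\neq j}\mu_i|a_i-a_j|^{-2s}>0,
\]
hence $\lambda_{\mathrm{geo}}<0$. Since $\lambda_1>0$ by Theorem~\ref{thm:A}(i), the interval $(\lambda_{\mathrm{geo}},\lambda_1)$ contains $0$. There is no real obstacle here: the only delicate point is making sure the error term is genuinely $o(\varepsilon^{2s})$, and that is already built into Theorem~\ref{thm:expansion}.
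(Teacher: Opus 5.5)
Your proposal is correct and follows the paper's proof. You test with $v_\varepsilon=\eta U_{\varepsilon,j}$ and read off $R_\lambda(v_\varepsilon)<S_{\mu_j}$ from Theorem~\ref{thm:expansion} since $C_{\mu_j}B_j>0$; you use Proposition~\ref{prop:Smu} for $j\in J_*$; and you observe that $\lambda>\lambda_{\mathrm{geo}}$ is exactly $B_j>0$ at a maximizing index of $J_*$, with Theorem~\ref{thm:mubar} giving $\beta_+(\mu_{\max})>\frac N2$. Your side remarks are accurate small additions: $\lambda<\lambda_1$ is not used in the inequality itself, and $\lambda_1>0$ from Theorem~\ref{thm:A}(i) places $0$ in the range.
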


\begin{proof}
By Theorem~\ref{thm:expansion}, $R_\lambda(v_\varepsilon)<S_{\mu_j}$ for all small
$\varepsilon>0$, since $C_{\mu_j}B_j>0$ and the error is
$o(\varepsilon^{2s})$. If $j\in J_*$ then $S_{\mu_j}=S_*$ by
Proposition~\ref{prop:Smu}. For the last statement, let $j\in J_*$. Then
$\mu_j=\mu_{\max}$, which is positive by hypothesis and smaller than $\mubar$, so
$\beta_+(\mu_j)>\frac N2$ by Theorem~\ref{thm:mubar}. Moreover
$B_j(\lambda,\mathbf a,\boldsymbol\mu)>0$ for some $j\in J_*$ if and only if
$\lambda>\lambda_{\mathrm{geo}}$. Finally $\lambda_{\mathrm{geo}}<0$ exactly when
$\sum_{i\neq j}\mu_i>0$ for some $j\in J_*$, which holds as soon as at least two of
the masses are positive.
\end{proof}

\begin{remark}[the case $k=1$]\label{rem:GRSZsection6}
For $k=1$ the expansion above is closely related to \cite[Section~6]{GRSZ}, where the cutoff
identity, the remainder $E_\varepsilon$ and its order all appear, for a general
H\"older coefficient $a$ producing $a(0)$ in place of $B_j$. In the several-pole
setting the linear term and the Hardy terms centred at the remaining poles combine
near $a_j$ into $W(x)=\lambda+\sum_{i\neq j}\mu_i|x-a_i|^{-2s}$, smooth on
$\overline{B_\rho(a_j)}$ by \eqref{eq:cutoff}, whose value at $a_j$ is
$B_j(\lambda,\mathbf a,\boldsymbol\mu)$. This yields the threshold $\lambda_{\mathrm{geo}}$ and admits $\lambda=0$ and negative values of $\lambda$,
which does not arise when $k=1$ and $B_j=\lambda$.
\end{remark}

\section{Compactness of Palais-Smale sequences}\label{sec:ps}

Throughout this section we assume \eqref{eq:H} and
$\lambda<\lambda_1(\boldsymbol\mu,\mathbf a)$. For concentration-compactness, profile decompositions and global
compactness in fractional Sobolev spaces we refer to Lions \cite{Lions} and to
Palatucci and Pisante \cite{PP14,PP15}. By
Proposition~\ref{prop:reduction} the compactness statement needed here reduces to a single estimate, rather than a full profile decomposition or global compactness theorem. We call $(u_n)\subset X$ a Palais-Smale sequence for $J_\lambda$ at
level $c\in\R$ if $J_\lambda(u_n)\to c$ and $J_\lambda'(u_n)\to0$ in $X'$.

\subsection{Basic properties of Palais-Smale sequences}\label{sec:psbasic}

We first record that $J_\lambda$ is of class $C^1$, that Palais-Smale sequences are
bounded, and that their weak limits are critical points.

\begin{lemma}\label{lem:C1}
$J_\lambda\in C^1(X)$, with
\[
J_\lambda'(v)[\varphi]=\mathcal E(v,\varphi)
-\sum_i\mu_i\int_\Omega\frac{v\varphi}{|x-a_i|^{2s}}
-\lambda\int_\Omega v\varphi-\int_\Omega|v|^{\cstar-2}v\varphi, \, v,\varphi\in X,
\]
and every Palais-Smale sequence for $J_\lambda$ is bounded in $X$.
\end{lemma}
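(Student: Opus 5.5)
The statement has two parts: $J_\lambda\in C^1(X)$ with the displayed derivative, and boundedness of Palais--Smale sequences. I would treat them separately.

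\emph{$C^1$ regularity.} Write $J_\lambda=\tfrac12 P-\tfrac{1}{\cstar}G$, where
\[
P(v):=Q_{\boldsymbol\mu,\mathbf a}(v)-\lambda\|v\|_{L^2(\Omega)}^2,\qquad G(v):=\int_\Omega|v|^{\cstar}dx .
\]
By Proposition~\ref{prop:form}, the bilinear form $\mathcal B_{\boldsymbol\mu,\mathbf a}$ is bounded on $X$. The form $(v,\varphi)\mapsto\int_\Omega v\varphi$ is also bounded on $X$, by the continuous embedding $X\hookrightarrow L^2(\Omega)$. Hence $P$ is a bounded quadratic form, so it is smooth with
\[
P'(v)[\varphi]=2\mathcal B_{\boldsymbol\mu,\mathbf a}(v,\varphi)-2\lambda\int_\Omega v\varphi .
\]
For $G$, the map $t\mapsto|t|^{\cstar}$ is $C^1$ with derivative $\cstar|t|^{\cstar-2}t$. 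I would use the standard Nemytskii argument: $G$ is $C^1$ on $L^{\cstar}(\Omega)$ with $G'(v)=\cstar|v|^{\cstar-2}v\in L^{(\cstar)'}$, and $v\mapsto|v|^{\cstar-2}v$ is continuous from $L^{\cstar}$ into $L^{(\cstar)'}$. This follows from the mean value theorem, H\"older's inequality, and dominated convergence along a.e.\ convergent subsequences. Composing with the continuous embedding $X\hookrightarrow L^{\cstar}(\Omega)$ gives $G\in C^1(X)$. Adding the two derivatives yields the displayed formula.

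\emph{Boundedness.} Let $(u_n)$ be a Palais--Smale sequence at level $c$. The standard combination gives
\[
J_\lambda(u_n)-\tfrac12 J'_\lambda(u_n)[u_n]=\Big(\tfrac12-\tfrac1{\cstar}\Big)\int_\Omega|u_n|^{\cstar}=\tfrac sN\|u_n\|_{L^{\cstar}}^{\cstar}.
\]
The left side is at most $c+1+o(1)\,[u_n]_s$, so
\[
\|u_n\|_{L^{\cstar}}^{\cstar}\le C\big(1+[u_n]_s\big).
\]
Next, Theorem~\ref{thm:A}(v) applies since $\lambda<\lambda_1$, and gives
\[
\theta[u_n]_s^2\le P(u_n)=2J_\lambda(u_n)+\tfrac{2}{\cstar}\|u_n\|_{L^{\cstar}}^{\cstar}\le C\big(1+[u_n]_s\big).
\]
A quadratic bounded by a linear function forces $[u_n]_s$ to be bounded, which proves the claim.

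\emph{Main obstacle.} The only step with real content is the continuity of $G'$ on $X$; everything else is algebra. For this I would rely on the classical Nemytskii continuity result rather than reprove it.
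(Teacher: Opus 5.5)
Your proof is correct and follows essentially the paper's route: the $C^1$ part is the same (a bounded quadratic form, plus the $C^1$ Nemytskii map on $L^{\cstar}(\Omega)$ composed with the embedding $X\hookrightarrow L^{\cstar}(\Omega)$). For boundedness you take a two-step route, first bounding $\|u_n\|_{L^{\cstar}}^{\cstar}$ via $J_\lambda-\tfrac12J_\lambda'[u_n]$ and then applying Theorem~\ref{thm:A}(v) to $P(u_n)$, whereas the paper reaches the same quadratic-versus-linear inequality in one step using $J_\lambda(u_n)-\tfrac{1}{\cstar}J_\lambda'(u_n)[u_n]=\tfrac sN P(u_n)\ge\tfrac sN\theta[u_n]_s^2$.
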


\begin{proof}
By Proposition~\ref{prop:form} the map $v\mapsto Q_{\boldsymbol\mu,\mathbf a}(v)$ is a
bounded quadratic form on $X$, hence smooth, and so is $v\mapsto\|v\|^2_{L^2(\Omega)}$
since $X\hookrightarrow L^2(\Omega)$. The map $v\mapsto\frac1{\cstar}\int|v|^{\cstar}$
is of class $C^1$ on $L^{\cstar}(\Omega)$ with derivative
$\varphi\mapsto\int|v|^{\cstar-2}v\varphi$ \cite[Ch.~1]{Willem}, and
$X\hookrightarrow L^{\cstar}(\Omega)$ continuously, composing gives the formula.

Let $(u_n)$ be a Palais-Smale sequence at level $c$. Since
$\frac12-\frac1{\cstar}=\frac sN$,
\[
J_\lambda(u_n)-\frac{1}{\cstar}J_\lambda'(u_n)[u_n]
=\frac sN\Big(Q_{\boldsymbol\mu,\mathbf a}(u_n)-\lambda\|u_n\|_{L^2(\Omega)}^2\Big)
\ \ge\ \frac sN\,\theta\,[u_n]_s^2
\]
by Theorem~\ref{thm:A}(v), where $\theta=\theta(\lambda)>0$. The left-hand side is at
most $C+\frac{1}{\cstar}\|J'_\lambda(u_n)\|_{X'}[u_n]_s=C+o(1)[u_n]_s$, so
$[u_n]_s^2\le C'(1+[u_n]_s)$ and $([u_n]_s)$ is bounded.
\end{proof}

Weak limits of Palais-Smale sequences are critical points of $J_\lambda$.

\begin{lemma}\label{lem:weaklimit}
Let $(u_n)$ be a Palais-Smale sequence for $J_\lambda$ at level $c$ and let
$u_n\rightharpoonup u$ in $X$ along a subsequence. Then $u$ is a critical point of
$J_\lambda$, and
\[
J_\lambda(u)=\frac sN\int_\Omega|u|^{\cstar}dx\ \ge\ 0 .
\]
\end{lemma}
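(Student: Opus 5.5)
The plan is to pass to the limit in $J_\lambda'(u_n)[\varphi]\to0$ for fixed $\varphi\in X$, and then test the limiting equation with $u$ itself.

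\emph{Step 1: the linear terms.} By Lemma~\ref{lem:C1} the sequence is bounded, and we pass to the subsequence along which $u_n\rightharpoonup u$ in $X$. Fix $\varphi\in X$.

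\begin{itemize}
\item The map $v\mapsto\mathcal E(v,\varphi)$ is a bounded linear functional on $X$, so $\mathcal E(u_n,\varphi)\to\mathcal E(u,\varphi)$.
\item The same holds for $v\mapsto\int_\Omega\frac{v\varphi}{|x-a_i|^{2s}}$, which is bounded on $X$ by the estimate $\int_\Omega\frac{|v\varphi|}{|x-a_i|^{2s}}\le\Lam^{-1}[v]_s[\varphi]_s$ from the proof of Proposition~\ref{prop:form}.
\item By the compact embedding $X\hookrightarrow L^2(\Omega)$ we have $u_n\to u$ in $L^2(\Omega)$, so $\lambda\int u_n\varphi\to\lambda\int u\varphi$.
\end{itemize}

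\emph{Step 2: the critical term.} This is the only nonlinear term, and I expect it to be the only delicate point. Passing to a further subsequence, compactness into $L^q(\Omega)$ for $q<\cstar$ gives $u_n\to u$ a.e. The sequence $|u_n|^{\cstar-2}u_n$ is bounded in $L^{\cstar/(\cstar-1)}(\Omega)$, since its norm there equals $\|u_n\|_{L^{\cstar}}^{\cstar-1}$. A bounded sequence in a reflexive $L^p$, $1<p<\infty$, that converges a.e. converges weakly to its a.e. limit (Brezis--Lieb type lemma / \cite[Ch.~1]{Willem}). Hence $|u_n|^{\cstar-2}u_n\rightharpoonup|u|^{\cstar-2}u$ in $L^{\cstar/(\cstar-1)}$. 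Since $\varphi\in L^{\cstar}(\Omega)$, this gives $\int|u_n|^{\cstar-2}u_n\varphi\to\int|u|^{\cstar-2}u\varphi$.

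\emph{Step 3: $u$ is critical.} From Steps 1 and 2, $J_\lambda'(u)[\varphi]=\lim J_\lambda'(u_n)[\varphi]=0$, using $|J'_\lambda(u_n)[\varphi]|\le\|J'_\lambda(u_n)\|_{X'}[\varphi]_s\to0$. So $u$ is a critical point of $J_\lambda$.

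\emph{Step 4: the energy identity.} Taking $\varphi=u$ gives $Q_{\boldsymbol\mu,\mathbf a}(u)-\lambda\|u\|_{L^2}^2=\int_\Omega|u|^{\cstar}$. Substituting this into $J_\lambda(u)$ yields
\[
J_\lambda(u)=\Big(\tfrac12-\tfrac1{\cstar}\Big)\int_\Omega|u|^{\cstar}=\tfrac sN\int_\Omega|u|^{\cstar}\ge0 .
\]
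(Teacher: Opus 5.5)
Your proposal is correct and follows essentially the same route as the paper. The paper also passes the linear terms to the limit as bounded linear functionals, handles the critical term through boundedness in $L^{\cstar/(\cstar-1)}(\Omega)$ together with a.e.\ convergence (a standard weak-convergence fact, not really a Brezis--Lieb lemma), and then tests with $\varphi=u$ to obtain $J_\lambda(u)=\frac sN\int_\Omega|u|^{\cstar}$.
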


\begin{proof}
We fix $\varphi\in X$. Then $\mathcal E(u_n,\varphi)\to\mathcal E(u,\varphi)$ since
$\mathcal E=\langle\cdot,\cdot\rangle_X$ on $X$, each map
$v\mapsto\int_\Omega\frac{v\varphi}{|x-a_i|^{2s}}$ is a bounded linear functional on
$X$. By Cauchy-Schwarz and \eqref{eq:hardy-i}, so these terms also pass to the limit,
and so does $v\mapsto\int_\Omega v\varphi$. For the critical term, $(u_n)$ is bounded
in $L^{\cstar}(\Omega)$ and, along a further subsequence, $u_n\to u$ a.e.\ in
$\Omega$. Hence $\big(|u_n|^{\cstar-2}u_n\big)$ is bounded in
$L^{\cstar/(\cstar-1)}(\Omega)$ and converges a.e.\ to $|u|^{\cstar-2}u$, so it
converges to $|u|^{\cstar-2}u$ weakly in $L^{\cstar/(\cstar-1)}(\Omega)$
\cite[Prop.~5.4.7]{Willem}, and therefore
$\int|u_n|^{\cstar-2}u_n\varphi\to\int|u|^{\cstar-2}u\varphi$. Since
$J'_\lambda(u_n)[\varphi]\to0$, we get $J'_\lambda(u)[\varphi]=0$. Taking $\varphi=u$ gives
$Q_{\boldsymbol\mu,\mathbf a}(u)-\lambda\|u\|^2_{L^2}=\|u\|^{\cstar}_{L^{\cstar}}$,
so that
$$J_\lambda(u)=\big(\frac12-\frac1{\cstar}\big)\|u\|^{\cstar}_{L^{\cstar}}
=\frac sN\|u\|^{\cstar}_{L^{\cstar}}\ge0.$$ This finishes the proof.
\end{proof}

\subsection{Reduction to the critical-norm dichotomy}\label{sec:reduction}

These results reduce the compactness argument to one remaining estimate.

\begin{proposition}\label{prop:reduction}
Let $\lambda<\lambda_1(\boldsymbol\mu,\mathbf a)$ and let $(u_n)$ be a Palais-Smale
sequence for $J_\lambda$ at level $c$. Up to a subsequence, $u_n\rightharpoonup u$ in
$X$, $w_n:=u_n-u\to0$ in $L^q(\Omega)$ for every $q\in[1,\cstar)$, and
$\|w_n\|^{\cstar}_{L^{\cstar}(\Omega)}\to\nu$ for some $\nu\ge0$. For any such
subsequence,
\begin{equation}\label{eq:energysplit}
c=J_\lambda(u)+\frac sN\,\nu,\, J_\lambda(u)\ge0 .
\end{equation}
If in addition
\begin{equation}\label{eq:dichotomy}
\nu=0\, \text{ or }\, \nu\ \ge\ S_*^{N/(2s)} ,
\end{equation}
and $c<c_*$, then $\nu=0$ and $u_n\to u$ strongly in $X$.
\end{proposition}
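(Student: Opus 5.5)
First, I extract the subsequence. By Lemma~\ref{lem:C1} the sequence $(u_n)$ is bounded in $X$. Passing to a subsequence, I get $u_n\rightharpoonup u$ in $X$. The compact embedding $X\hookrightarrow L^q(\Omega)$ for $q<\cstar$ then gives $w_n\to0$ in $L^q(\Omega)$ for every such $q$. Since $(\|w_n\|_{L^{\cstar}})$ is bounded, one more subsequence makes $\|w_n\|^{\cstar}_{L^{\cstar}}\to\nu\ge0$, and I also take $u_n\to u$ a.e.\ in $\Omega$. By Lemma~\ref{lem:weaklimit}, $u$ is a critical point with $J_\lambda(u)=\frac sN\|u\|^{\cstar}_{L^{\cstar}}\ge0$.

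Next, the energy splitting \eqref{eq:energysplit}. I use three splittings of $u_n=u+w_n$.

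\emph{Quadratic part.} Because $\mathcal B_{\boldsymbol\mu,\mathbf a}$ is an inner product with the same weak topology (Proposition~\ref{prop:form}), $Q(u_n)=Q(u)+Q(w_n)+o(1)$. The $L^2$ term converges strongly, so $\|u_n\|_{L^2}^2=\|u\|^2_{L^2}+o(1)$.

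\emph{Critical part.} The Brezis--Lieb lemma gives $\|u_n\|^{\cstar}_{L^{\cstar}}=\|u\|^{\cstar}_{L^{\cstar}}+\|w_n\|^{\cstar}_{L^{\cstar}}+o(1)$.

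\emph{Derivative.} I test $J'_\lambda(u_n)\to0$ against $u_n$. This uses that $J'_\lambda(u)[u]=0$ and that $u_n$ is bounded, so $J'_\lambda(u_n)[u_n]=o(1)$. Subtracting the identity for $u$, and using the Brezis--Lieb lemma again, I obtain
\[
Q_{\boldsymbol\mu,\mathbf a}(w_n)=\|w_n\|^{\cstar}_{L^{\cstar}}+o(1)\to\nu .
\]
For the cross term $\int|u_n|^{\cstar-2}u_n\,u_n=\|u_n\|^{\cstar}_{L^{\cstar}}$, this is the same Brezis--Lieb splitting, so nothing extra is needed.

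Combining these, $J_\lambda(u_n)=J_\lambda(u)+\frac12Q(w_n)-\frac1{\cstar}\|w_n\|^{\cstar}+o(1)$. This tends to $J_\lambda(u)+(\frac12-\frac1{\cstar})\nu=J_\lambda(u)+\frac sN\nu$, which proves \eqref{eq:energysplit}.

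Finally, the conclusion. Assume \eqref{eq:dichotomy} and $c<c_*$, and suppose $\nu\ge S_*^{N/(2s)}$. Then $c\ge\frac sN\nu\ge\frac sN S_*^{N/(2s)}=c_*$, because $J_\lambda(u)\ge0$; this is a contradiction, so $\nu=0$. Then $Q_{\boldsymbol\mu,\mathbf a}(w_n)\to\nu=0$, and the norm equivalence \eqref{eq:equiv} gives $[w_n]_s\to0$, that is, $u_n\to u$ strongly in $X$.

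The only step that needs care is the identity $Q(w_n)-\|w_n\|^{\cstar}_{L^{\cstar}}\to0$ coming from the derivative. The $\lambda$-term in it vanishes because $w_n\to0$ in $L^2$. The remaining terms are standard consequences of the Brezis--Lieb lemma applied to the bounded sequence $(u_n)$.
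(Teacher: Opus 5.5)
Your proof is correct and follows the paper's route: boundedness from Lemma~\ref{lem:C1}, the compact embedding, Brezis--Lieb, Lemma~\ref{lem:weaklimit}, and the contradiction with $c<c_*$. The only difference is cosmetic. You derive both the energy splitting and the strong convergence from $Q_{\boldsymbol\mu,\mathbf a}(w_n)-\|w_n\|^{\cstar}_{L^{\cstar}}\to0$, an identity the paper proves separately in Corollary~\ref{cor:dicho}. The paper instead writes $c=\frac sN\lim_n\|u_n\|^{\cstar}_{L^{\cstar}}$ and obtains strong convergence from $\|u_n\|_{\boldsymbol\mu,\mathbf a}\to\|u\|_{\boldsymbol\mu,\mathbf a}$ together with weak convergence.
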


\begin{proof}
Boundedness follows from Lemma~\ref{lem:C1}, so a subsequence converges weakly in $X$ and, by
the compact embedding $X\hookrightarrow L^q(\Omega)$ for $q<\cstar$, strongly in
$L^q(\Omega)$ and, along a further subsequence, a.e.\ in $\Omega$. We now apply the Brezis-Lieb lemma. Using the boundedness of
$(\|w_n\|_{L^{\cstar}})$ up to a subsequence, $\|w_n\|^{\cstar}_{L^{\cstar}}$ converges.

From $J'_\lambda(u_n)[u_n]=o(1)[u_n]_s=o(1)$,
\[
Q_{\boldsymbol\mu,\mathbf a}(u_n)-\lambda\|u_n\|^2_{L^2}
=\|u_n\|^{\cstar}_{L^{\cstar}}+o(1), \, \text{so}\,
J_\lambda(u_n)=\frac sN\|u_n\|^{\cstar}_{L^{\cstar}}+o(1),
\]
and letting $n\to\infty$, $c=\frac sN\lim_n\|u_n\|^{\cstar}_{L^{\cstar}}$. By the
Brezis-Lieb lemma \cite{BL}, $\|u_n\|^{\cstar}_{L^{\cstar}}
=\|u\|^{\cstar}_{L^{\cstar}}+\|w_n\|^{\cstar}_{L^{\cstar}}+o(1)$, so
$c=\frac sN\|u\|^{\cstar}_{L^{\cstar}}+\frac sN\nu$, which is
\eqref{eq:energysplit} by Lemma~\ref{lem:weaklimit}.

We assume \eqref{eq:dichotomy} and $c<c_*$. If $\nu>0$ then $\nu\ge S_*^{N/(2s)}$,
and \eqref{eq:energysplit} gives $c\ge\frac sN S_*^{N/(2s)}=c_*$, a contradiction.
Hence $\nu=0$, i.e.\ $u_n\to u$ in $L^{\cstar}(\Omega)$. Then
\[
Q_{\boldsymbol\mu,\mathbf a}(u_n)-\lambda\|u_n\|^2_{L^2}
=\|u_n\|^{\cstar}_{L^{\cstar}}+o(1)\longrightarrow\|u\|^{\cstar}_{L^{\cstar}}
=Q_{\boldsymbol\mu,\mathbf a}(u)-\lambda\|u\|^2_{L^2},
\]
the last equality by Lemma~\ref{lem:weaklimit}. Since $\|u_n\|_{L^2}\to\|u\|_{L^2}$,
this gives
$\|u_n\|^2_{\boldsymbol\mu,\mathbf a}\to\|u\|^2_{\boldsymbol\mu,\mathbf a}$, where
$\|\cdot\|_{\boldsymbol\mu,\mathbf a}$ is the norm of Proposition~\ref{prop:form}.
This gives us
$u_n\rightharpoonup u$ with respect to $\mathcal B_{\boldsymbol\mu,\mathbf a}$, since
$\|\cdot\|_{\boldsymbol\mu,\mathbf a}$ and $[\,\cdot\,]_s$ induce the same weak
topology. Thus, $\mathcal B_{\boldsymbol\mu,\mathbf a}(u_n,u)\to
\|u\|^2_{\boldsymbol\mu,\mathbf a}$ and
\[
\|u_n-u\|^2_{\boldsymbol\mu,\mathbf a}
=\|u_n\|^2_{\boldsymbol\mu,\mathbf a}
-2\,\mathcal B_{\boldsymbol\mu,\mathbf a}(u_n,u)
+\|u\|^2_{\boldsymbol\mu,\mathbf a}\longrightarrow0 ,
\]
so $[u_n-u]_s\to0$ by \eqref{eq:equiv}.
\end{proof}

Thus it remains only to prove \eqref{eq:dichotomy}.

\subsection{Localization of the Gagliardo seminorm}\label{sec:localization}

The remaining ingredient is Theorem~\ref{thm:star}, the localized form of the
statement that a weakly vanishing sequence satisfies the lower bound determined by
$S_*$. Its proof uses no profile decomposition, no classification of limit profiles,
and no passage to the Caffarelli-Silvestre extension \cite{CS}. The cutoff identity of
Lemma~\ref{lem:identity} gives an exact localization of the Gagliardo seminorm, while
the additional terms are controlled by \eqref{eq:Rbound} and thereby vanish along a
sequence converging to zero in $L^2$. We begin with the partition of unity adapted to
the poles.

\begin{lemma}[partition of unity]\label{lem:partition}
Let $0<\delta<\frac16\min\big\{d,\ \min_i\operatorname{dist}(a_i,\partial\Omega)\big\}$
and fix $\chi\in C^\infty([0,\infty);[0,1])$ with $\chi\equiv1$ on $[0,1]$ and
$\chi\equiv0$ on $[2,\infty)$. Put $\psi_i(x):=\chi\big(|x-a_i|/\delta\big)$ for
$i=1,\dots,k$ and
\[
\theta_i:=\sin\Big(\frac\pi2\psi_i\Big)\quad(1\le i\le k), \,
\theta_0:=\prod_{i=1}^{k}\cos\Big(\frac\pi2\psi_i\Big).
\]
Then $\theta_0,\dots,\theta_k$ are smooth, $[0,1]$-valued and Lipschitz, and
\begin{enumerate}
\item[(i)] $\sum_{i=0}^{k}\theta_i^2\equiv1$ on $\R^N$,
\item[(ii)] for $1\le i\le k$, $\theta_i\equiv1$ on $B_\delta(a_i)$ and
$\theta_i\equiv0$ outside $B_{2\delta}(a_i)$. The balls $B_{2\delta}(a_i)$ are
pairwise disjoint and contained in $\Omega$,
\item[(iii)] $\theta_0\equiv0$ on $\bigcup_iB_\delta(a_i)$ and $\theta_0\equiv1$
outside $\bigcup_iB_{2\delta}(a_i)$,
\item[(iv)] $\sum_{i=0}^{k}\theta_i^{\cstar}\equiv1$ on
$\R^N\setminus K_\delta$, where
$K_\delta:=\bigcup_{i=1}^k\big\{\delta\le|x-a_i|\le2\delta\big\}$.
\end{enumerate}
\end{lemma}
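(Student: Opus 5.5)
The plan is to verify each item directly from the definitions, starting with support and regularity properties. Each $\psi_i$ is smooth: it is $\chi$ composed with $|x-a_i|/\delta$, and $\chi$ is constant near $0$, so there is no singularity at $a_i$. The values satisfy $\psi_i\in[0,1]$, with $\psi_i\equiv1$ on $B_\delta(a_i)$ and $\psi_i\equiv0$ outside $B_{2\delta}(a_i)$. Hence $\theta_i=\sin(\frac\pi2\psi_i)$ and each factor $\cos(\frac\pi2\psi_i)$ are smooth and take values in $[0,1]$. Each $\psi_i$ is globally Lipschitz with constant $\|\chi'\|_\infty/\delta$, being constant outside a compact set. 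Since $\sin$ and $\cos$ are $1$-Lipschitz, each $\theta_i$ is Lipschitz. The product $\theta_0$ of $[0,1]$-valued Lipschitz functions is also Lipschitz, by the usual estimate $|fg(x)-fg(y)|\le|f(x)-f(y)|+|g(x)-g(y)|$.

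For (ii), the properties of $\theta_i$ follow from those of $\psi_i$, using $\sin(\pi/2)=1$ and $\sin0=0$. The balls are disjoint because $4\delta<d\le|a_i-a_j|$. They lie in $\Omega$ because $2\delta<\operatorname{dist}(a_i,\partial\Omega)$.

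The key observation for the remaining items is that, by disjointness, at each point $x$ at most one $\psi_i$ is nonzero. For (i) I would split into two cases. If $x\notin\bigcup_iB_{2\delta}(a_i)$, all $\psi_i(x)=0$, so $\theta_0(x)=1$ and $\theta_i(x)=0$ for $i\ge1$. If $x\in B_{2\delta}(a_j)$, then $\psi_i(x)=0$ for $i\ne j$, so those cosine factors equal $1$. This gives $\theta_0=\cos(\frac\pi2\psi_j)$ and $\theta_j=\sin(\frac\pi2\psi_j)$, while all other $\theta_i$ vanish. The sum of squares is then $\cos^2+\sin^2=1$. Item (iii) follows from the same case analysis: on $B_\delta(a_j)$ we have $\psi_j=1$, so $\theta_0=\cos(\pi/2)=0$, and outside the union of the larger balls $\theta_0=1$.

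For (iv), take $x\notin K_\delta$. Either $x$ lies outside $\bigcup_iB_{2\delta}(a_i)$, or $x\in B_\delta(a_j)$ for some $j$; note $K_\delta$ contains the closed annuli, so the complement consists exactly of these two regions. In the first region $(\theta_0,\dots,\theta_k)=(1,0,\dots,0)$. In the second, $\theta_j=1$ and every other $\theta_i$ vanishes. In both cases exactly one $\theta_i$ equals $1$ and the rest are $0$, so $\sum\theta_i^{\cstar}=1$.

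No step is a real obstacle. The only point needing care is the disjointness bookkeeping, which guarantees that in each region at most one angle $\frac\pi2\psi_j$ is active, together with the global Lipschitz bound for $\theta_0$.
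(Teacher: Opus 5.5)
Your proof is correct and follows essentially the same route as the paper: smoothness because $\chi$ is constant near $0$, disjointness and containment of the balls from $6\delta<\min\{d,\operatorname{dist}(a_i,\partial\Omega)\}$, and a pointwise case analysis in which at most one $\psi_j$ is nonzero, giving (i), (iii) and (iv). The only difference is cosmetic: you obtain the Lipschitz bound for $\theta_0$ from the product estimate for $[0,1]$-valued functions, whereas the paper simply notes that all first derivatives are bounded.
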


\begin{proof}
Since $\chi\equiv1$ near $0$, each $\psi_i$ is constant near $a_i$ and hence smooth,
it is supported in $\overline{B_{2\delta}(a_i)}$, and $6\delta<d$ makes these sets
pairwise disjoint and contained in $\Omega$. All $\theta_i$ are therefore smooth,
take values in $[0,1]$, and have bounded first derivatives, so they are Lipschitz.

For (i), fix $x$. If $x$ lies outside every $\operatorname{supp}\psi_i$, then
$\theta_i(x)=0$ for $i\ge1$ and $\theta_0(x)=1$. Otherwise $x$ lies in exactly one
$\operatorname{supp}\psi_j$. So $\theta_i(x)=0$ for $i\ge1$, $i\neq j$. Also, we have
$\theta_0(x)=\cos\big(\frac\pi2\psi_j(x)\big)$, so that
$\sum_i\theta_i(x)^2=\sin^2+\cos^2=1$. Items (ii) and (iii) are immediate from
$\psi_i\equiv1$ on $B_\delta(a_i)$ and $\psi_i\equiv0$ outside $B_{2\delta}(a_i)$. For
(iv), a point outside $K_\delta$ either lies outside every $\operatorname{supp}\psi_i$,
where $\theta_0=1$ and the others vanish, or lies in some $B_\delta(a_j)$, where
$\theta_j=1$ and the others vanish.
\end{proof}

This partition and the cutoff identity give a localized estimate for the seminorm with an $L^2$ controlled error.

\begin{lemma}[localization estimate]\label{lem:IMS}
Let $\theta_0,\dots,\theta_k:\R^N\to[0,1]$ be Lipschitz, with constants
$L_0,\dots,L_k$, and suppose $\sum_{i=0}^k\theta_i^2\equiv1$. Then, for every
$v\in\dHs\cap L^2(\R^N)$,
\[
\Big|\,[v]_s^2-\sum_{i=0}^{k}\big[\theta_iv\big]_s^2\,\Big|
\ \le\ \frac{C_{N,s}\,c_{N,s}}{2}\Big(\sum_{i=0}^kL_i^{2s}\Big)\,\|v\|^2_{L^2(\R^N)} .
\]
\end{lemma}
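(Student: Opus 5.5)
The plan is to apply the cutoff identity of Lemma~\ref{lem:identity} to each $\theta_i$ separately, sum over $i$, and let the partition relation $\sum_i\theta_i^2\equiv1$ collapse the main terms. Each $\theta_i$ is $[0,1]$-valued and Lipschitz with constant $L_i$, and $v\in\dHs\cap L^2(\R^N)$. So Lemma~\ref{lem:identity} gives $\theta_iv,\theta_i^2v\in\dHs$ and
\[
[\theta_iv]_s^2=\mathcal E\big(v,\theta_i^2v\big)+R_{\theta_i}(v),\qquad
|R_{\theta_i}(v)|\le\frac{C_{N,s}}{2}\,c_{N,s}\,L_i^{2s}\,\|v\|^2_{L^2(\R^N)} .
\]
If some $L_i=0$, then $\theta_i$ is constant and $R_{\theta_i}(v)=0$ directly, so the bound still holds with $L_i^{2s}=0$.

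Summing over $i=0,\dots,k$ and using the bilinearity of $\mathcal E$ in its second argument on $\dHs$ gives
\[
\sum_{i=0}^k\mathcal E(v,\theta_i^2v)=\mathcal E\Big(v,\sum_{i=0}^k\theta_i^2v\Big)=\mathcal E(v,v)=[v]_s^2 .
\]
All the terms are finite by Lemma~\ref{lem:identity}, so the sum can be split term by term. Hence
\[
\sum_{i=0}^k[\theta_iv]_s^2-[v]_s^2=\sum_{i=0}^kR_{\theta_i}(v).
\]
The triangle inequality together with \eqref{eq:Rbound} then yields the asserted bound with constant $\frac{C_{N,s}c_{N,s}}{2}\sum_iL_i^{2s}$.

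The only delicate point is that the identity $\sum_i\mathcal E(v,\theta_i^2v)=\mathcal E(v,v)$ requires each $\theta_i^2v$ to lie in $\dHs$, so that $\mathcal E$ is a genuine absolutely convergent bilinear form there. This is exactly what Lemma~\ref{lem:product} and Lemma~\ref{lem:identity} provide. Alternatively, one can argue pointwise under the integral: since $\sum_i\theta_i^2\equiv1$,
\[
\sum_i\big(v(x)-v(y)\big)\big(\theta_i^2(x)v(x)-\theta_i^2(y)v(y)\big)=\big(v(x)-v(y)\big)^2 ,
\]
and each summand is absolutely integrable, so the summation can be carried out before integrating. No sign condition on $v$ is needed anywhere.
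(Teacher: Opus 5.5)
Your proposal is correct and follows the paper's proof: apply the cutoff identity of Lemma~\ref{lem:identity} to each $\theta_i$, sum using bilinearity of $\mathcal E$ and $\sum_i\theta_i^2v=v$, and bound each remainder by \eqref{eq:Rbound}. Your separate treatment of the degenerate case $L_i=0$, where Lemma~\ref{lem:elem} as written uses $L>0$, is a small point of extra care that the paper leaves implicit.
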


\begin{proof}
By Lemma~\ref{lem:identity}, $\theta_iv\in\dHs$ and
$[\theta_iv]_s^2=\mathcal E(v,\theta_i^2v)+R_{\theta_i}(v)$ for each $i$. Summing and
using bilinearity of $\mathcal E$ together with $\sum_i\theta_i^2v=v$,
\[
\sum_{i=0}^{k}[\theta_iv]_s^2
=\mathcal E\Big(v,\sum_{i=0}^k\theta_i^2v\Big)+\sum_{i=0}^kR_{\theta_i}(v)
=[v]_s^2+\sum_{i=0}^kR_{\theta_i}(v),
\]
and \eqref{eq:Rbound} bounds each remainder.
\end{proof}

\subsection{The critical-norm inequality}\label{sec:dicho}

We can now establish the required inequality.

\begin{proof}[Proof of Theorem~\ref{thm:star}]
By the compact embedding $X\hookrightarrow L^2(\Omega)$,  $w_n\to0$ in $L^2(\Omega)$. We
fix a subsequence along which $Q_{\boldsymbol\mu,\mathbf a}(w_n)
-S_*\|w_n\|^2_{L^{\cstar}(\Omega)}$ converges to the left-hand side of
\eqref{eq:star}. Any further subsequence has the same limit, so it suffices to show
that this limit is nonnegative.

Since $w_n\rightharpoonup0$ in $X$ we have $\sup_n[w_n]_s<\infty$, so the sequences
$\big(Q_{\boldsymbol\mu,\mathbf a}(w_n)\big)$ and
$\big(\|w_n\|^{\cstar}_{L^{\cstar}}\big)$ are bounded. Extracting further, we may
therefore assume that $Q_{\boldsymbol\mu,\mathbf a}(w_n)$ converges and that
$\|w_n\|^{\cstar}_{L^{\cstar}}\to\nu$ for some $\nu\ge0$. Moreover
$\overline\Omega$ is a compact metric space, so $C(\overline\Omega)$ is separable and
$\mathcal M(\overline\Omega)=C(\overline\Omega)^{*}$ by the Riesz representation
theorem. Hence, by the Banach-Alaoglu theorem and after one further extraction,
\[
|w_n|^{\cstar}\,\mathcal L^N\!\restriction_\Omega\ \rightharpoonup^*\ \varrho
\quad\text{in }\mathcal M(\overline\Omega),
\, \varrho\ge0,\quad\varrho(\overline\Omega)=\nu<\infty ,
\]
the total mass being $\nu$ since the constant function $1$ lies in
$C(\overline\Omega)$. Fix $\delta$
as in Lemma~\ref{lem:partition} and let $\theta_0,\dots,\theta_k$ and $K_\delta$ be as
there. Note that $w_n\in\dHs\cap L^2(\R^N)$, since $w_n\in X$ vanishes outside
$\Omega$.

\emph{Step 1.} By Lemma~\ref{lem:IMS},
\[
[w_n]_s^2=\sum_{i=0}^{k}\big[\theta_iw_n\big]_s^2+o(1),
\]
since $\|w_n\|_{L^2}\to0$ while the Lipschitz constants of the $\theta_i$ depend
only on $\delta$ and $\chi$.

\emph{Step 2.} Fix $i\in\{1,\dots,k\}$. Since
$\sum_l\theta_l^2\equiv1$,
\[
\int_\Omega\frac{w_n^2}{|x-a_i|^{2s}}\,dx
=\sum_{l=0}^{k}\int_\Omega\frac{(\theta_lw_n)^2}{|x-a_i|^{2s}}\,dx .
\]
For $l\ge1$ with $l\neq i$ the function $\theta_l$ vanishes outside
$B_{2\delta}(a_l)$, where $|x-a_i|\ge d-2\delta>\frac d2$, and $\theta_0$ vanishes on
$B_\delta(a_i)$. In both cases the weight $\theta_l^2|x-a_i|^{-2s}$ is bounded by a
constant depending only on $\delta$ and $d$, so the corresponding integral is at most
$C(\delta,d)\|w_n\|^2_{L^2(\Omega)}\to0$. Hence
\[
\int_\Omega\frac{w_n^2}{|x-a_i|^{2s}}\,dx
=\int_{\R^N}\frac{(\theta_iw_n)^2}{|x-a_i|^{2s}}\,dx+o(1).
\]

\emph{Step 3.} Combining Steps 1 and 2 with \eqref{eq:Q},
\[
Q_{\boldsymbol\mu,\mathbf a}(w_n)
=\big[\theta_0w_n\big]_s^2
+\sum_{i=1}^{k}\Big(\big[\theta_iw_n\big]_s^2
-\mu_i\int_{\R^N}\frac{(\theta_iw_n)^2}{|x-a_i|^{2s}}dx\Big)+o(1) .
\]
Each $\theta_iw_n$ lies in $\dHs$, so the definition \eqref{eq:Smu} of $S_{\mu_i}$,
applied after the translation $x\mapsto x-a_i$, and Theorem~\ref{thm:sobolev} give
\[
\big[\theta_iw_n\big]_s^2-\mu_i\int_{\R^N}\frac{(\theta_iw_n)^2}{|x-a_i|^{2s}}dx
\ \ge\ S_{\mu_i}\big\|\theta_iw_n\big\|^2_{L^{\cstar}},
\,
\big[\theta_0w_n\big]_s^2\ \ge\ S\big\|\theta_0w_n\big\|^2_{L^{\cstar}} .
\]
Since $S_*=\min\{S,S_{\mu_1},\dots,S_{\mu_k}\}$, writing
$A_i^{(n)}:=\int_\Omega\theta_i^{\cstar}|w_n|^{\cstar}dx$ we obtain
\begin{equation}\label{eq:step3}
Q_{\boldsymbol\mu,\mathbf a}(w_n)\ \ge\
S_*\sum_{i=0}^{k}\big(A_i^{(n)}\big)^{2/\cstar}+o(1) .
\end{equation}

\emph{Step 4.} Since $2/\cstar\in(0,1)$, the map
$t\mapsto t^{2/\cstar}$ is subadditive on $[0,\infty)$, so
$\sum_i\big(A_i^{(n)}\big)^{2/\cstar}\ge\big(\sum_iA_i^{(n)}\big)^{2/\cstar}$. By
Lemma~\ref{lem:partition}(iv), $\sum_i\theta_i^{\cstar}=1$ off $K_\delta$ and
$\sum_i\theta_i^{\cstar}\ge0$ everywhere, so
\[
\sum_{i=0}^kA_i^{(n)}\ \ge\ \int_\Omega|w_n|^{\cstar}dx-\int_{K_\delta}|w_n|^{\cstar}dx .
\]

\emph{Step 5.} Let
$\Phi:[0,\infty)\to[0,1]$ be continuous with $\Phi(0)=0$, $\Phi\equiv1$ on $[1,2]$ and
$\Phi\equiv0$ on $[3,\infty)$, and set
$\phi_\delta(x):=\sum_{i=1}^k\Phi\big(|x-a_i|/\delta\big)$, supports are disjoint since $6\delta<d$, so $\phi_\delta\in C(\overline\Omega;[0,1])$
and $\phi_\delta\equiv1$ on $K_\delta$. Therefore
\[
\limsup_n\int_{K_\delta}|w_n|^{\cstar}dx
\ \le\ \lim_n\int_\Omega\phi_\delta|w_n|^{\cstar}dx=\int_{\overline\Omega}\phi_\delta\,d\varrho
\ =:\ \tau(\delta) .
\]
As $\delta\to0^+$ we have $\phi_\delta\to0$ pointwise on $\overline\Omega$: at a point
$x\notin\{a_1,\dots,a_k\}$ one has $\phi_\delta(x)=0$ once $3\delta<\min_i|x-a_i|$,
and $\phi_\delta(a_i)=\Phi(0)=0$. Since $0\le\phi_\delta\le1$ and $\varrho$ is finite,
dominated convergence gives $\tau(\delta)\to0$ as $\delta\to0^+$.

\emph{Conclusion.} Fix $\delta$. Combining Steps 4 and 5 with \eqref{eq:step3} and
using that $t\mapsto t^{2/\cstar}$ is nondecreasing,
\[
\lim_n Q_{\boldsymbol\mu,\mathbf a}(w_n)\ \ge\
S_*\Big(\big(\nu-\tau(\delta)\big)_+\Big)^{2/\cstar},
\]
where $t_+=\max\{t,0\}$. The left-hand side does not depend on $\delta$, so letting
$\delta\to0^+$ gives
$\lim_nQ_{\boldsymbol\mu,\mathbf a}(w_n)\ge S_*\nu^{2/\cstar}
=S_*\lim_n\|w_n\|^2_{L^{\cstar}}$, which is \eqref{eq:star}.
\end{proof}

The required dichotomy now follows from Theorem~\ref{thm:star}.

\begin{corollary}[the dichotomy]\label{cor:dicho}
Let $\lambda<\lambda_1(\boldsymbol\mu,\mathbf a)$, let $(u_n)$ be a Palais-Smale
sequence for $J_\lambda$, and let $u_n\rightharpoonup u$ in $X$ with
$\|w_n\|^{\cstar}_{L^{\cstar}}\to\nu$, $w_n=u_n-u$, as in
Proposition~\ref{prop:reduction}. Then
\[
\nu=0 \,\text{ or }\, \nu\ \ge\ S_*^{N/(2s)} .
\]
\end{corollary}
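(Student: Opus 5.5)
The argument applies Theorem~\ref{thm:star} to $w_n=u_n-u$ and combines it with the Palais-Smale identity for $(u_n)$ and the critical point identity for $u$. By Proposition~\ref{prop:reduction} we have $w_n\rightharpoonup0$ in $X$, $w_n\to0$ in $L^2(\Omega)$, and $\|w_n\|^{\cstar}_{L^{\cstar}}\to\nu$.

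First, I would establish the splitting of the quadratic part. Since $\mathcal B_{\boldsymbol\mu,\mathbf a}$ is an inner product with the same weak topology as $X$ (Proposition~\ref{prop:form}), expanding $Q_{\boldsymbol\mu,\mathbf a}(u+w_n)$ gives
\[
Q_{\boldsymbol\mu,\mathbf a}(u_n)=Q_{\boldsymbol\mu,\mathbf a}(u)+Q_{\boldsymbol\mu,\mathbf a}(w_n)+o(1),
\]
and likewise $\|u_n\|^2_{L^2}=\|u\|^2_{L^2}+o(1)$.

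Second, I would combine this with two identities. From $J'_\lambda(u_n)[u_n]=o(1)$, which uses boundedness from Lemma~\ref{lem:C1}, we get $Q_{\boldsymbol\mu,\mathbf a}(u_n)-\lambda\|u_n\|^2_{L^2}=\|u_n\|^{\cstar}_{L^{\cstar}}+o(1)$. Lemma~\ref{lem:weaklimit} gives $Q_{\boldsymbol\mu,\mathbf a}(u)-\lambda\|u\|^2_{L^2}=\|u\|^{\cstar}_{L^{\cstar}}$. The Brezis-Lieb lemma gives $\|u_n\|^{\cstar}_{L^{\cstar}}=\|u\|^{\cstar}_{L^{\cstar}}+\|w_n\|^{\cstar}_{L^{\cstar}}+o(1)$. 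Subtracting these yields
\[
Q_{\boldsymbol\mu,\mathbf a}(w_n)=\|w_n\|^{\cstar}_{L^{\cstar}}+o(1)\longrightarrow\nu .
\]

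Third, I would apply Theorem~\ref{thm:star} to $(w_n)$. Since $Q_{\boldsymbol\mu,\mathbf a}(w_n)\to\nu$ and $\|w_n\|^2_{L^{\cstar}}\to\nu^{2/\cstar}$, it gives $\nu\ge S_*\nu^{2/\cstar}$. If $\nu>0$, dividing by $\nu^{2/\cstar}$ gives $\nu^{1-2/\cstar}\ge S_*$. Since $1-2/\cstar=2s/N$, this is $\nu\ge S_*^{N/(2s)}$, which is the claimed dichotomy.

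The only delicate point is the first step: the splitting of $Q_{\boldsymbol\mu,\mathbf a}$ needs the cross term $\mathcal B_{\boldsymbol\mu,\mathbf a}(u,w_n)\to0$. This holds because $v\mapsto\mathcal B_{\boldsymbol\mu,\mathbf a}(u,v)$ is a bounded linear functional on $X$ and $w_n\rightharpoonup0$. Everything else is bookkeeping with $o(1)$ terms along the fixed subsequence.
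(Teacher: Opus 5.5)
Your proposal is correct and follows the paper's proof essentially step for step. You split $Q_{\boldsymbol\mu,\mathbf a}(u_n)$ with a vanishing cross term $\mathcal B_{\boldsymbol\mu,\mathbf a}(u,w_n)$, subtract the Palais--Smale and critical-point identities using Brezis--Lieb to get $Q_{\boldsymbol\mu,\mathbf a}(w_n)\to\nu$, and then apply Theorem~\ref{thm:star} to obtain $\nu\ge S_*\nu^{2/\cstar}$, exactly as the paper does.
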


\begin{proof}
As in the proof of Proposition~\ref{prop:reduction}, $J'_\lambda(u_n)[u_n]=o(1)$ and
$J'_\lambda(u)[u]=0$. Subtracting, and using
$Q_{\boldsymbol\mu,\mathbf a}(u_n)=Q_{\boldsymbol\mu,\mathbf a}(u)
+Q_{\boldsymbol\mu,\mathbf a}(w_n)+2\mathcal B_{\boldsymbol\mu,\mathbf a}(u,w_n)$ with
$\mathcal B_{\boldsymbol\mu,\mathbf a}(u,w_n)\to0$, together with
$\|u_n\|_{L^2}\to\|u\|_{L^2}$. The Brezis-Lieb lemma \cite{BL} gives
$Q_{\boldsymbol\mu,\mathbf a}(w_n)-\|w_n\|^{\cstar}_{L^{\cstar}}\to0$, so
$Q_{\boldsymbol\mu,\mathbf a}(w_n)\to\nu$. Since $w_n\rightharpoonup0$ in $X$,
Theorem~\ref{thm:star} yields
$\nu\ge S_*\nu^{2/\cstar}$. If $\nu>0$, then $\nu^{1-2/\cstar}\ge S_*$, and
$1-\frac{2}{\cstar}=\frac{2s}{N}$.
\end{proof}

\subsection{Compactness below the critical level}\label{sec:compact}

The dichotomy combines with Proposition~\ref{prop:reduction} to give relative
compactness below $c_*$.

\begin{corollary}[compactness below the threshold]\label{cor:compact}
Let $\lambda<\lambda_1(\boldsymbol\mu,\mathbf a)$. Every Palais-Smale sequence for
$J_\lambda$ at a level $c<c_*$ has a subsequence converging strongly in $X$.
\end{corollary}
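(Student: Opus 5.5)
This follows directly from Proposition~\ref{prop:reduction} combined with Corollary~\ref{cor:dicho}. Let $(u_n)\subset X$ be a Palais-Smale sequence for $J_\lambda$ at level $c<c_*$.

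First, Lemma~\ref{lem:C1} shows that $(u_n)$ is bounded in $X$, and this is where $\lambda<\lambda_1(\boldsymbol\mu,\mathbf a)$ enters, through the coercivity constant $\theta(\lambda)$ of Theorem~\ref{thm:A}(v). We then extract the subsequence given by Proposition~\ref{prop:reduction}. Along it, $u_n\rightharpoonup u$ in $X$, $w_n=u_n-u\to0$ in $L^q(\Omega)$ for every $q<\cstar$, and $\|w_n\|^{\cstar}_{L^{\cstar}}\to\nu\ge0$.

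Second, we invoke Corollary~\ref{cor:dicho} for this same subsequence. It states that either $\nu=0$ or $\nu\ge S_*^{N/(2s)}$. This is exactly hypothesis \eqref{eq:dichotomy} of Proposition~\ref{prop:reduction}. That corollary rests on Theorem~\ref{thm:star} applied to $w_n\rightharpoonup0$, together with the Brezis-Lieb splitting of $Q_{\boldsymbol\mu,\mathbf a}(u_n)$.

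Third, since $c<c_*$, the last part of Proposition~\ref{prop:reduction} applies. The energy identity $c=J_\lambda(u)+\frac sN\nu$, with $J_\lambda(u)\ge0$, rules out $\nu\ge S_*^{N/(2s)}$, so $\nu=0$. Hence $u_n\to u$ in $L^{\cstar}(\Omega)$. Using the equivalent Hilbert norm $\|\cdot\|_{\boldsymbol\mu,\mathbf a}$, norm convergence together with weak convergence then gives $[u_n-u]_s\to0$.

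There is no genuine obstacle, since all the analytic work is contained in Theorem~\ref{thm:star}. The only point to check is that the subsequence produced in Proposition~\ref{prop:reduction} is the one to which Corollary~\ref{cor:dicho} is applied. This holds because the corollary is stated for an arbitrary such subsequence.
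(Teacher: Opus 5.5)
Your proposal is correct and follows the paper's proof, which is the one-line observation that Corollary~\ref{cor:dicho} supplies hypothesis \eqref{eq:dichotomy} of Proposition~\ref{prop:reduction}. The extra steps you unpack (boundedness via Theorem~\ref{thm:A}(v), the energy splitting ruling out $\nu\ge S_*^{N/(2s)}$, and the upgrade to strong convergence in $X$) are already contained in those two results.
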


\begin{proof}
Corollary~\ref{cor:dicho} gives condition \eqref{eq:dichotomy} of Proposition~\ref{prop:reduction}.
\end{proof}

\section{Attainment and proof of the main theorem}\label{sec:existence}

\subsection{Attainment below $S_*$}\label{sec:attain}

The critical-norm inequality also yields attainment by direct minimization.

\begin{theorem}\label{thm:existence}
Assume \eqref{eq:H} and $\lambda<\lambda_1(\boldsymbol\mu,\mathbf a)$, and recall
\[
S_{\lambda,\boldsymbol\mu,\mathbf a}(\Omega)
=\inf_{v\in X\setminus\{0\}}R_\lambda(v)\in(0,\infty).
\]
If $S_{\lambda,\boldsymbol\mu,\mathbf a}(\Omega)<S_*$, then the infimum is attained,
and \eqref{eq:P} has a weak solution $u$ with $u>0$ a.e.\ in $\Omega$ and
\[
J_\lambda(u)=\frac sN\,S_{\lambda,\boldsymbol\mu,\mathbf a}(\Omega)^{N/(2s)}<c_* .
\]
Moreover, $u$ is of least energy $J_\lambda(u)\le J_\lambda(v)$ for every nontrivial
weak solution $v$ of \eqref{eq:P}.
\end{theorem}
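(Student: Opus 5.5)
Write $S_\lambda:=S_{\lambda,\boldsymbol\mu,\mathbf a}(\Omega)$. The plan is to minimize directly with Theorem~\ref{thm:star} as the compactness tool. First note that $S_\lambda\in(0,\infty)$: positivity follows from Theorem~\ref{thm:A}(v) and Theorem~\ref{thm:sobolev}, since $R_\lambda(v)\ge\theta[v]_s^2/\|v\|^2_{L^{\cstar}}\ge\theta S$.

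Take a minimizing sequence $(u_n)\subset X$ with $\|u_n\|_{L^{\cstar}(\Omega)}=1$ and $Q_{\boldsymbol\mu,\mathbf a}(u_n)-\lambda\|u_n\|_{L^2}^2\to S_\lambda$. By Theorem~\ref{thm:A}(v) it is bounded in $X$. Up to a subsequence, $u_n\rightharpoonup u$ in $X$, $u_n\to u$ in $L^2(\Omega)$ and a.e., and $\|w_n\|^{\cstar}_{L^{\cstar}}\to\nu$, where $w_n:=u_n-u$. We then split the numerator and the denominator:
\begin{itemize}
\item $Q_{\boldsymbol\mu,\mathbf a}(u_n)=Q_{\boldsymbol\mu,\mathbf a}(u)+Q_{\boldsymbol\mu,\mathbf a}(w_n)+o(1)$, because $\mathcal B_{\boldsymbol\mu,\mathbf a}(u,w_n)\to0$ by Proposition~\ref{prop:form};
\item $\|u_n\|_{L^2}^2=\|u\|_{L^2}^2+o(1)$;
\item by Brezis--Lieb, $1=\|u\|^{\cstar}_{L^{\cstar}}+\nu$.
\end{itemize}
Theorem~\ref{thm:star} applied to $w_n\rightharpoonup0$ gives $\liminf Q_{\boldsymbol\mu,\mathbf a}(w_n)\ge S_*\nu^{2/\cstar}$. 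Together with the definition of $S_\lambda$, applied to $u$, this yields
\[
S_\lambda\ \ge\ S_\lambda\big(1-\nu\big)^{2/\cstar}+S_*\,\nu^{2/\cstar}.
\]
The function $t\mapsto t^{2/\cstar}$ is strictly concave with value $0$ at $0$, so $(1-\nu)^{2/\cstar}+\nu^{2/\cstar}\ge1$, with equality only if $\nu\in\{0,1\}$. Using $S_*>S_\lambda$, the case $\nu\in(0,1]$ would give $S_\lambda>S_\lambda$, a contradiction: for $\nu=1$ the right side is $S_*>S_\lambda$; for $\nu\in(0,1)$ the right side is at least $S_\lambda\big((1-\nu)^{2/\cstar}+\nu^{2/\cstar}\big)>S_\lambda$. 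Hence $\nu=0$, so $\|u\|_{L^{\cstar}}=1$. Weak lower semicontinuity of $Q_{\boldsymbol\mu,\mathbf a}$ then shows that $u$ attains $S_\lambda$. This step is the crux, and it relies on the strict inequality $S_\lambda<S_*$ together with Theorem~\ref{thm:star}.

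Next we get positivity and the equation. Replacing $u$ by $|u|$ keeps it a minimizer, since $[\,|u|\,]_s\le[u]_s$ and the other terms are unchanged; so assume $u\ge0$. The Euler--Lagrange equation is $\mathcal B_{\boldsymbol\mu,\mathbf a}(u,\varphi)-\lambda\int u\varphi=S_\lambda\int u^{\cstar-1}\varphi$. Setting $\tilde u:=S_\lambda^{1/(\cstar-2)}u$ produces a weak solution of \eqref{eq:P}. For strict positivity, apply Theorem~\ref{thm:smp} with $c:=\min\{\lambda,0\}$, a constant and hence in $L^1_{\mathrm{loc}}$. For $\varphi\ge0$ supported in $\Omega$, the Hardy and critical terms are nonnegative, so $\mathcal E(\tilde u,\varphi)\ge\lambda\int\tilde u\varphi\ge\int c\,\tilde u\varphi$. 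Since $\tilde u\ne0$, we conclude $\tilde u>0$ a.e.\ in $\Omega$. This handles negative $\lambda$, which is needed here.

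Finally the energy. Testing the equation with $\tilde u$ gives $Q-\lambda\|\tilde u\|^2=\|\tilde u\|^{\cstar}_{L^{\cstar}}$. The identity $R_\lambda(\tilde u)=S_\lambda$ then forces $\|\tilde u\|^{\cstar}_{L^{\cstar}}=S_\lambda^{N/(2s)}$, so $J_\lambda(\tilde u)=\frac sN S_\lambda^{N/(2s)}<c_*$. For least energy, let $v$ be any nontrivial solution. The same identity gives $\|v\|^{\cstar-2}_{L^{\cstar}}=R_\lambda(v)\ge S_\lambda$, hence $J_\lambda(v)=\frac sN\|v\|^{\cstar}_{L^{\cstar}}\ge\frac sN S_\lambda^{N/(2s)}$.
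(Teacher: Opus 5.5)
Your proposal is correct and follows essentially the same route as the paper. The steps match one for one:
\begin{enumerate}
\item minimize over the unit sphere of $L^{2^{*}_{s}}(\Omega)$;
\item split $Q_{\boldsymbol\mu,\mathbf a}(u_n)=Q_{\boldsymbol\mu,\mathbf a}(u)+Q_{\boldsymbol\mu,\mathbf a}(w_n)+o(1)$ and use Brezis--Lieb;
\item apply Theorem~\ref{thm:star} to $w_n=u_n-u$ to get $S_\lambda\ge S_\lambda(1-\nu)^{2/2^{*}_{s}}+S_*\nu^{2/2^{*}_{s}}$;
\item pass to $|u|$, write the Euler--Lagrange equation and rescale;
\item apply the strong minimum principle with $c=\min\{\lambda,0\}$;
\item use the identity $J_\lambda(v)=\frac sN R_\lambda(v)^{N/(2s)}$ for least energy.
\end{enumerate}
The differences are cosmetic. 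You rule out $\nu>0$ by strict concavity, case by case, where the paper rearranges to $S_\lambda\ge S_\lambda+(S_*-S_\lambda)(1-t)^{p}$. You conclude attainment by weak lower semicontinuity, where the paper reads it off the splitting identity.
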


\begin{proof}
Write $S_\lambda:=S_{\lambda,\boldsymbol\mu,\mathbf a}(\Omega)$ so that $S_\lambda>0$
follows from Theorem~\ref{thm:A}(v) and Theorem~\ref{thm:sobolev}. Let $(v_n)\subset X$
with $\|v_n\|_{L^{\cstar}(\Omega)}=1$ and
$Q_{\boldsymbol\mu,\mathbf a}(v_n)-\lambda\|v_n\|^2_{L^2}\to S_\lambda$. By
Theorem~\ref{thm:A}(v) the sequence is bounded in $X$, so up to a subsequence
$v_n\rightharpoonup v$ in $X$, $v_n\to v$ in $L^2(\Omega)$ and a.e. Put $w_n:=v_n-v$,
so that $w_n\rightharpoonup0$ in $X$.

Similarly as in Corollary~\ref{cor:dicho},
$Q_{\boldsymbol\mu,\mathbf a}(v_n)=Q_{\boldsymbol\mu,\mathbf a}(v)
+Q_{\boldsymbol\mu,\mathbf a}(w_n)+o(1)$ and $\|v_n\|^2_{L^2}\to\|v\|^2_{L^2}$, while
the Brezis-Lieb lemma gives
$1=\|v\|^{\cstar}_{L^{\cstar}}+\|w_n\|^{\cstar}_{L^{\cstar}}+o(1)$. Passing to a
further subsequence, let $t:=\|v\|^{\cstar}_{L^{\cstar}}\in[0,1]$ and
$\|w_n\|^{\cstar}_{L^{\cstar}}\to1-t$. The limit $\lim_nQ_{\boldsymbol\mu,\mathbf a}(w_n)$
exists, since $Q_{\boldsymbol\mu,\mathbf a}(v_n)-\lambda\|v_n\|^2_{L^2}\to S_\lambda$
and every other term in the splitting converges. By the definition of $S_\lambda$ applied to $v$
(trivially true if $v=0$) and by Theorem~\ref{thm:star},
\[
S_\lambda=\Big(Q_{\boldsymbol\mu,\mathbf a}(v)-\lambda\|v\|^2_{L^2}\Big)
+\lim_nQ_{\boldsymbol\mu,\mathbf a}(w_n)
\ \ge\ S_\lambda\,t^{2/\cstar}+S_*\,(1-t)^{2/\cstar}.
\]
Since $p:=2/\cstar\in(0,1)$ we have $t^{p}+(1-t)^{p}\ge1$, so that
\[
S_\lambda\ \ge\ S_\lambda\big(t^{p}+(1-t)^{p}\big)+(S_*-S_\lambda)(1-t)^{p}
\ \ge\ S_\lambda+(S_*-S_\lambda)(1-t)^{p} .
\]
As $S_*>S_\lambda$ this implies $t=1$. Hence $\|v\|_{L^{\cstar}}=1$, so $v\neq0$, and
the above identity becomes
$S_\lambda=\big(Q_{\boldsymbol\mu,\mathbf a}(v)-\lambda\|v\|^2_{L^2}\big)
+\lim_nQ_{\boldsymbol\mu,\mathbf a}(w_n)$ with both terms at least $S_\lambda$ and
$0$ respectively. Therefore
$Q_{\boldsymbol\mu,\mathbf a}(v)-\lambda\|v\|^2_{L^2}=S_\lambda$ and $v$ is a
minimizer.

Replacing $v$ by $|v|$ changes neither $\|v\|_{L^{\cstar}}$, nor $\|v\|_{L^2}$, nor
the Hardy terms, and does not increase $[\,\cdot\,]_s$, so $|v|$ is a minimizer as
well and we may assume $v\ge0$. As in the proof of Theorem~\ref{thm:A}(iii), the
quotient $R_\lambda$ is of class $C^1$ on $X\setminus\{0\}$ and its derivative
vanishes at $v$, that is
\[
\mathcal B_{\boldsymbol\mu,\mathbf a}(v,\varphi)-\lambda\int_\Omega v\varphi
=S_\lambda\int_\Omega v^{\cstar-1}\varphi \, \text{for all }\varphi\in X .
\]
Setting $u:=\kappa v$ with $\kappa:=S_\lambda^{1/(\cstar-2)}
=S_\lambda^{(N-2s)/(4s)}>0$ turns this into the weak formulation of \eqref{eq:P}, and
$u\ge0$, $u\neq0$. Using $\kappa^{\cstar}=\kappa^2S_\lambda$,
\[
J_\lambda(u)=\Big(\frac12-\frac1{\cstar}\Big)\kappa^2S_\lambda
=\frac sN\,S_\lambda^{\,1+\frac{2}{\cstar-2}}=\frac sN\,S_\lambda^{N/(2s)}
\ <\ \frac sN S_*^{N/(2s)}=c_* .
\]

Finally we show $u>0$ a.e.\ in $\Omega$, for every $\lambda<\lambda_1$. Put
$c:=\min\{\lambda,0\}\le0$, a constant so $c\in L^1_{\mathrm{loc}}(\Omega)$. For
$\varphi\in X$ with $\varphi\ge0$, the weak formulation and $\mu_i\ge0$, $u\ge0$,
$\varphi\ge0$ give
\[
\mathcal E(u,\varphi)
=\sum_i\mu_i\int_\Omega\frac{u\varphi}{|x-a_i|^{2s}}
+\lambda\int_\Omega u\varphi+\int_\Omega u^{\cstar-1}\varphi
\ \ge\ \lambda\int_\Omega u\varphi\ \ge\ \int_\Omega c\,u\varphi ,
\]
the last step since $\lambda\ge c$ and $u\varphi\ge0$. Since $u\in\dHs$ vanishes
outside $\Omega$ and $u\ge0$ a.e.\ in $\R^N$, Theorem~\ref{thm:smp} applies with
$D=\Omega$, either $u>0$ a.e.\ in $\Omega$ or $u=0$ a.e.\ in $\R^N$. Since $u\neq0$, we only have the first possibility.

Let $v$ be any nontrivial weak solution of \eqref{eq:P}. Testing the weak
formulation with $v$ gives
$Q_{\boldsymbol\mu,\mathbf a}(v)-\lambda\|v\|^2_{L^2}=\|v\|^{\cstar}_{L^{\cstar}}$,
so $R_\lambda(v)=\|v\|^{\cstar-2}_{L^{\cstar}}$. Substituting this into \eqref{eq:J}
and using $\frac12-\frac1{\cstar}=\frac sN$ we have,
\[
J_\lambda(v)=\frac sN\|v\|^{\cstar}_{L^{\cstar}}
=\frac sN\,R_\lambda(v)^{\cstar/(\cstar-2)}
=\frac sN\,R_\lambda(v)^{N/(2s)}\ \ge\ \frac sN\,S_\lambda^{N/(2s)}=J_\lambda(u),
\]
since $R_\lambda(v)\ge S_\lambda$ and $\frac{\cstar}{\cstar-2}=\frac{N}{2s}$.
\end{proof}

\subsection[Proof of the main theorem]{Proof of Theorem~\protect\ref{thm:main}}\label{sec:proofmain}

\begin{proof}[Proof of Theorem~\ref{thm:main}]
Under the stated hypotheses, Corollary~\ref{cor:strict} gives
$S_{\lambda,\boldsymbol\mu,\mathbf a}(\Omega)<S_*$ for every
$\lambda_{\mathrm{geo}}<\lambda<\lambda_1(\boldsymbol\mu,\mathbf a)$. The result then
follows from Theorem~\ref{thm:existence}. When at least two masses are positive,
Corollary~\ref{cor:strict} also gives $\lambda_{\mathrm{geo}}<0$, so the existence
range includes $\lambda=0$ and a nonempty interval of negative values.
\end{proof}

\subsection{The upper endpoint and the whole-space problem}\label{sec:endpoint}

\begin{remark}
The upper endpoint in Theorem~\ref{thm:main} is sharp for positive solutions. Also, \eqref{eq:P} has no
solution positive a.e.\ in $\Omega$ when $\lambda\ge\lambda_1(\boldsymbol\mu,\mathbf a)$.
Indeed, let $u>0$ solve \eqref{eq:P} and let $\varphi_1\in X$ be a positive
eigenfunction, which exists by Theorem~\ref{thm:A}. Testing the equation for $u$ with
$\varphi_1$, and the equation for $\varphi_1$ with $u$, and using the symmetry of
$\mathcal B_{\boldsymbol\mu,\mathbf a}$,
\[
\lambda_1\int_\Omega u\varphi_1
=\mathcal B_{\boldsymbol\mu,\mathbf a}(u,\varphi_1)
=\lambda\int_\Omega u\varphi_1+\int_\Omega u^{\cstar-1}\varphi_1 ,
\]
all integrals being finite by Proposition~\ref{prop:form} and H\"older's inequality.
Hence $(\lambda_1-\lambda)\int_\Omega u\varphi_1=\int_\Omega u^{\cstar-1}\varphi_1>0$,
and $\int_\Omega u\varphi_1>0$, so $\lambda<\lambda_1$.
\end{remark}
\begin{remark} The boundedness of $\Omega$ is used in Theorem~\ref{thm:star} through the compact embedding $X\hookrightarrow L^2(\Omega)$. The singularities are treated using the partition of unity introduced in Lemma~\ref{lem:partition}, which relies on the fact that the poles are finitely many and have positive mutual distance. On $\R^N$, however, the constant $S_*$ is no longer the appropriate threshold. Indeed, by dilating a fixed test function so that the relative distances between the poles vanish at the rescaled level, one obtains $$ \inf_{u\in\dHs\setminus\{0\}} \frac{Q_{\boldsymbol\mu,\mathbf a}(u)} {\|u\|^2_{L^{\cstar}(\R^N)}} \leq S_{\sum_i\mu_i} \leq S_*. $$ The second inequality follows from the monotonicity of $\mu\mapsto S_\mu$ and is strict whenever at least two coefficients $\mu_i$ are positive. \end{remark}

\end{document}